\begin{document}
%       Theorem environment
%
%
% NEW types of ``theorems''
%

\theoremstyle{plain}
\swapnumbers
    \newtheorem{theorem}[figure]{Theorem}%[section]
    \newtheorem{proposition}[figure]{Proposition}
    \newtheorem{lemma}[figure]{Lemma}
    \newtheorem{claim}[figure]{Claim}
    \newtheorem{keylemma}[figure]{Key Lemma}
    \newtheorem{corollary}[figure]{Corollary}
    \newtheorem{fact}[figure]{Fact}
    \newtheorem{subsec}[figure]{}
    \newtheorem*{thmb}{Theorem B}
    \newtheorem*{thma}{Theorem A}
    \newtheorem*{thmc}{Theorem C}
    \newtheorem*{thmd}{Theorem D}
\theoremstyle{definition}
    \newtheorem{definition}[figure]{Definition}
    \newtheorem{notation}[figure]{Notation}
     \newtheorem{question}[figure]{Question}
\theoremstyle{remark}
    \newtheorem{remark}[figure]{Remark}
    \newtheorem{example}[figure]{Example}
    \newtheorem{ack}[figure]{Acknowledgements}
\renewcommand{\thefigure}{\arabic{section}.\arabic{figure}}
%
% Define new environments allowing continuous items numbering
%
%   Special diagram/display environments
%
\newenvironment{myeq}[1][]
{\stepcounter{figure}\begin{equation}\tag{\thefigure}{#1}}
{\end{equation}}
\newcommand{\myeqn}[2][]
{\stepcounter{figure}\begin{equation}
     \tag{\thefigure}{#1}\vcenter{#2}\end{equation}}
\newcommand{\mydiag}[2][]{\myeq[#1]\xymatrix{#2}}
\newcommand{\mydiagram}[2][]
{\stepcounter{figure}\begin{equation}
     \tag{\thefigure}{#1}\vcenter{\xymatrix{#2}}\end{equation}}
\newcommand{\myodiag}[2][]
{\stepcounter{figure}\begin{equation}
     \tag{\thefigure}{#1}\vcenter{\xymatrix@C=1pc@R=.5pc{#2}}\end{equation}}
\newcommand{\mypdiag}[2][]
{\stepcounter{figure}\begin{equation}
     \tag{\thefigure}{#1}\vcenter{\xymatrix@C=4.5pc@R=1.8pc{#2}}\end{equation}}
\newcommand{\myqdiag}[2][]
{\stepcounter{figure}\begin{equation}
     \tag{\thefigure}{#1}\vcenter{\xymatrix@C=30pt{#2}}\end{equation}}
\newcommand{\myrdiag}[2][]
{\stepcounter{figure}\begin{equation}
     \tag{\thefigure}{#1}\vcenter{\xymatrix@C=50pt@L=2pt{#2}}\end{equation}}
\newenvironment{mysubsection}[2][]
{\begin{subsec}\begin{upshape}\begin{bfseries}{#2.}
\end{bfseries}{#1}}
{\end{upshape}\end{subsec}}
\newenvironment{mysubsect}[2][]
{\begin{subsec}\begin{upshape}\begin{bfseries}{#2\vsn.}
\end{bfseries}{#1}}
{\end{upshape}\end{subsec}}
\newcommand{\sect}{\setcounter{figure}{0}\section}
%
%
%            More space around math formulas
%
\newcommand{\wh}{\ -- \ }
\newcommand{\wwh}{-- \ }
\newcommand{\w}[2][ ]{\ \ensuremath{#2}{#1}\ }
\newcommand{\ww}[1]{\ \ensuremath{#1}}
\newcommand{\www}[2][ ]{\ensuremath{#2}{#1}\ }
\newcommand{\wwb}[1]{\ \ensuremath{(#1)}-}
\newcommand{\wb}[2][ ]{\ (\ensuremath{#2}){#1}\ }
\newcommand{\wref}[2][ ]{\ (\ref{#2}){#1}\ }
\newcommand{\wwref}[3][ ]{\ (\ref{#2})-(\ref{#3}){#1}\ }
%
%    spacing
%
\newcommand{\hs}{\hspace*{5 mm}}
\newcommand{\hsi}{\hspace*{-16 mm}}
\newcommand{\hsl}{\hspace*{-10 mm}}
\newcommand{\hslm}{\hspace*{-38 mm}}
\newcommand{\hsln}{\hspace*{-29 mm}}
\newcommand{\hslo}{\hspace*{-20 mm}}
\newcommand{\hslp}{\hspace*{-5 mm}}
\newcommand{\hsn}{\hspace*{1 mm}}
\newcommand{\hsm}{\hspace*{2 mm}}
\newcommand{\hsp}{\hspace*{5 mm}}
\newcommand{\hsq}{\hspace*{23 mm}}
\newcommand{\vsn}{\vspace{2 mm}}
\newcommand{\vs}{\vspace{5 mm}}
\newcommand{\vsm}{\vspace{3 mm}}
%
%        Blackboard font
%
\newcommand{\blb}[1]{\mathbb{#1}}
\newcommand{\bbA}{\mathbb{A}}
\newcommand{\bbB}{\mathbb{B}}
\newcommand{\bbC}{\mathbb{C}}
\newcommand{\bbD}{\mathbb{D}}
\newcommand{\bbE}{\mathbb{E}}
\newcommand{\bbF}{\mathbb{F}}
\newcommand{\bbG}{\mathbb{G}}
\newcommand{\bbH}{\mathbb{H}}
\newcommand{\bbN}{\mathbb{N}}
\newcommand{\bbP}{\mathbb{P}}
\newcommand{\bbR}{\mathbb{R}}
\newcommand{\bbS}{\mathbb{S}}
\newcommand{\bbX}{\mathbb{X}}
\newcommand{\bbY}{\mathbb{Y}}
\newcommand{\bbZ}{\mathbb{Z}}
%
%      Bold font
%
\newcommand{\bol}[1]{\bm{#1}}
\newcommand{\boA}{\bm{A}}
\newcommand{\boB}{\bm{B}}
\newcommand{\boC}{\bm{C}}
\newcommand{\boD}{\bm{D}}
\newcommand{\boE}{\bm{E}}
\newcommand{\boF}{\bm{F}}
\newcommand{\boG}{\bm{G}}
\newcommand{\boH}{\bm{H}}
\newcommand{\boN}{\bm{N}}
\newcommand{\boP}{\bm{P}}
\newcommand{\boR}{\bm{R}}
\newcommand{\boS}{\bm{S}}
\newcommand{\boX}{\bm{X}}
\newcommand{\boY}{\bm{Y}}
\newcommand{\boZ}{\bm{Z}}
%
%      Calligraphic
%
\newcommand{\clA}{\mathcal{A}}
\newcommand{\clB}{\mathcal{B}}
\newcommand{\clC}{\mathcal{C}}
\newcommand{\clD}{\mathcal{D}}
\newcommand{\clE}{\mathcal{E}}
\newcommand{\clF}{\mathcal{F}}
\newcommand{\Fs}{\clF\sb{\bullet}}
\newcommand{\clG}{\mathcal{G}}
\newcommand{\clH}{\mathcal{H}}
\newcommand{\clI}{\mathcal{I}}
\newcommand{\clK}{\mathcal{K}}
\newcommand{\ck}{\mathcal{K}}
\newcommand{\clL}{\mathcal{L}}
\newcommand{\clM}{\mathcal{M}}
\newcommand{\clN}{\mathcal{N}}
\newcommand{\clO}{\mathcal{O}}
\newcommand{\clP}{\mathcal{P}}
\newcommand{\clR}{\mathcal{R}}
\newcommand{\clS}{\mathcal{S}}
\newcommand{\clT}{\mathcal{T}}
\newcommand{\ct}{\mathcal{T}}
\newcommand{\clU}{\mathcal{U}}
\newcommand{\clV}{\mathcal{V}}
\newcommand{\clW}{\mathcal{W}}
\newcommand{\clX}{\mathcal{X}}
\newcommand{\clY}{\mathcal{Y}}
\newcommand{\clZ}{\mathcal{Z}}

%%%%%%%%%%%%%%%%% Abbreviations for Greek %%%%%%%%%%%%%%%%
\newcommand{\za}{\alpha}
\newcommand{\zb}{\beta}
\newcommand{\zg}{\gamma}
\newcommand{\zG}{\Gamma}
\newcommand{\zd}{\delta}
\newcommand{\zD}{\Delta}
\newcommand{\ze}{\epsilon}
\newcommand{\zve}{\varepsilon}
\newcommand{\zf}{\phi}
\newcommand{\zl}{\lambda}
\newcommand{\zL}{\Lambda}
\newcommand{\zk}{\kappa}
\newcommand{\zt}{\theta}
\newcommand{\zvt}{\vartheta}
\newcommand{\zT}{\Theta}
\newcommand{\zo}{\omega}
\newcommand{\zO}{\Omega}
\newcommand{\zp}{\phi}
\newcommand{\zvp}{\varphi}
\newcommand{\zP}{\Phi}
\newcommand{\zr}{\rho}
\newcommand{\zs}{\sigma}
\newcommand{\zS}{\Sigma}
\newcommand{\pt}{\partial}
\newcommand{\zgu}[1]{\zg\sp{(#1)}}
\newcommand{\zgb}[1]{\zg\sb{(#1)}}
\newcommand{\elll}[1]{\ell\sb{#1}}
%
%   Operators
%
\newcommand{\ab}{\operatorname{ab}}
\newcommand{\catn}[1]{\operatorname{cat}\sp{#1}}
\newcommand{\Coeq}{\operatorname{Coeq}}
\newcommand{\Dec}{\operatorname{Dec}}
\newcommand{\Ner}{\operatorname{Ner}}
\newcommand{\Alg}{\operatorname{Alg}}
\newcommand{\AQ}{\operatorname{AQ}}
\newcommand{\Hom}{\operatorname{Hom}}
\newcommand{\Tot}{\operatorname{Tot}}
\newcommand{\Map}{\operatorname{Map}}
\newcommand{\map}{\operatorname{map}}
\newcommand{\hd}{\operatorname{hd}}
\newcommand{\Ho}{\operatorname{ho}}
\newcommand{\Id}{\operatorname{Id}}
\newcommand{\inc}{\operatorname{inc}}
\newcommand{\Obj}{\operatorname{Obj}\,}
\newcommand{\op}{\sp{\operatorname{op}}}
\newcommand{\ord}{\operatorname{Or}}
\newcommand{\wg}{\operatorname{wg}}
\newcommand{\Hn}{\clH\sb{n}}
\newcommand{\Disc}{\operatorname{Disc}}
\newcommand{\Diz}{\Disc\sb{0}}
\newcommand{\pr}{\operatorname{pr}}
\newcommand{\Diag}{\operatorname{Diag}}
\newcommand{\diag}{\operatorname{diag}}
\newcommand{\bsim}{/\!\!\sim}
%
% Useful commands and abbreviations
%
\newcommand{\nid}{\noindent}
\newcommand{\ds}{\displaystyle}
\newcommand{\bk}{\bigskip}
\newcommand{\mk}{\medskip}
\newcommand{\smk}{\smallskip}
\newcommand{\ovl}[1]{\overline{#1}}
\newcommand{\ovll}[1]{\overset{=}{#1}}
\newcommand{\up}[1]{\sp{(#1)}}
\newcommand{\lo}[1]{\sb{(#1)}}
\newcommand{\rw}{\rightarrow}
\newcommand{\Rw}{\Rightarrow}
\newcommand{\lw}{\leftarrow}
\newcommand{\Lw}{\Leftarrow}
\newcommand{\xrw}{\xrightarrow} % use as follows: \xrw{} brackets possibly %empty
\newcommand{\xlw}{\xleftarrow} % use as follows: \xlw{} brackets possibly %empty
\newcommand{\hxrw}[1]{\xymatrix{\ \ar@{\sp{(}->}\sp{#1}[r] & \ }}
% Extensible right hookarrow
\newcommand{\tiund}[1]{{\times}\sb{#1}\:}
\newcommand{\pro}[3]{#1\tiund{#2}\overset{#3}{\cdots}\tiund{#2}#1}
\newcommand{\tens}[2]{#1\,\tiund{#2}\,#1}
\newcommand{\uset}[2]{\underset{#1}{#2}}
\newcommand{\oset}[2]{\overset{#1}{#2}}
\newcommand{\mi}{\text{-}}
\newcommand{\nm}{(n-1)}
\newcommand{\bl}{\bullet}
\newcommand{\cop}{\textstyle{\,\coprod\,}}
\newcommand{\seq}[3]{{#1}\sb{#2}...{#1}\sb{#3}}
\newcommand{\seqc}[3]{{#1}\sb{#2},...,{#1}\sb{#3}}
\newcommand{\ssr}{\!\!\!\!\!\!\!\!\!\!\!\!\!\!}
\newcommand{\cirsm}{\scriptstyle{\circ}\textstyle}
\newlength{\myline}% line thickness
\setlength{\myline}{0.7pt}
%
%       symbols used in the paper
%
\newcommand{\dop}[1]{\Delta\sp{{#1}\op}}
\newcommand{\Dop}{\Delta\op}
\newcommand{\Dnop}{\Delta\sp{n\op}}
\newcommand{\Dmenop}{\Delta\sp{{n-1}\op}}
\newcommand{\cat}[1]{\mbox{$\mathsf{Cat\sp{#1}}$}}
\newcommand{\Cat}{\mbox{$\mathsf{Cat}$}}
\newcommand{\cathd}[1]{\Cat\sb{\hd}\sp{#1}}
\newcommand{\Cato}{\mbox{$\mathsf{Cat\sb{\clO}}$}}
\newcommand{\catwg}[1]{\Cat\sb{wg}\sp{#1}}
\newcommand{\Catwg}[1]{\Cat\sb{wg}\sp{#1}}
\newcommand{\GCat}{\mbox{$\mathsf{GCat}$}}
\newcommand{\Gcatwg}[1]{\GCat\sb{\wg}\sp{#1}}
\newcommand{\gcatwg}[1]{\GCat\sb{\wg}\sp{#1}}
\newcommand{\Gpd}{\mbox{$\mathsf{Gpd}$}}
\newcommand{\Gpdn}[1]{\Gpd\sp{#1}}
\newcommand{\Gpdo}{\Gpd\sb{\clO}}
\newcommand{\GC}{\Gpd\,\clC}
\newcommand{\GnC}[1]{\Gpd\sp{#1}\clC}
\newcommand{\Gph}{\mbox{$\mathsf{Gph}$}}
\newcommand{\Gpho}{\Gph\sb{\clO}}
\newcommand{\GGpdo}{2\mi\Gpd\sb{\clO}}
\newcommand{\Gpdwg}[1]{\Gpd\sp{#1}\sb{\wg}}
\newcommand{\Gpdhd}[1]{\Gpd\sp{#1}\sb{\hd}}
\newcommand{\Spl}{\mbox{$\mathsf{Spl}$}}
\newcommand{\Top}{\mbox{$\mathsf{Top}$}}
\newcommand{\dI}{D}
\newcommand{\odI}{D}
\newcommand{\Ne}[1]{N\sb{#1}}
\newcommand{\Nb}[1]{N\sb{(#1)}}
\newcommand{\Nu}[1]{N\sp{(#1)}}
\newcommand{\oN}[1]{N\sb{#1}}
\newcommand{\di}[1]{d\sp{(#1)}}
\newcommand{\dn}{d\sp{(n)}}
\newcommand{\bd}{\bar d}
\newcommand{\Dn}{D\sb{n}}
\newcommand{\Dnm}{D\sb{n-1}}
\newcommand{\tld}{\tilde{d}}
\newcommand{\D}[1]{D\sb{#1}}
\newcommand{\om}[1]{\bar{m}\up{#1}}
\newcommand{\p}[1]{p\up{#1}}
\newcommand{\bp}{\bar p}
\newcommand{\bap}[1]{\bar{p}\up{#1}}
\newcommand{\pn}{p\sp{(n)}}
\newcommand{\qu}[1]{q\sp{(#1)}}
\newcommand{\baq}[1]{\bar{q}\up{#1}}
\newcommand{\qn}{q\sp{(n)}}
\newcommand{\bam}[1]{\bar{m}\up{#1}}
\newcommand{\Tan}{\mbox{$\mathsf{Ta\sp{n}}$}}
\newcommand{\ta}[1]{\mbox{$\mathsf{Ta\sp{#1}}$}}
\newcommand{\gta}[1]{\mbox{$\mathsf{GTa\sp{#1}}$}}
\newcommand{\Trt}{\mbox{$\mathsf{Track\sb{\clO}}$}}
\newcommand{\Trk}[1]{\mbox{$\mathsf{Track\sp{#1}\sb{\clO}}$}}
\newcommand{\Set}{\mbox{$\mathsf{Set}$}}
\newcommand{\St}{St\,}
\newcommand{\st}[1]{St\sp{(#1)}}
\newcommand{\Tr}{Tr\,}
\newcommand{\tr}[1]{Tr\sb{#1}}
\newcommand{\uh}{\underline{h}}
\newcommand{\uk}{\underline{k}}
\newcommand{\uv}{\underline{v}}
\newcommand{\ur}{\underline{r}}
\newcommand{\us}{\underline{s}}
\newcommand{\iov}{\ovl{I}}
\newcommand{\ion}{\ovl{N}}
\newcommand{\ovdia}{\diag\,}
\newcommand{\Ovdia}{\Diag\,}
\newcommand{\rz}{R\sb{0}}
\newcommand{\Qn}{Q\sb{n}}
\newcommand{\Qnm}{Q\sb{n-1}}
\newcommand{\Discn}{Disc\sb{n}}
\newcommand{\Discz}{\Disc\sb{0}}
\newcommand{\funcat}[2]{[\Delta\sp{{#1}\sp{\op}},#2]}
\newcommand{\ps}{\sf{ps}}
\newcommand{\LL}{\mbox{\sf{L}}}
\newcommand{\Lb}[1]{\mbox{$\mathsf{L\sb{(#1)}}$}}
\newcommand{\nfol}{$n$-fold }
\newcommand{\Tm}{\mbox{\sf GTa}\sb{2}}
\newcommand{\Ta}{\mbox{\sf Ta}\sb{2}}
\newcommand{\tms}[2]{\overset{#1}{\times}\sb{#2}}
\newcommand{\btil}{\widetilde{B}}
\newcommand{\orn}[1]{\ord\sb{(#1)}}
\newcommand{\tiq}{\oset{q}{\times}}
\newcommand{\nty}{\mbox{$n$-types}}
\newcommand{\gpd}[1]{\mbox{$\mathsf{Gpd}\sp{#1}$}}
\newcommand{\gpdwg}[1]{\mbox{$\mathsf{Gpd\sb{\wg}\sp{#1}}$}}
\newcommand{\Rbt}[1]{\clR\sb{#1}}
\newcommand{\Lbt}[1]{\clL\sb{#1}}
\newcommand{\Pz}{\Pi\sb{0}}
\newcommand{\nab}{\nabla}
\newcommand{\sbl}{\scriptstyle{\bullet}\textstyle}
\newcommand{\ckb}{\ck\sb{\sbl}}
\newcommand{\HAlg}[1]{H\sb{\Alg}\sp{#1}}
\newcommand{\Sa}{\clS\sb{\ast}}
\newcommand{\hy}[2]{{#1}\text{-}{#2}}
\newcommand{\hpi}{\hat{\pi}\sb{1}}
\newcommand{\iO}[1]{({#1},\!\clO)}
\newcommand{\hC}[1]{\hy{#1}{\Cat}}
\newcommand{\hCo}[1]{\hy{#1}{\Cato}}
\newcommand{\hpC}[1]{\hy{({#1})}{\Cat}}
\newcommand{\hpCo}[1]{\hy{({#1})}{\Cato}}
\newcommand{\iOC}[1]{\hC{\iO{#1}}}
\newcommand{\SO}{\iO{\clS}}
\newcommand{\HAQ}[1]{H\sb{\AQ}\sp{#1}}
\newcommand{\HAQB}[1]{H\sb{\AQ/B}\sp{#1}}
\newcommand{\OC}{\hy{\clO}{\Cat}}
\newcommand{\SOC}{\iOC{\clS}}
\newcommand{\coH}[3]{H\sp{#1}({#2};{#3})}
%
%          Simplicial objects and categories
%
\newcommand{\Gd}{G\sb{\bullet}}
\newcommand{\mX}{\mathscr{X}}
\newcommand{\mY}{\mathscr{Y}}
\newcommand{\mZ}{\mathscr{Z}}
\newcommand{\dd}{\operatorname{d}}
\newcommand{\dpp}[1]{\dd\up{#1}}
\newcommand{\dZ}{\dd\!Z}
%
%        Simplicial and indexing notation
%
\newcommand{\co}[1]{c({#1})}
\newcommand{\csk}[1]{\operatorname{csk}\sb{#1}}
\newcommand{\Po}[1]{P\sp{{#1}}}
\newcommand{\sk}[1]{\operatorname{sk}\sb{#1}}
\newcommand{\EM}[3]{E\sp{#1}({#2},{#3})}
\newcommand{\EB}[2]{E\sb{B}({#1},{#2})}
\newcommand{\EL}[2]{E\sb{\hpi\mX}({#1},{#2})}
\newcommand{\EX}[2]{E\sb{X}({#1},{#2})}
\newcommand{\EQM}[1]{E\sb{\clC}\up{#1}(Q,M)}
%
% Font dimension
\newcommand{\Sc}[1]{\scriptstyle{#1}}
\newcommand{\Scc}[1]{\scriptscriptstyle{#1}}
\newcommand{\Sz}[1]{\scriptsize{#1}}

\newcommand{\arsx}[1]{\ar@<1.5ex>@/\sb{0}.5pc/@{-}[#1]\ar@<1.5ex>@/\sb{0}.55pc/@{-}[#1]} % Draw curve left
\newcommand{\ardx}[1]{\ar@<-2.5ex>@/\sp{0}.5pc/@{-}[#1]\ar@<-2.5ex>@/\sp{0}.55pc/@{-}[#1]}% Draw curve right

\newcommand{\xdownarrow}[1]{%
  {\left\downarrow\vbox to #1{}\right.\kern-\nulldelimiterspace}%
}
\newcommand{\DV}{\oset{\Large\vdots}{\big\downarrow}}

\setcounter{tocdepth}{2}
\DeclareRobustCommand{\VAN}[3]{#2} % set up for citation
\title{A Model for the Andr\'{e}-Quillen Cohomology of an \wwb{\infty,1}Category}

%
%% Group authors per affiliation:

%
\author{David Blanc}
\address{Department of Mathematics\\ University of Haifa\\ 3498838 Haifa\\ Israel}
\email{blanc@math.haifa.ac.il}
\author{Simona Paoli}
\address{Department of Mathematics - School of Computing and Natural Sciences\\
  University of Aberdeen\\ Aberdeen, AB24 3FX, UK}
\email{simona.paoli@abdn.ac.uk}
\date{\today}
\subjclass[2010]{55S45; 18G50, 18B40}
\keywords{$n$-track category, Andr\'{e}-Quillen cohomology, simplicial category}
\begin{abstract}
We describe a comonad on $n$-track categories, for each \w[,]{n\geq 0}
yielding an explicit cosimplicial abelian group model for the Andr\'{e}-Quillen
cohomology of an \wwb{\infty,1}category.
\end{abstract}
\maketitle
\setcounter{section}{-1}

%
%c0     Introduction
%
\sect{Introduction}

A first approximation to understanding topological spaces is provided by their
homotopy category \w[,]{\Ho\Top} where continuous maps are replaced by their homotopy
classes \wh that is, by the set of connected components of the space
\w{\Map(X,Y)} of maps between two topological spaces $X$ and $Y$. However, there is higher
order homotopy information captured only by the full homotopy type of \w{\Map(X,Y)}
itself.  This leads to the study of the category \w{\Top} equipped with its
enrichment in topological spaces.

In order to make this enrichment more approachable, it is useful to have intermediate
levels of complexity between \w{\Ho\Top} and \w[.]{\Top} One natural choice is
to consider successive truncations of each mapping space \w{M=\Map(X,Y)} provided
by its \emph{Postnikov system}: recall that for any topological space $M$, we have
a tower of fibrations
\begin{myeq}[\label{eqpostsystsp}]
M~\to~\dotsc~\to~\Po{n}M~\to~\Po{n-1}M~\to~\dotsc~\to~\Po{1}M~\to~\Po{0}M~,
\end{myeq}
\noindent where \w{\pi\sb{i}\Po{n}M=\pi\sb{i}M} if \w[,]{i\leq n} and zero otherwise:
that is, \w{\Po{n}M} is an $n$-\emph{type}. In particular
\w{\Po{0}\Map(X,Y)\simeq[X,Y]} (a discrete topological space).

It turns out to be more convenient to work with \emph{simplicial categories} \wh that is,
those enriched in simplicial sets, rather than topological spaces (both carry the
same homotopy information). This is because for a Kan complex $M$, \w{\Po{n}M} is
just the \wwb{n+1}coskeleton \w[,]{\csk{n+1}M} and the functor \w{\csk{n+1}} is
strictly monoidal. Thus if $\mX$ is a fibrant simplicial category (i.e., one enriched
in Kan complexes), we have a tower of simplicial categories
\begin{myeq}[\label{eqpostsystcat}]
  \mX~\to~\dotsc~\to~\Po{n}\mX~\to~\Po{n-1}\mX~\to~\dotsc~\to~\Po{1}\mX~\to~\Po{0}\mX~
  \simeq~\Ho\mX~,
\end{myeq}
\noindent obtained by applying \wref{eqpostsystsp} to each mapping space of $\mX$,
where for \w[,]{n\geq 0} \w{\Po{n}\mX} is a simplicial category enriched in $n$-types.

For a topological space (or simplicial set) $M$, \w{\Po{n}M} is obtained from
\w{\Po{n-1}M} as the homotopy fiber of a certain map
\w[,]{k\sb{n-1}:\Po{n-1}M\to E\sb{\hpi M}(\pi\sb{n}M, n+1)} called the \wwb{n-1}st
$k$-\emph{invariant} for $M$, landing in a (twisted) Eilenberg Mac~Lane space representing
the cohomology group \w{H\sp{n+1}(\Po{n-1}M,\pi\sb{n}M)} (with local coefficients).

As Dwyer and Kan showed in \cite{DKanO}, this also holds for any simplicial category
$\mX$, where for any \ww{\hpi\mX}-module $\clD$, such as \w[,]{\clD=\pi\sb{n}\mX}
we have an Eilenberg-Mac~Lane simplicial category \w[,]{\EL{\clD}{n}} representing
what they called the \ww{\SO}-\emph{cohomology} of the simplicial category $\mX$.
Again \w{\Po{n}\mX} is obtained from \w{\Po{n-1}\mX} as the homotopy fiber of a certain
map \w{k\sb{n-1}:\Po{n-1}\mX\to\EL{\pi\sb{n}M}{n+1}} of simplicial categories.
Thus ultimately, all homotopy information about a simplicial category $\mX$ is
encoded inductively by the sequence of ``modules''
\w[,]{\{\pi\sb{n}\mX\}\sb{n=0}\sp{\infty}} together with the $k$-invariants of $\mX$
(thought of as cohomology classes).  Moreover, these $k$-invariants can be used to
extract various higher homotopy invariants of $\mX$ (see \cite{BMeadA}), and in particular
yield an obstruction theory for realizing homotopy-commutative diagram (see \cite{DKanO}).

This version of cohomolgy was later re-interpreted and extended by Harpaz, Nuiten,
and Prasma, who defined the (spectral) \emph{Andr\'{e}-Quillen cohomology} of
various types of (enriched) $\infty$-categories (see \cite{HNPrasA,HNPrasT}).
This generalized also the original Andr\'{e}-Quillen cohomology first defined for algebras
by Andr\'{e} (in \cite{AndrM}) and Quillen (in \cite{QuilH,QuilC}).
In this paper we restrict attention to simplicial categories, which appear to be
the most convenient version of \wwb{\infty,1}categories for our purposes
(see \cite{LurieHTT,BergI}).

Note that \w{\Po{n}} is a homotopical localization functor, and the
$n$-th Eilenberg-Mac~Lane simplicial category \w{\EL{\clD}{n}} is its own $n$-th
Postnikov section. Therefore, there is a canonical equivalence
\begin{myeq}[\label{eqpostnikov}]
\HAQ{n}(\mX;\clD)~:=~[\mX,\ \EL{\clD}{n}]~\cong~[\Po{n}\mX,\ \EL{\clD}{n}]~.
\end{myeq}
\noindent We conclude that in order to study the $n$-th Andr\'{e}-Quillen cohomology
group of a simplicial category, it suffices to look at simplicial categories which
are $n$-truncated \wh in other words enriched in $n$-types.

This has the advantage that we
may use one of the algebraic models for $n$-types to produce an algebraic
replacement for \w[.]{\Po{n}\mX}
The archtypical case is the fundamental groupoid \w{\hpi X} of a
(not necessarily connected) space $X$, which completely models the homotopy type of
\w[.]{\Po{1}X}  More generally, we are looking for a sequence of ``algebraic'' categories
\w[,]{(\clC\sp{n})\sb{n=1}\sp{\infty}} each equipped with a suitable notion of weak
equivalence $\simeq$, such that \w{\clC\sp{n}/\simeq} is equivalent to the homotopy
category of $n$-types of topological spaces.

In the path-connected case, these models include the \ww{\catn{n}}-groups of
\cite{LodaS}, the crossed $n$-cubes of \cite{ESteH} and \cite{TPorN},
the $n$-hyper-crossed complexes of \cite{CCegG}, and the weakly globular
\ww{\catn{n}}-groups of \cite{PaolW}.
However, for our purposes it is essential to have a model for non-connected $n$-types,
since the mapping spaces of a simplicial category are generally not path-connected.
Moreover, this model should have a convenient monoidal structure (preferably cartesian).

Thus we choose to work in the category \w{\Gcatwg{n}} of \emph{groupoidal weakly globular $n$-fold categories}, which is an algebraic model for $n$-types satisfying the required properties (see \cite{PaolS}). 
Furthermore, every $n$-type can be modeled by a \emph{weakly globular $n$-fold groupoid}, that is, an object in the full subcategory \w{\Gpdwg{n}} of \w{\Gcatwg{n}} (see \cite{BP}), which is more convenient algebraically. Thus our model for the Postnikov section \w{\Po{n}\mX} of a simplicial category $\mX$ is an \emph{$n$-track category}: that is, one enriched in \w[.]{\Gpdwg{n}} We use this to prove our main result:

\begin{thma}
There is a comonad $\ck$ on the category \w{\Trk{n}} of $n$-track categories
such that, for any $n$-track category $X$ and module $M$, the 
cosimplicial abelian group \w[,]{\Hom(\ckb,M)} obtained by mapping the simplicial
resolution \w{\ckb X} to $M$, computes the Andr\'{e}-Quillen cohomology of
$X$ with coefficients in $M$.
\end{thma}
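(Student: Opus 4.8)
The plan is to produce the comonad $\ck$ by a standard ``free–forgetful'' construction, and then identify the resulting cohomology with Andr\'{e}-Quillen cohomology via the general comparison theorem for cotriple cohomology. First I would set up the category \w{\Trk{n}} of $n$-track categories as categories enriched in \w{\Gpdwg{n}} over a fixed object set \w[,]{\clO} and exhibit it as algebras over a monad on a more primitive category \wh the natural candidate being \w[,]{\Gpho} the category of \w{\clO}-graphs enriched in weakly globular $n$-fold groupoids (or, going one level deeper, in pointed sets/graphs, if we want the comonad to be genuinely ``small''). The adjunction
\[
F\colon \Gpho \;\rightleftarrows\; \Trk{n} \;\colon\! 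U
\]
with $F$ the free enriched category functor and $U$ the underlying-graph functor then yields the comonad \w{\ck = FU} on \w[,]{\Trk{n}} and for any $n$-track category $X$ the usual bar construction gives the augmented simplicial object \w{\ckb X \to X} with \w[,]{\ck\sb{q}X = (FU)\sp{q+1}X} face maps from the counit \w{\ze\colon FU\to\Id} and degeneracies from the comultiplication \w[.]{\ck\to\ck\sp{2}} Since $U$ creates a section of each component, \w{\ckb X} is \w{U}-split, hence a resolution in the appropriate homotopical sense.

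Next I would pin down the coefficient category: a module $M$ over an $n$-track category $X$ should be an abelian group object in the slice \w[,]{\Trk{n}/X} equivalently (by the usual Beck-module analysis) a certain kind of enriched functor into abelian-group objects of \w[,]{\Gpdwg{n}} and one checks that \w{\Gpdwg{n}} being an algebraic model of $n$-types means these abelian group objects assemble into (shifted, twisted) Eilenberg–Mac~Lane data. With this in hand the cosimplicial abelian group \w{\Hom\sb{\Trk{n}/X}(\ck\sb{\bullet}X,M)} \wh abbreviated \w{\Hom(\ckb,M)} in the statement \wh is defined, and its cohomotopy groups are by definition the \emph{cotriple cohomology} \w{H\sp{\ast}\sb{\ck}(X;M)} of $X$ with coefficients in $M$. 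The core of the proof is then to identify this with \w[,]{\HAQ{\ast}(X;M)} and the strategy is the comparison principle: both theories are computed by resolutions, and any two ``reasonable'' resolutions \wh a cotriple resolution on one side, a cofibrant/cellular resolution computing \w{[\mX,\EL{M}{\ast}]} on the other \wh give the same derived functor. Concretely, I would show the geometric realization (diagonal/$W$-construction) of \w{\ckb X} is a cofibrant replacement of $X$ in the model structure on \w{\Trk{n}} that presents $n$-truncated simplicial categories, using that $F$ lands in cofibrant objects and that $U$-split simplicial objects realize to weak equivalences; then \wref{eqpostnikov} (the Postnikov-invariance of Andr\'{e}-Quillen cohomology) lets us replace \w{\Po{n}\mX} by its algebraic model $X$, and the cotriple cohomology of $X$ computes exactly \w[.]{[X,\EL{M}{\ast}]}

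I expect the main obstacle to be the homotopical bookkeeping relating the \emph{strict} algebraic world (\w[,]{\Trk{n}} where composition in \w{\Gpdwg{n}} is on the nose but the globularity is only ``weak'') with the \emph{homotopy-coherent} world of simplicial categories and their Andr\'{e}-Quillen cohomology. Specifically: (i) one must know that the free functor $F$ produces objects that are genuinely cofibrant, not merely ``flabby,'' which requires a workable model structure on \w{\Trk{n}} transferred along $U$ and the verification that transferred cofibrant objects model free simplicial categories correctly after passing through the weakly globular rigidification of \cite{PaolS,BP}; and (ii) one must match the Beck module category on the $n$-track side with the \ww{\hpi\mX}-module category underlying \w{\EL{\clD}{n}} on the simplicial-category side, i.e.\ check that the equivalence \w{\Gpdwg{n}/\simeq \;\simeq\; \text{$n$-types}} is compatible with abelianization. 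The degeneracy/monoidality remarks in the introduction \wh that \w{\csk{n+1}} is strictly monoidal and that \w{\Gcatwg{n}} has a cartesian monoidal structure \wh are exactly what make $F$ behave, so I would lean on those; but turning ``behaves well enough'' into ``is a cofibrant resolution computing the derived mapping space'' is where the real work lies, and it is presumably carried out by a spectral-sequence or direct acyclic-models argument comparing \w{\Hom(\ckb X,M)} with the cochain complex computing \w[.]{H\sp{\ast}(\Po{n}\mX,\pi\sb{n}\mX)}
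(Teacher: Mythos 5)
Your plan diverges from the paper at two essential points, and both are genuine gaps rather than stylistic choices.

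First, the comonad is not the one you propose. The ``free $\Gpdwg{n}$-enriched category on a $\Gpdwg{n}$-enriched graph'' comonad would produce hom-objects that are coproducts of finite products of the original hom-objects of $X$; these remain arbitrary weakly globular $n$-fold groupoids. What the argument actually requires (Lemma \ref{rem1-longseq}) is that each $\ck Y$ be \emph{homotopically discrete in every groupoid direction} and \emph{free as a category at every multi-simplicial level}, so that $\dI\ck\sp{s}X\to\dd\p{0}\ck\sp{s}X$ is a weak equivalence and $\dI\ck\sp{s}X$ is cofibrant in $\SOC$. The paper achieves this by a tower of adjunctions: the internal arrow functor gives \w{U\sb{[k]}:\Trk{k+1}\to\Trk{k}} with a left adjoint \w{L\sb{[k]}} built from the internal-equivalence-relation construction $H$ of \cite{BP2019}, so that \w{U\sb{n}=U\sb{[n-1]}\cdots U\sb{[0]}} descends all the way to $\Cato$; only then does one invoke the ordinary free-category/forgetful adjunction with $\Gpho$ (graphs of \emph{sets}, not enriched graphs). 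The resulting $\ck=L\sb{n}FVU\sb{n}$ forgets the \emph{entire} higher groupoid structure down to the underlying set-level graph and rebuilds a homotopically discrete fattening of the free category on it, landing back in $\Gpdwg{n}$ by Proposition \ref{pro02}. Your primary candidate does not have this property; your aside about going ``one level deeper'' gestures at the right idea but leaves out the tower of left adjoints $L\sb{[k]}$ which is where the actual work is.

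Second, the identification with Andr\'{e}-Quillen cohomology is not a direct ``comparison of resolutions'' and is not proved by a spectral sequence or acyclic models. The cosimplicial abelian group $\pi\sb{1}\map\sb{\SOC/X}(\odI\ckb X,M)$ computes $\HAQ{s}(\dI X;M)$ directly (Proposition \ref{pro1-longseq}), but that is \emph{not} the same object as $\Hom(\ckb X,M)$, whose cohomotopy defines the \emph{algebraic} cohomology $\HAlg{s}(X;M)$. The two are related only through the short exact sequence of Proposition \ref{pnewses}, which assembles into the long exact sequence of Theorem \ref{the1-longseq}
\begin{equation*}
  \cdots\to\HAQ{s}(\dI X;M)\to\HAQ{s}(\dI X\sb{0};M)\to\HAlg{s}(X;M)\to\HAQ{s-1}(\dI X;M)\to\cdots
\end{equation*}
and one obtains an isomorphism only after replacing $X$ by the construction $S(X)$ of Lemma \ref{lem2-longseq}, which forces $\p{0}(S(X))\sb{0}$ to be a free category so the middle term vanishes, and only with a dimension shift: $\HAQ{s+1}(\dI X;M)\cong\HAlg{s}(S(X);M)$ for $s>1$ (Corollary \ref{cor1-longseq}). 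Your proposal would at best recover Proposition \ref{pro1-longseq}, which concerns a different cosimplicial abelian group than the one in Theorem A, and omits the long exact sequence, the $S(X)$ replacement, and the dimension shift that together constitute the actual content of the theorem.
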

\noindent See Corollary \ref{cor1-longseq} below.

\begin{mysubsection}{Notations and conventions}
\label{snac}
We denote by \w{\Cat} the category of small categories, and by \w{\Set} that of sets.
Let $\Delta$ denote the category of non-empty finite ordered sets and
order-preserving maps. A \emph{simplicial object} \w{\Gd} in a category $\clC$ is
a functor \w[,]{\Dop\to\clC} and the category of simplicial sets will be
denoted by \w[.]{\clS:=\Set\sp{\Dop}} A (small) \emph{simplicial category} may be
thought of as either a simplicial object in \w{\Cat} (with all structure functors
constant on objects), or as a  category enriched in $\clS$.

Given a small category $I$ and a functor $F:\clC\to\clD$, the functor
\begin{equation*}
\overline{F}:[I,\clC]\rw [I,\clD]
\end{equation*}
is obtained by applying $F$ levelwise; that is, for each $X\in [I,\clC]$
\begin{equation*}
  (\overline{F}X)(i)=F(X(i))\;.
\end{equation*}
By slight abuse of notation, in this paper we shall denote $\overline{F}$ simply by $F$.
\end{mysubsection}

\begin{mysubsection}{Organization}
\label{sorg}
Section \ref{prelim} is devoted to background on higher category theory and models for
$n$-types. Section \ref{comores} sets up a comonad on $n$-track categories, yielding
our cochain complex. Section \ref{cmodule} is devoted to the modules needed as
coefficients for the cohomology of an $n$-track category, and Section
\ref{ccohtrk} shows that the Andr\'{e}-Quillen cohomology of an $n$-track category
may be computed using the cochain complex we construct.
\end{mysubsection}

%
%c1 Background
%
\sect{Background}\label{prelim}
In this paper we use the algebraic model of $n$-types given by the category
\w{\Gcatwg{n}} of groupoidal weakly globular $n$-fold categories. This is a
subcategory of the category \w{\Catwg{n}} of weakly globular $n$-fold categories
which was introduced in \cite{PaolS} as a model of weak $n$-categories.
We refer to \cite{PaolS} for the details of the inductive definition of
\w{\Gcatwg{n}} and of $n$-equivalences in it. Here we describe some of main features
of this model, which will be needed later on.

If $\clC$ is a category with pullbacks, there are a well-known categories \w{\Cat\clC}
of internal categories and internal functors and $\Gpd\clC\subset\Cat\clC$ of internal groupoids and internal functors (see for instance \cite{Borceux}).
There is also a nerve functor
$$
N:\Cat\clC\to\funcat{}{\clC}\;.
$$
Starting from \w{\Cat} and iterating this construction yields to the category
\w{\cat{n}} of $n$-fold categories \wh that is,
\w{\cat{1}= \Cat} and \w{\cat{n}=\Cat(\cat{n-1})} for \w[.]{n\geq 2} Similarly we have
the category \w{\Gpdn{n}} of $n$-fold groupoids, where \w{\Gpd\sp{1}=\Gpd} and
\w[.]{\Gpd\sp{n}=\Gpd(\Gpd\sp{n-1})}
Iterating the nerve construction we obtain the multinerve functor
\w[.]{\Nb{n}:\cat{n}\to\funcat{}{\Set}}
This functor is fully faithful. Hence we shall identify \w{\cat{n}} with the
subcategory \w{\Nb{n}\cat{n}} of \w{\funcat{n}{\Set}} which is the essential image
of the functor \w[.]{\Nb{n}} An $n$-fold category $X$ is called \emph{discrete}
when \w{\Nb{n}X} is a constant functor.

The category \w{\Gcatwg{n}} is a full subcategory of \w[;]{\cat{n}} when \w[,]{n=1}
\w{\Gcatwg{1}=\Gpd} is the category of groupoids. The $1$-equivalences in
\w{\Gcatwg{1}} are the equivalences of categories.

We first need to introduce a special class of $n$-fold categories: namely,
homotopically discrete $n$-fold categories.

\begin{definition}\label{def01}
Let \w[.]{\cathd{0}=\Set} Suppose, inductively, we have defined the subcategory
\w{\cathd{n-1}\subset \Cat\sp{n-1}} of
\emph{homotopically discrete \wwb{n-1}fold categories}. We say that the $n$-fold
category \w{X\in \Cat\sp{n}} is \emph{homotopically discrete} if:
\begin{itemize}
\item [a)] $X$ is a levelwise equivalence relation that is for each
\w{(k\sb{1},\ldots,k\sb{n-1})\in\dop{n-1}} and
\w{X\sb{k\sb{1},\ldots,k\sb{n-1}}} is an equivalence relation in \w[.]{\Cat}
\item [b)] \w{\p{n-1}X} is in \w[,]{\cathd{n-1}} where
\w{(\p{n-1}X)\sb{k\sb{1},\ldots,k\sb{n-1}}:=p X\sb{k\sb{1},\ldots,k\sb{n-1}}}
and \w{p\colon\Cat\to\Set} the isomorphism classes of objects functor.
\end{itemize}
When \w[,]{n=1} we denote by \w{\cathd{1}=\cathd{}} the subcategory of \w{\Cat}
consisting of equivalence relations.
\end{definition}

\begin{definition}\label{def02}
Let \w[.]{X\in \cathd{n}} Denote by \w{\zgu{n-1}\sb{X}:X\rw \p{n-1}X} the morphism
given by
$$
(\zgu{n-1}\sb{X})\sb{s\sb{1}...s\sb{n-1}} :X\sb{s\sb{1}...s\sb{n-1}} \rw p X\sb{s\sb{1}
  \dotsc s\sb{n-1}}\;.
$$
\noindent Let
$$
X\sp{d}~:=~\p{0}\p{1}\dotsc\p{n-1}X
$$
where $\p{0}=p$ and let \w{\zg\lo{n}} denote the composite
$$
X\xrw{\zgu{n-1}}\p{n-1}X \xrw{\zgu{n-2}} \p{n-2}\p{n-1}X \rw \cdots
\xrw{\zgu{0}} X\sp{d}\;.
$$
We call \w{\zg\lo{n}} the \emph{discretization map} of $X$.
\end{definition}

\begin{definition}\label{def03}
Given \w[,]{X\in\cathd{n}} for each \w[,]{a,b\in X\sb{0}\sp{d}} denote by
\w{X(a,b)} the fiber at \w{(a,b)} of the map
$$
  X\sb{1} \xrw{(d\sb{0},d\sb{1})} X\sb{0}\times X\sb{0}
  \xrw{\zg\lo{n}\times\zg\lo{n}} X\sb{0}\sp{d}\times X\sb{0}\sp{d}\;.
$$
It can be shown (see \cite{PaolS}) that \w[.]{X(a,b)\in \cathd{n-1}}
This should be thought of as a \ww{\Hom\mi(n-1)}-category.
\end{definition}

\begin{definition}\label{def04}
Define inductively $n$-equivalences in \w[.]{\cathd{n}} For \w[,]{n=1} a $1$-equivalence
is an equivalence of categories. Suppose we have defined \ww{\nm}-equivalences
in \w[.]{\cathd{n-1}} Then a map \w{f:X\rw Y} in \w{\cathd{n}} is
an \emph{$n$-equivalence} if
\begin{itemize}
\item [a)] \w{f(a,b):X(a,b) \rw Y(fa,fb)} is an \wwb{n-1}equivalence for all
  \w[.]{a,b \in X\sb{0}\sp{d}}
\item [b)] \w{\p{n-1}f} is an \wwb{n-1}equivalence.
\end{itemize}
\end{definition}

The next proposition means that homotopically discrete $n$-fold categories are
higher categorical 'fattening' of sets.

\begin{proposition}\label{pro01}
For any \w[,]{X\in\cathd{n}} the maps \w{\zg\up{n-1}:X\rw \p{n-1}X} and
\w{\zg\lo{n}:X\rw X\sp{d}} are $n$-equivalences.
\end{proposition}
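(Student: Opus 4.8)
The plan is to induct on $n$, using the inductive definitions of $\cathd{n}$ and of $n$-equivalences. Since $\zg\lo{n}$ is, by Definition \ref{def02}, the composite $X\xrw{\zgu{n-1}}\p{n-1}X\xrw{\zg\lo{n-1}}X\sp{d}$ (where on the target $\p{n-1}X\in\cathd{n-1}$ the discretization map is the inductively understood one, and $\p{n-1}X$ itself is $(\p{n-1}X)\sp{d}=X\sp{d}$ up to identification), it suffices by the inductive hypothesis and the evident two-out-of-three property for $n$-equivalences to prove the first assertion: that $\zg\up{n-1}:X\rw\p{n-1}X$ is an $n$-equivalence. For the base case $n=1$, $\zg\up{0}:X\to pX$ sends an equivalence relation to its set of classes, and this is an equivalence of categories precisely because in an equivalence relation any two objects in the same class are uniquely isomorphic — so the functor is essentially surjective and fully faithful.

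For the inductive step I would verify the two conditions of Definition \ref{def04} for $f=\zg\up{n-1}_X:X\to\p{n-1}X$. Condition (b) asks that $\p{n-1}(\zg\up{n-1}_X)$ be an $(n-1)$-equivalence; but $\p{n-1}$ applied to $\zg\up{n-1}_X$ is (by the levelwise formula $(\p{n-1}X)_{s_1\ldots s_{n-1}}=pX_{s_1\ldots s_{n-1}}$ and the fact that $p$ is idempotent on equivalence relations) an isomorphism, hence trivially an $(n-1)$-equivalence. Condition (a) asks that for all $a,b\in X_0\sp{d}$ the induced map on Hom-$(n-1)$-categories $\zg\up{n-1}_X(a,b):X(a,b)\rw(\p{n-1}X)(a,b)$ be an $(n-1)$-equivalence. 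Here I would unwind Definition \ref{def03}: $X(a,b)$ is a fiber of $X_1\to X_0\sp{d}\times X_0\sp{d}$, and since $X$ is a levelwise equivalence relation, $X_1$ is (levelwise) a subobject of $X_0\times X_0$, so the fiber $X(a,b)$ is itself a homotopically discrete $(n-1)$-fold category which is a "full sub-equivalence-relation" sitting over the pair $(a,b)$. The key point is that passing from $X$ to $\p{n-1}X$ collapses exactly the $1$-cells, and one checks that $(\p{n-1}X)(a,b)$ is naturally identified with $\p{n-2}(X(a,b))$ (or is discrete), so that $\zg\up{n-1}_X(a,b)$ is identified with the discretization-type map $X(a,b)\to\p{n-2}X(a,b)$, which is an $(n-1)$-equivalence by the inductive hypothesis applied to $X(a,b)\in\cathd{n-1}$.

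The main obstacle I anticipate is purely bookkeeping: matching up the fiber construction of Definition \ref{def03} with the truncation functors $\p{k}$ under the identifications $X_0\sp{d}=(\p{n-1}X)_0\sp{d}$, and checking that $X(a,b)\in\cathd{n-1}$ genuinely has the property that its image under $\p{n-2}$ computes $(\p{n-1}X)(a,b)$ — in other words, that "taking Hom-objects" and "truncating" commute up to the relevant equivalence. This is where one must use that all the $X_{k_1\ldots k_{n-1}}$ are equivalence relations (not merely categories), so that the relevant pullbacks defining the fibers are homotopy pullbacks and are preserved, up to $(n-1)$-equivalence, by $\p{\bullet}$. Once that compatibility is in place, conditions (a) and (b) of Definition \ref{def04} follow immediately, completing the induction and hence the proof for both maps $\zg\up{n-1}$ and $\zg\lo{n}$.
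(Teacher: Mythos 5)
The paper does not actually prove this proposition: it is quoted as background from \cite{PaolS}, so there is no in-text argument to compare yours against. Judged on its own terms, your reconstruction is correct and is the natural one. The base case is right, and in the inductive step condition (b) of Definition \ref{def04} is indeed trivial because $\p{n-1}$ applied to $\zg\up{n-1}\sb{X}$ is the identity on $\p{n-1}X$ (as $p\circ d=\Id$). The one point you flag as ``bookkeeping'' is genuinely the crux, and your instinct about why it works is right but can be sharpened: the identification $(\p{n-1}X)(\zg a,\zg b)\cong\p{n-2}(X(a,b))$ holds \emph{on the nose}, not merely up to equivalence, because $(\p{n-1}X)\sb{1}=\p{n-2}(X\sb{1})$, $((\p{n-1}X)\sb{0})\sp{d}=X\sb{0}\sp{d}$, and $p$ commutes with pullbacks over discrete objects (a fact the paper uses explicitly elsewhere, e.g.\ in the proof of Lemma \ref{lem2-longseq}); under this identification $\zg\up{n-1}\sb{X}(a,b)$ is literally $\zg\up{n-2}\sb{X(a,b)}$, so the inductive hypothesis applies to $X(a,b)\in\cathd{n-1}$. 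There is no need to invoke homotopy pullbacks. The only remaining step you leave implicit for the second map is that an $\!(n-1)$-equivalence between objects of $\cathd{n-1}$, embedded via the discrete inclusion as a map of $n$-fold categories, is an $n$-equivalence, and that $n$-equivalences are closed under composition; both are routine inductions from Definition \ref{def04}, but they should be stated since the factorization of $\zg\lo{n}$ passes through objects of strictly decreasing dimension.
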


The category \w{\Gcatwg{n}\subset\cat{n}} has the following properties:
\begin{itemize}
\item [a)] There are embeddings
\w[.]{\Gcatwg{n}\hookrightarrow \funcat{}{\Gcatwg{n-1}}\hookrightarrow\funcat{n-1}{\Cat}}
\item [b)] There is an \emph{\wwb{n-1}truncation} functor
\w{\p{n-1}:\Gcatwg{n}\to\Gcatwg{n-1}} given by
\w{(\p{n-1}X)\sb{k\sb{1}\cdots k\sb{n-1}}=pX\sb{k\sb{1}\cdots k\sb{n-1}}}
for each \w{(k\sb{1}\cdots k\sb{n-1})\in\Dmenop} and \w[.]{X\in\Gcatwg{n}}
\item [c)] \w[.]{\cathd{n}\subset \Gcatwg{n}}
\item [d)] For each $X$ in \w[,]{\Gcatwg{n}} \w{X\sb{0}} $\in$ \w[.]{\cathd{n-1}}
Hence there is a \wwb{n-1}equivalence \w[,]{\zg:X\sb{0}\rw X\sb{0}\sp{d}}
where \w{X\sb{0}\sp{d}} is a discrete $n$-fold category.\mk
\item [e)] For each \w[,]{k\geq 2} the induced Segal maps
\w{\hat{\mu}\sb{k}:X\sb{k}\to\pro{X\sb{1}}{X\sb{0}\sp{d}}{k}}
are \wwb{n-1}equivalences in \w[,]{\catwg{n-1}} where the maps
\w{\hat{\mu}\sb{k}} arise from the commutativity of the diagram
$$
\xymatrix@C=20pt{
&&&&& X\sb{k} \ar[llld]\sb{\nu\sb{1}} \ar[ld]\sb{\nu\sb{2}} \ar[rrd]\sp{\nu\sb{k}} &&&& \\
&& X\sb{1} \ar[ld]\sb{\zg d\sb{1}} \ar[rd]\sp{\zg d\sb{0}} &&
X\sb{1} \ar[ld]\sb{\zg d\sb{1}} \ar[rd]\sp{\zg d\sb{0}} && \dotsc &
X\sb{1} \ar[ld]\sb{\zg d\sb{1}} \ar[rd]\sp{\zg d\sb{0}} & \\
&X\sb{0}\sp{d} && X\sb{0}\sp{d} && X\sb{0}\sp{d} &\dotsc X\sb{0}\sp{d} && X\sb{0}\sp{d}
}
$$
\item [f)] Given $X\in\Gcatwg{n}$ and $(a,b)\in X\sb{0}\sp{d}\times X\sb{0}\sp{d}$ let
  \w{X(a,b)\subset X\sb{1}} be the fiber at $(a,b)$ of the map
$$
  X\sb{1}\xrw{(d\sb{0},d\sb{1})} X\sb{0}\times X\sb{0} \xrw{\zg\times\zg}
  X\sp{d}\sb{0}\times X\sp{d}\sb{0}\;.
$$

A map \w{f:X\rw Y} in \w{\Gcatwg{n}} is an \emph{$n$-equivalence} if the following
conditions hold:
\begin{itemize}
\item [i)] For all \w[,]{a,b\in X\sb{0}\sp{d}} \w{f(a,b): X(a,b)\rw Y(fa,fb)}
is a \wwb{n-1}equivalence.
\item [ii)] \w{\p{n-1}f} is a \wwb{n-1}equivalence.
\end{itemize}
\end{itemize}

One of the main results of \cite{PaolS} is that \w{\Gcatwg{n}} is an algebraic model
of $n$-types: that is,

\begin{theorem}[\cite{PaolS} Theorem 12.3.11]\label{paols}
There are functors
$$
\xymatrix{
  \gcatwg{n} \ar@/_/[rr]\sb{B {\cirsm }\Discn} &&
  \nty \ar@/_/[ll]\sb{\Qn {\cirsm} \clT\sb{n}}
}
$$
that induce equivalences of categories \w[,]{\gcatwg{n}\bsim\sp{n}\;\simeq\;\Ho(\nty)}
where \w{\gcatwg{n}\bsim\sp{n}} is the localization of \w{\gcatwg{n}} with respect
to $n$-equivalences and \w{\Ho(\nty)} is the homotopy category of $n$-types.
\end{theorem}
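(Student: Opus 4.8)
\noindent The plan is to prove this by induction on $n$, comparing \w{\gcatwg{n}} with the category \w{\gta{n}} of groupoidal Tamsamani weak $n$-categories \wh for which the analogous modelling statement is available from Tamsamani's work \wh and then transporting that equivalence along a rigidification--discretization pair of functors \w{\Qn\colon\gta{n}\to\gcatwg{n}} and \w[.]{\Discn\colon\gcatwg{n}\to\gta{n}} When \w[,]{n=1} all the categories in the statement specialize to \w{\Gpd} and to $1$-types, both \w{\Discn} and \w{\Qn} are identities, and the claim reduces to the classical equivalence \w{\Gpd\bsim\;\simeq\;\Ho(1\text{-types})} realized by the classifying space and fundamental groupoid functors.

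For the inductive step, one regards \w{X\in\gcatwg{n}} as a simplicial object \w{[k]\mapsto X\sb{k}} in \w{\gcatwg{n-1}} via the embedding (a) above. Since \w{X\sb{0}\in\cathd{n-1}} by property (d), the discretization map \w{\zg\colon X\sb{0}\to X\sb{0}\sp{d}} of Definition \ref{def02} is an \wwb{n-1}equivalence by Proposition \ref{pro01}; using it, one defines \w{\Discn X} by replacing \w{X\sb{0}} with the genuinely discrete \w[,]{X\sb{0}\sp{d}} pulling back each \w{X\sb{k}} accordingly, and then applying \w{\Disc\sb{n-1}} levelwise. Weak globularity \wh property (e), namely that the Segal maps \w{\hat{\mu}\sb{k}\colon X\sb{k}\to\pro{X\sb{1}}{X\sb{0}\sp{d}}{k}} are \wwb{n-1}equivalences \wh ensures that the resulting simplicial object satisfies the Segal condition up to \wwb{n-1}equivalence, so that \w{\Discn X} is a groupoidal Tamsamani $n$-category; one then checks from the characterization of $n$-equivalences in (f) that \w{\Discn} both preserves and reflects $n$-equivalences.

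Dually, one would construct, by induction on $n$, the rigidification functor \w[,]{\Qn\colon\gta{n}\to\gcatwg{n}} which replaces the discrete object-of-objects of a Tamsamani object by a homotopically discrete \wwb{n-1}fold category chosen so that, after applying \w{Q\sb{n-1}} levelwise, the Segal maps become strict iterated pullbacks \wh so the output lies in \w{\gcatwg{n}} \wh and show \w{\Qn} preserves $n$-equivalences. The key point is then to produce, again by induction on $n$ and using Proposition \ref{pro01} together with the stability of \wwb{n-1}equivalences under the pullbacks appearing in the Segal maps, natural zig-zags of $n$-equivalences \w{\Discn\Qn\Rw\Id} and \w[,]{\Qn\Discn\Rw\Id} the comparison maps being assembled from the discretization maps \w{\zgu{n-1}} of the homotopically discrete pieces. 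This gives mutually inverse equivalences \w[,]{\gcatwg{n}\bsim\sp{n}\;\simeq\;\gta{n}\bsim\sp{n}} and composing with the already-established equivalence \w{\gta{n}\bsim\sp{n}\;\simeq\;\Ho(\nty)} realized by the classifying space functor $B$ and the Poincar\'{e} $n$-groupoid functor \w{\clT\sb{n}} yields that \w{B\circ\Discn} and \w{\Qn\circ\clT\sb{n}} induce mutually inverse equivalences \w[.]{\gcatwg{n}\bsim\sp{n}\;\simeq\;\Ho(\nty)}

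The step I expect to be the main obstacle is the construction of \w{\Qn} together with the verification of the natural $n$-equivalences \w{\Qn\Discn\simeq\Id} and \w[:]{\Discn\Qn\simeq\Id} since rigidification alters the object-of-objects strictly, the comparison maps are not identities, and proving they are $n$-equivalences demands a delicate induction that keeps track of how the homotopically discrete pieces and their discretization maps interact with the iterated pullbacks defining the Segal conditions in each dimension, invoking Proposition \ref{pro01} at every stage.
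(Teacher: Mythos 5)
The statement you are asked to prove is not proved in this paper at all: it is imported verbatim from the cited reference \cite{PaolS} (Theorem 12.3.11), so there is no ``paper's own proof'' to compare your sketch against. Within that caveat, your outline does capture the broad architecture of the argument in \cite{PaolS}: the comparison runs through Tamsamani's model by means of a discretization functor \w{\Discn} and a rigidification functor \w[,]{\Qn} the base case \w{n=1} is the classical \w[,]{\Gpd/\!\!\sim\;\simeq\Ho(1\text{-types})} and the classifying space $B$ and the Poincar\'e $n$-groupoid \w{\clT\sb{n}} provide the other leg.

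However, some specifics in your sketch do not match what I understand the actual construction to be, and one is potentially substantive. You describe \w{\Discn X} as obtained by ``replacing \w{X\sb{0}} with \w{X\sb{0}\sp{d}} and pulling back each \w{X\sb{k}} accordingly.'' But the discretization map \w{\zg:X\sb{0}\to X\sb{0}\sp{d}} goes \emph{toward} the discrete object, so there is nothing to pull \w{X\sb{k}} back along; a pullback along \w{\zg} would be a base change in the wrong direction and, even if one forced it, would not be a homotopy-invariant replacement. What actually happens in Paoli's framework is closer to the opposite: the strict Segal condition is \emph{relaxed}, with \w{X\sb{k}} sent to the iterated fiber product \w{\pro{X\sb{1}}{X\sb{0}\sp{d}}{k}} (which is already the target of the induced Segal maps of property (e)), and it is the \emph{rigidification} \w{\Qn} that uses a pullback along a discretization map to straighten things out \wh this is exactly the shape of Proposition \ref{pro03}. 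If you are going to reconstruct the proof, you should fix this: put the pullback on the \w{\Qn} side, define \w{\Discn} by replacing \w{X\sb{k}} with the iterated fiber product over the discretization, and then prove, by induction and using Proposition \ref{pro01} to control the homotopically discrete pieces, that both functors preserve $n$-equivalences and that the comparison maps to and from the identity are $n$-equivalences. Your flag that the unit/counit comparison is the delicate step is well-placed; that is indeed where the inductive bookkeeping with \w{\zgu{n-1}} and the Segal pullbacks concentrates.
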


In this paper we use the following
\begin{definition}\label{def05}
The category \w{\Gpdwg{n}} of weakly globular $n$-fold groupoids is the full subcategory
  of \w{\Gcatwg{n}} whose objects are in \w[.]{\Gpd\sp{n}}
\end{definition}

\begin{remark}\label{rem001}
The category of weakly globular $n$-fold groupoids was originally introduced in
\cite[Definition 3.17]{BP}. The definition given there is equivalent to
Definition \ref{def05}, since the geometric weak equivalences in \w{\Gpdwg{n}}
coincide with the $n$-equivalences (\cite[Remark 6.37]{BP}).
\end{remark}
Using the results of \cite{BP}, in Theorem \ref{paols} one can use
an alternative functor
\begin{equation*}
  \Hn:n\text{-types }\to\Gcatwg{n}~,
\end{equation*}
\noindent simpler than the functor \w{Q\sb{n}\circ\clT\sb{n}} of Theorem \ref{paols}.
Furthermore, \w{\Hn} factors through the category \w[.]{\Gpdwg{n}}
We showed in \cite[Theorem 4.32]{BP} that the functors
\begin{equation*}
\Hn:n\text{-types }\to\Gpdwg{n}\hsp\text{and}\hsp B:\Gpdwg{n}\rw n\text{-types }
\end{equation*}
induce functors
\begin{equation*}
  \Ho(n\text{-types})\leftrightarrows \Gpdwg{n}\bsim^n
\end{equation*}
with \w[.]{B\,\Hn\cong\Id}

Given \w{X\in\funcat{n}{\clC}} and \w[,]{1\leq r\leq n} we denote by
\w{X\sp{\{r\}}\in\funcat{n-1}{\clC}} the $n$-fold simplicial object
\begin{equation*}
X\up{r}\sb{\seq{k}{1}{n}}=
\begin{cases}
X\sb{k\sb{2} k_3\ldots k_r k\sb{1} k\sb{r+1}\ldots k\sb{n}},&\text{if}\quad 1\leq r < n\\
    X\sb{k\sb{2} k_3\ldots k\sb{n-1} k\sb{1}}. & \text{if}\quad r=n \;.
\end{cases}
\end{equation*}
The functor sending $X$ to \w{X\up{r}} can be thought of as
`bringing the $r$-th index of $X$ to the fore'. Note that \w[.]{X\up{1}=X}

\begin{remark}\label{rem01}
  For each \w[,]{X\in\funcat{n}{\clC}} \w[,]{(\seqc{k}{1}{n-1})\in\dop{n-1}} and
  \w[:]{s\in\dop{}}
\begin{equation*}
\begin{split}
  (X\sb{s}\up{r})\sb{k\sb{1}}(\seqc{k}{2}{n-1}) =& X\sb{s}\up{r}(\seqc{k}{1}{n-1})~=~
  X\sb{\seq{k}{1}{r}\,s\,\seq{k}{r+1}{n-1}}\\
  =&(X\sb{k\sb{1}})\sb{s}\up{r-1} (\seqc{k}{2}{n-1}).
\end{split}
\end{equation*}
Therefore, \w[.]{(X\sb{s}\up{r})\sb{k\sb{1}}=(X\sb{k\sb{1}})\sb{s}\up{r-1}}
\end{remark}

The following, which will be used in Section \ref{comores}, is a useful criterion for
an object of \w{\cat{n}} to be in \w[:]{\Gcatwg{n}}

\begin{proposition}\label{pro02}
If \w{X\in\cat{n}} satisfies:
\begin{itemize}
\item[(a)] \w{X\sb{k\sb{1} k\sb{2} \ldots k\sb{n-1}*}\in\Gpd} for all
  \w[;]{k\sb{1},k\sb{2},\ldots ,k\sb{n-1}}
\item[(b)] For all \w[,]{1\leq r < n} the \wwb{n-1}fold category
  \w{X\sb{0}\up{r}} is a levelwise equivalence relation;
\item[(c)] \w[.]{\p{n-1} X\in\Gcatwg{n-1}}
\end{itemize}
Then $X$ is in \w[.]{\Gcatwg{n}} Furthermore, if \w{X\in\Gpd\sp{n}} satisfies
(a) and (b) and \w[,]{\p{n-1}X\in\Gpdwg{n-1}} then $X$ is in \w[.]{\Gpdwg{n}}
\end{proposition}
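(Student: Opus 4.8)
The plan is to verify the defining conditions of \w{\Gcatwg{n}} (as recalled in properties (a)--(f) above, and more fully in \cite{PaolS}) by an induction on $n$, feeding in hypotheses (a)--(c) of the statement. The base case \w{n=1} is essentially vacuous: \w{\Gcatwg{1}=\Gpd} and hypothesis (a) says exactly that \w{X\sb{\ast}\in\Gpd} (with the convention that the empty list of $k\sb{i}$'s is allowed), so there is nothing to prove; when \w{X\in\Gpd\sp{1}=\Gpd} the second assertion is equally immediate. So assume \w{n\geq 2} and that the proposition holds for \w[.]{n-1}

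First I would unpack what needs checking. By the embedding in property (a), an object of \w{\cat{n}} lies in \w{\Gcatwg{n}} precisely when: (i) each \w{X\sb{k}} (for \w[,]{k\in\Delta\op}) lies in \w[,]{\Gcatwg{n-1}} viewed through \w[;]{X\in\funcat{}{\cat{n-1}}} (ii) \w{X\sb{0}} lies in \w{\cathd{n-1}} (property (d)); (iii) the Segal maps \w{\hat\mu\sb{k}} of property (e) are \wwb{n-1}equivalences; and (iv) \w{\p{n-1}X\in\Gcatwg{n-1}} — which is hypothesis (c). I would treat these in turn. For (i): fix \w[;]{k\in\Delta\op} I must show \w{X\sb{k}\in\Gcatwg{n-1}} using the inductive hypothesis, so I need to check that \w{X\sb{k}} satisfies conditions (a)--(c) of the proposition at level \w[.]{n-1} Condition (a) for \w{X\sb{k}} is immediate from condition (a) for $X$. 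Condition (b) for \w{X\sb{k}} asks that \w{(X\sb{k})\sb{0}\up{r}} be a levelwise equivalence relation for \w[;]{1\leq r<n-1} here I would invoke Remark \ref{rem01}, which gives \w[,]{(X\sb{0}\up{r+1})\sb{k}=(X\sb{k})\sb{0}\up{r}} the left-hand side being a level of the levelwise equivalence relation \w{X\sb{0}\up{r+1}} (hypothesis (b) for $X$, with index \w[),]{r+1<n} hence itself an equivalence relation. Condition (c) for \w{X\sb{k}} is \w[,]{\p{n-2}(X\sb{k})\in\Gcatwg{n-2}} which is a level of \w[;]{\p{n-1}X\in\Gcatwg{n-1}} this holds since membership in \w{\Gcatwg{n-1}} forces each level to lie in \w{\Gcatwg{n-2}} by property (a). So the inductive hypothesis yields \w[.]{X\sb{k}\in\Gcatwg{n-1}}

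Next, (ii): \w{X\sb{0}\in\cathd{n-1}} is precisely what hypotheses (a) and (b) encode when read for the single \wwb{n-1}fold category \w[.]{X\sb{0}} Indeed Definition \ref{def01}(a) for \w{X\sb{0}} — that every fiber \w{(X\sb{0})\sb{k\sb{1}\cdots k\sb{n-2}}} be an equivalence relation in \w{\Cat} — follows from hypothesis (b) with \w{r=n-1} together with hypothesis (a); and Definition \ref{def01}(b), namely \w[,]{\p{n-2}X\sb{0}\in\cathd{n-2}} follows because \w{\p{n-2}X\sb{0}=(\p{n-1}X)\sb{0}} is the $0$-th level of \w[,]{\p{n-1}X\in\Gcatwg{n-1}} and by property (d) the $0$-th level of anything in \w{\Gcatwg{n-1}} lies in \w[.]{\cathd{n-2}} Then (iii): for the Segal maps, I would argue that \w{\hat\mu\sb{k}} is built from the maps \w{X\sb{1}\to X\sb{0}\sp{d}} and that, after applying \w[,]{\p{n-1}} it becomes the corresponding Segal map of \w[,]{\p{n-1}X} which is an \wwb{n-2}equivalence because \w[;]{\p{n-1}X\in\Gcatwg{n-1}} combined with the fact (Proposition \ref{pro01}) that the discretization \w{X\sb{0}\to X\sb{0}\sp{d}} is an \wwb{n-1}equivalence and a levelwise-groupoid / fibration-type argument using condition (a), one gets that \w{\hat\mu\sb{k}} itself is an \wwb{n-1}equivalence. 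Together with (iv) = hypothesis (c), this places $X$ in \w[.]{\Gcatwg{n}} For the final clause, if moreover \w{X\in\Gpd\sp{n}} and \w[,]{\p{n-1}X\in\Gpdwg{n-1}} then every \w{X\sb{k}\in\Gpd\sp{n-1}} automatically, and rerunning the induction with the \w{\Gpd} versions of the inductive hypothesis (whose extra input, \w[,]{\p{n-2}(X\sb{k})\in\Gpdwg{n-2}} is again a level of \w[)]{\p{n-1}X\in\Gpdwg{n-1}} gives \w[,]{X\sb{k}\in\Gpdwg{n-1}} and hence \w{X\in\Gpdwg{n}} by Definition \ref{def05}.

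I expect the main obstacle to be step (iii), checking that the induced Segal maps \w{\hat\mu\sb{k}} are \wwb{n-1}equivalences: conditions (a)--(c) are ``pointwise/truncation'' conditions, whereas the Segal condition is a genuinely homotopical statement about a limit, so the argument must combine the levelwise groupoid hypothesis (a) (to get that the relevant pullbacks along \w{X\sb{1}\to X\sb{0}\sp{d}} are homotopy pullbacks) with the truncation hypothesis (c) (to control \w[)]{\p{n-1}} and Proposition \ref{pro01} (to pass between \w{X\sb{0}} and \w[).]{X\sb{0}\sp{d}} I would isolate this as a short lemma, proved by induction in parallel with the main statement, and expect it to be where the bulk of the work lies; the remaining verifications (i), (ii), (iv) are bookkeeping with Remark \ref{rem01} and the list of properties (a)--(f).
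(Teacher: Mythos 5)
Your route is essentially the paper's, and the parts you carry out in full match its proof: the induction on $n$, the verification via Remark \ref{rem01} that each level \w{X\sb{s}} satisfies the hypotheses one dimension down and hence lies in \w{\Gcatwg{n-1}} by the inductive hypothesis, and the final clause obtained by combining \w{X\in\Gcatwg{n}} with \w{X\in\Gpd\sp{n}} and Definition \ref{def05}. The one substantive divergence is how the weak globularity of $X$ itself \wh the homotopy discreteness of \w{X\sb{0}} and, above all, the requirement that the induced Segal maps \w{\hat{\mu}\sb{k}} be \wwb{n-1}equivalences \wh gets established. The paper does not prove this at all: it quotes \cite[Proposition 7.2.8]{PaolS}, which says precisely that conditions of the form (a)--(c) force \w[,]{X\in\catwg{n}} after which only the levelwise groupoidal condition of \cite[Definition 12.3.1]{PaolS} remains to be checked (your steps (i) and (iv)); your steps (ii) and (iii) are subsumed in that citation. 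You correctly isolate the Segal-map verification as the crux, but your proposal stops at a sketch of it (a ``levelwise-groupoid / fibration-type argument'' combining (a), (c) and Proposition \ref{pro01}), explicitly deferring the bulk of the work to an unproved auxiliary lemma. As written that is a gap: the sketch points in the right direction \wh condition (a) is what makes the relevant strict pullbacks over \w{X\sb{0}\sp{d}} homotopy-meaningful, and (c) controls the truncation \wh but filling it in is a genuine piece of work, and it is exactly the content of the result the paper cites, so the efficient move here is to invoke \cite[Proposition 7.2.8]{PaolS} rather than reprove it inline.
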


\begin{proof}
By induction on $n$ (trivially true for \w[).]{n=1} Suppose the claim holds
for \w[.]{n-1} By \cite[Proposition 7.2.8]{PaolS}, $X$ is in \w[.]{\catwg{n}}
Since by hypothesis \w{\p{n-1}X} is in \w[,]{\Gcatwg{n-1}} by definition of
\w{\Gcatwg{n}} (see \cite[Definition 12.3.1]{PaolS}) it is enough to show that
\w{X\sb{s}} is in \w{\Gcatwg{n-1}} for each \w[.]{s\geq 0}
But this holds by the induction hypothesis since, by Remark \ref{rem01},
\w{(X\sb{s}\sp{\{r\}})\sb{0}=(X\sb{0})\sb{s}\sp{\{r-1\}}} is a levelwise
equivalence relation (as is \w[),]{X\sb{0}\sp{\{r\}}}
while \w{\p{n-2}X\sb{s}=\p{n-1}X\sb{s}} is in \w[,]{\Gcatwg{n-2}} since
by hypothesis \w{\p{n-1}X} is in \w[.]{\Gcatwg{n-1}}

If \w{X\in\Gpd\sp{n}} satisfies (a) and (b) and \w[,]{\p{n-1}X\in\Gpdwg{n-1}}
then by the above, $X$ is in \w[.]{\Gcatwg{n}}
Since it is also in \w[,]{\Gpd\sp{n}} by definition $X$ is in \w[.]{\Gpdwg{n}}
\end{proof}

\begin{proposition}\label{pro03}
Given \w[,]{X\in\Gcatwg{n}} let \w{r:Z\to\dpp{n-1}\p{n-1}X} be an
\wwb{n-1}equivalence in \w[.]{\Gcatwg{n-1}} Consider the pullback
\myqdiag{
P\ar[r]\sp{w}\ar[d] & X\ar[d]\\
\dpp{n-1}Z\ar[r]\sb{\dpp{n-1}r} & \dpp{n-1}\p{n-1}X
}
\noindent in \w[.]{\funcat{n-1}{\Cat}}
Then \w{P\in\Gcatwg{n}} and $w$ is an $n$-equivalence.
Supposing that in addition \w{X\in\Gpdwg{n}} and \w[,]{Z\in\Gpdwg{n-1}}
then \w[.]{P\in\Gpdwg{n}}
\end{proposition}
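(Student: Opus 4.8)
The plan is to verify that $P$ satisfies the criteria of Proposition \ref{pro02}, and then separately check that $w$ is an $n$-equivalence using the characterization in part (f) of the list of properties of $\Gcatwg{n}$. The key observation is that pullbacks in $\funcat{n-1}{\Cat}$ are computed levelwise, so each $P_{k_1 \cdots k_{n-1} *}$ is a pullback of groupoids over a discrete (hence constant) diagram in the relevant index, which again lies in $\Gpd$; this gives condition (a). For condition (b), note that $\dpp{n-1}Z$ has all its structure in the $(n-1)$st simplicial direction constant, so $P_0^{\{r\}}$ for $1 \le r < n$ is a pullback $X_0^{\{r\}} \times_{(\dpp{n-1}\p{n-1}X)_0^{\{r\}}} (\dpp{n-1}Z)_0^{\{r\}}$; since $X \in \Gcatwg{n}$ we know $X_0^{\{r\}}$ is a levelwise equivalence relation, and a pullback of an equivalence relation along a map of equivalence relations (the target being $(\p{n-1}X)_0^{\{r\}}$, an equivalence relation since $\p{n-1}X \in \Gcatwg{n-1}$) is again an equivalence relation, so $P_0^{\{r\}}$ is a levelwise equivalence relation.

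For condition (c), I would compute $\p{n-1}P$. Applying $\p{n-1}$ (which is the isomorphism-classes-of-objects functor levelwise) to the pullback square: since $\dpp{n-1}r$ and $\dpp{n-1}\p{n-1}X$ involve the functor $\p{n-1}$ applied in a way compatible with taking nerves in the $n$th direction, and since $Z$ and $\p{n-1}X$ are already truncated in the appropriate sense, one gets that $\p{n-1}P$ is the pullback of $\p{n-1}X \to \p{n-1}X$ (identity-like) against $Z \to \p{n-1}X$, so $\p{n-1}P \cong Z$ up to the relevant identification — in any case $\p{n-1}P$ lies in $\Gcatwg{n-1}$ because $Z$ does and $\Gcatwg{n-1}$ is closed under the relevant pullbacks (here one uses that $r$ is an $(n-1)$-equivalence with target in $\Gcatwg{n-1}$, or more simply that $\p{n-1}P$ is built from $Z \in \Gcatwg{n-1}$). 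Proposition \ref{pro02} then yields $P \in \Gcatwg{n}$.

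To show $w$ is an $n$-equivalence, I would check conditions (i) and (ii) of part (f). Condition (ii): $\p{n-1}w: \p{n-1}P \to \p{n-1}X$. Under the identification $\p{n-1}P \simeq Z$ and using that $r$ is an $(n-1)$-equivalence $Z \to \p{n-1}X$, the map $\p{n-1}w$ is (equivalent to) $r$, hence an $(n-1)$-equivalence. Condition (i): for $a,b \in P_0^d$, I need $w(a,b): P(a,b) \to X(w a, w b)$ to be an $(n-1)$-equivalence. Since the bottom map $\dpp{n-1}r$ has source and target with discrete $n$th-direction structure, taking the fiber $P(a,b)$ over a pair of points and comparing with $X(wa,wb)$ shows $w(a,b)$ is (isomorphic to) the pullback of $X(wa,wb) \to (\dpp{n-1}\p{n-1}X)(\cdot,\cdot)$ against $Z(\cdot,\cdot) \to (\p{n-1}X)(\cdot,\cdot)$, and since $r$ is an $(n-1)$-equivalence this pullback is an $(n-1)$-equivalence by the inductive pullback-stability of $(n-1)$-equivalences along such maps (cf.\ Proposition \ref{pro03} for $n-1$, i.e.\ the inductive hypothesis). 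The groupoidal addendum is immediate: if $X \in \Gpdwg{n}$ and $Z \in \Gpdwg{n-1}$, then all the objects in the pullback square lie in $\Gpd\sp{n}$-enriched contexts, pullbacks of internal groupoids are internal groupoids, so $P \in \Gpd\sp{n}$, and combined with $P \in \Gcatwg{n}$ this gives $P \in \Gpdwg{n}$ by Definition \ref{def05}.

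The main obstacle I anticipate is the bookkeeping in condition (c) and in condition (ii): making precise the identification of $\p{n-1}P$ and checking that applying $\p{n-1}$ to the pullback square behaves well requires knowing that $\p{n-1}$ commutes (or interacts predictably) with the pullback along $\dpp{n-1}r$ — this is plausible because the $n$th direction of $\dpp{n-1}Z$ and $\dpp{n-1}\p{n-1}X$ is ``constant,'' so $\p{n-1}$ acts trivially there, but one must verify that $\p{n-1}$ of a levelwise pullback of categories is the levelwise pullback of the $\p{n-1}$'s in this situation (not true for arbitrary pullbacks of categories, but true here because one leg is, levelwise in the $n$th direction, discrete). I would isolate this as the one nontrivial lemma and handle the rest by the levelwise/inductive bookkeeping sketched above.
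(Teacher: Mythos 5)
Your approach is genuinely different from the paper's. The paper's proof is a short citation argument: since $X\in\Gcatwg{n}$ implies $\p{n-1}X=q^{(n-1)}X$, the authors invoke \cite[Theorem~9.2.4 and Corollary~9.2.5]{PaolS} to get $P\in\catwg{n}$ and $w$ an $n$-equivalence, then \cite[Lemma~12.3.4]{PaolS} to pass from $\catwg{n}$ to $\Gcatwg{n}$, and finally (as you also do) the trivial observation that $P\in\Gpd^n\cap\Gcatwg{n}=\Gpdwg{n}$. Your proposal instead reproves the result from scratch by checking Proposition~\ref{pro02} for $P$ and then verifying conditions (i)--(ii) of property (f) for $w$; this is roughly the content of the cited results of \cite{PaolS}, so you are effectively re-deriving the black box. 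That is a legitimate and more self-contained route, and your handling of conditions (a), (b) and (c) of Proposition~\ref{pro02} (including the correct observation that $\p{n-1}P\cong Z$ because $p$ commutes with pulling back along a discrete map) and of condition (ii) of (f) is sound in outline.

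The genuine gap is in your treatment of condition (i) of (f). You assert that $w(a,b)\colon P(a,b)\to X(wa,wb)$ is again a pullback of the same shape and then appeal to ``Proposition~\ref{pro03} for $n-1$'' as an inductive hypothesis. But the proposition as stated is not formulated as an induction, and to make your appeal legitimate you would need to (1) explicitly recast the whole proof as an induction on $n$ with a base case ($n=1$, where $\Gcatwg{1}=\Gpd$ and the claim is the elementary statement that pulling back a groupoid along a bijection-on-$\pi_0$ map to its set of components gives an equivalent groupoid), and (2) actually verify that the fiber square has the required form: namely that $X(wa,wb)\in\Gcatwg{n-1}$, that the bottom leg is $\dpp{n-2}$ applied to a map $Z(a,b)\to(\p{n-1}X)(ra,rb)$ which is an $(n-2)$-equivalence (this follows from $r$ being an $(n-1)$-equivalence), and crucially that $(\p{n-1}X)(ra,rb)$ is naturally identified with $\p{n-2}\bigl(X(wa,wb)\bigr)$. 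This last identification is the exact point where $p$ must commute with fibers over discrete objects in the hom-direction, and it is not obviously the same as the ``one nontrivial lemma'' you flag for condition (c); it deserves its own verification. Until those two points are filled in, the proof of condition (i) is a plan rather than an argument, whereas the rest of the proposal is essentially complete modulo routine bookkeeping.
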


\begin{proof}
Since \w[,]{X\in\Gcatwg{n}} \w[,]{\p{n-1}X=q\up{n-1}X} so we can apply
\cite[Theorem 9.2.4]{PaolS} and \cite[Corollary 9.2.5]{PaolS} to conclude that
\w{P\in\catwg{n}} and $w$ is an $n$-equivalence. By \cite[Lemma 12.3.4]{PaolS},
it follows that \w[.]{P\in\Gcatwg{n}}
If \w{X\in\Gpdwg{n}} and \w[,]{Z\in\Gpdwg{n-1}} it follows that \w[.]{P\in\Gpd\sp{n}}
Since also \w[,]{P\in\Gcatwg{n}} by definition \w[.]{P\in\Gpdwg{n}}
\end{proof}

%
%c2   Comonad resolution of $n$-track categories
%
\sect{Comonad resolution of $n$-track categories}
\label{comores}

We now introduce our model for categories enriched in $n$-types.
Although we could use the category $\Gcatwg{n}$ for this purpose, we choose to
work with the smaller category \w{\Gpdwg{n}} and thus define our models (called $n$-track
categories) to be categories enriched in \w[.]{\Gpdwg{n}}
This choice is justified by the fact (see \cite[Theorem 4.32]{BP}) that any $n$-type
is weakly equivalent to the classifying space of a weakly globular $n$-fold groupoid.

The advantage of \w{\Gpdwg{n}} over \w{\Gcatwg{n}} is that it allows us to build
a comonad on $n$-track categories in \S \ref{scntc} below.
\begin{definition}\label{def1-comores}
An \emph{$n$-track category} is a category enriched in the category \w{\Gpdwg{n}} of
weakly globular $n$-fold groupoids (with respect to the cartesian monoidal structure),
and the category \w{\Trk{n}=\hCo{\Gpdwg{n}}} is the category of all (small)
$n$-track categories.
\end{definition}

When \w[,]{n=1} \w{\Trk{1}=\Trt} is the category of track categories,
as in \cite{BP2019}. There is an isomorphism
\begin{myeq}[\label{eq1-comores}]
  \hCo{(\Gpdn{n},\times)}\cong \Gpdn{n}(\Cato)
\end{myeq}
The inclusion \w{\Gpdwg{n}\hookrightarrow \cat{n}} induces an inclusion
$$
  \Trk{n}=\hCo{(\Gpdwg{n},\times)}\hookrightarrow \hpCo{\Gpdn{n},\times}~.
$$

Composing this with the isomorphism \wref{eq1-comores} yields a fully faithful functor
$$
  \Trk{n}\hookrightarrow \Gpdn{n}(\Cato).
$$
We denote the essential image of this functor by \w[.]{\Gpdwg{n}(\Cato)} We can
therefore identify \w{\Trk{n}} with \w[.]{\Gpdwg{n}(\Cato)} This identification
  will be useful in the next section when defining a comonad on \w[.]{\Trk{n}}

\begin{mysubsection}{Comonad on $n$-track categories}
\label{scntc}
By definition of \w[,]{\Gpdwg{n}} there is an inclusion
\w[,]{\Gpdwg{n}\hookrightarrow \Gpd(\Gpdwg{n-1})} and thus there is an arrow
functor \w[,]{u\sb{n-1}:\Gpdwg{n}\to\Gpdwg{n-1}} which in turn induces a functor
\begin{myeq}[\label{eq2-comores}]
U\sb{[n-1]}:\Trk{n}\to\Trk{n-1}\;.
\end{myeq}
\end{mysubsection}

\begin{lemma}\label{lem1-comores}
The functor \w{U\sb{[n-1]}} as above has a left adjoint
$$
L\sb{[n-1]}:\Trk{n-1}\to\Trk{n}~.
$$
\end{lemma}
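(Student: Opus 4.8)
The plan is to build $L\sb{[n-1]}$ by first constructing a left adjoint $\ell\sb{n-1}$ to the arrow functor $u\sb{n-1}:\Gpdwg{n}\to\Gpdwg{n-1}$ on the underlying hom-objects, and then to show that this construction is compatible with the cartesian monoidal structure so that it extends to enriched categories with a fixed object set $\clO$. Recall that the inclusion $\Gpdwg{n}\hookrightarrow\Gpd(\Gpdwg{n-1})$ presents a weakly globular $n$-fold groupoid as an internal groupoid in $\Gpdwg{n-1}$, and $u\sb{n-1}$ is the ``arrow object'' functor $X\mapsto X\sb{1}$. First I would exhibit a candidate left adjoint: given $W\in\Gpdwg{n-1}$, form the internal groupoid in $\Gpdwg{n-1}$ with object-of-objects a discrete $(n-1)$-fold groupoid on the set of components of $W$ (or, more simply, the trivial equivalence relation on $W$ viewed internally), so that $\ell\sb{n-1}(W)$ has $\ell\sb{n-1}(W)\sb{1}\cong W$ together with the two projections and an identity-on-$W$ composition structure; concretely one takes the nerve to be the constant-in-higher-direction simplicial object built from $W \rightrightarrows W\sp{d}$ with the codiscrete structure, which is the internal analogue of the ``indiscrete groupoid'' left adjoint to the arrow functor on ordinary groupoids. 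One must check $\ell\sb{n-1}(W)$ actually lies in $\Gpdwg{n}$: this is where Proposition \ref{pro02} does the work, since conditions (a)--(b) there are immediate for a codiscrete-in-one-direction object, and condition $\p{n-1}\ell\sb{n-1}(W)\in\Gpdwg{n-1}$ reduces to $\p{n-2}W$ being of the required form, which holds because $W\in\Gpdwg{n-1}$.

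Next I would verify the adjunction $\Hom\sb{\Gpdwg{n}}(\ell\sb{n-1}(W),X)\cong\Hom\sb{\Gpdwg{n-1}}(W,u\sb{n-1}X)=\Hom\sb{\Gpdwg{n-1}}(W,X\sb{1})$: a map out of $\ell\sb{n-1}(W)$ is, by codiscreteness of the object-of-objects side, determined freely by its effect on the arrow object $W$, with no further constraints to satisfy, so the bijection is formal. The only subtlety is naturality in $X$, which is routine. At this point $\ell\sb{n-1}\dashv u\sb{n-1}$ as functors $\Gpdwg{n-1}\leftrightarrows\Gpdwg{n}$.

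The key structural step is then to promote this to the enriched setting. Using the identification $\Trk{n}\cong\Gpdwg{n}(\Cato)$ recorded just before \S\ref{scntc}, an $n$-track category with object set $\clO$ is the same data as an internal-category-object in $\Gpdwg{n}$ over the discrete object $\clO$; the functor $U\sb{[n-1]}$ is applying $u\sb{n-1}$ to each hom-object. To lift $\ell\sb{n-1}$ one needs it to be (lax) monoidal for the cartesian products, i.e.\ to have natural maps $\ell\sb{n-1}(W)\times\ell\sb{n-1}(W')\to\ell\sb{n-1}(W\times W')$ and $1\to\ell\sb{n-1}(1)$ compatible with composition; since $u\sb{n-1}$ is product-preserving (the arrow object of a product is the product of arrow objects), its left adjoint is automatically \emph{op}lax monoidal, and because the object-of-objects part of $\ell\sb{n-1}$ is the codiscrete construction — which is itself product-preserving — one checks directly that $\ell\sb{n-1}$ is in fact strong monoidal. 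A strong monoidal left adjoint between the enriching categories induces, by the standard change-of-enrichment machinery applied fibrewise over the fixed object set $\clO$, a left adjoint on the categories of enriched categories; this yields $L\sb{[n-1]}:\Trk{n-1}\to\Trk{n}$ with $L\sb{[n-1]}\dashv U\sb{[n-1]}$. One must finally confirm that $L\sb{[n-1]}$ lands in $\Trk{n}$ and not merely in $\hCo{\Gpd(\Gpdwg{n-1})}$, i.e.\ that each hom-object $\ell\sb{n-1}(\text{hom})$ is weakly globular; but that is exactly the Proposition \ref{pro02} verification already carried out for a single $W$, applied hom-object by hom-object.

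\textbf{Main obstacle.} I expect the delicate point to be the monoidal compatibility of $\ell\sb{n-1}$: while $u\sb{n-1}$ visibly preserves products and hence its left adjoint is oplax monoidal, one genuinely needs \emph{strong} monoidality (or at least enough structure to transport composition laws) to build the enriched left adjoint, and this forces a careful choice of $\ell\sb{n-1}$ — using the codiscrete/indiscrete internal groupoid rather than, say, a free-category construction — together with a check that the comparison maps are isomorphisms. A secondary technical nuisance is keeping the object set $\clO$ fixed throughout (the functors $U\sb{[n-1]}$ and $L\sb{[n-1]}$ are the identity on objects), so that the adjunction is really an adjunction of $\clO$-indexed families of enriched categories; this is handled by working inside $\hCo{-}$ with $\clO$ frozen, but it must be stated carefully to make the change-of-base argument apply.
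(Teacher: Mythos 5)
There are two genuine gaps here, both in places you flagged as delicate.

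\textbf{The candidate left adjoint is the wrong object.} The left adjoint to the arrow functor $u\sb{n-1}:\Gpd(\Gpdwg{n-1})\to\Gpdwg{n-1}$ is \emph{not} an internal groupoid with object-of-objects the discretization (or set of components) of $W$ and arrow object $W$; that construction is essentially the discrete internal groupoid on $W$, which is left adjoint to the \emph{objects} functor, not the arrows functor. Moreover, for your candidate the claim that a map out of it is ``determined freely by its effect on $W$ with no further constraints'' fails: an internal functor would force the source and target of $W\to X\sb{1}$ to factor through a discrete object, which is a nontrivial condition. The correct left adjoint (the one the paper takes from \cite[\S 2.2]{BP2019}) is $\elll{n-1}W=H(W\sb{s}\cop W\sb{t}\rightleftarrows W)$: its object-of-objects is \emph{two} copies of $W$, and its arrow object is the kernel pair of the fold map (four copies of $W$) \wwh internally, this is ``$W$ tensored with the free-living isomorphism''. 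A map out of \emph{this} object is indeed freely determined by a single map $W\to X\sb{1}$. Your appeal to Proposition \ref{pro02} to land in $\Gpdwg{n}$ is the right move and matches the paper, but the object it must be applied to is $H(W\cop W\rightleftarrows W)$, for which levelwise-equivalence-relation is clear and $\p{n-1}\elll{n-1}W=W$ (not a condition on $\p{n-2}W$).

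\textbf{The strong-monoidality route does not go through.} Once the left adjoint is corrected, it is only \emph{oplax} monoidal, never strong: $\elll{n-1}(W\times W')$ has object-of-objects $(W\times W')\cop(W\times W')$, whereas $\elll{n-1}(W)\times\elll{n-1}(W')$ has object-of-objects $(W\cop W)\times(W'\cop W')\cong (W\times W')\sp{\cop 4}$, so the comparison map is not an isomorphism. Hence the change-of-enrichment machinery you invoke is unavailable. The paper avoids this entirely: via the identification $\Trk{n}\cong\Gpdwg{n}(\Cato)$ it works \emph{internally} to $\Cato$ rather than by change of base, so that $U\sb{[n-1]}$ is the internal arrow functor for $\Gpd(\Trk{n-1})\supset\Trk{n}$ and its left adjoint is given by literally the same formula $Z\mapsto H(Z\cop Z\rightleftarrows Z)$, with the coproduct and kernel pair now computed in $\Cato$. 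No monoidality of $\elll{n-1}$ is needed on that route; the only thing to verify is again Proposition \ref{pro02} applied to $H(Z\cop Z\rightleftarrows Z)$.
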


\begin{proof}
Let \w{u\sb{n-1}} denote the restriction to \w{\Gpdwg{n}} of the internal arrow functor
$$
u\sb{n-1}:\Gpd(\Gpdwg{n-1})\to\Gpdwg{n-1}\;.
$$
By \cite[\S 2.2]{BP2019},  this has a left adjoint
$$
\elll{n-1}:\Gpdwg{n-1}\rw \Gpd(\Gpdwg{n-1})~,
$$
given, for each \w[,]{X\in\Gpdwg{n-1}} by
$$
\elll{n-1}X=H(X\sb{s}\cop X_t \rightleftarrows X)~.
$$

Here $H$ is the internal equivalence relation corresponding to the map $f$
(see \cite{BP2019} for details). We claim that \w{\elll{n-1}X} is in \w[,]{\Gpdwg{n}}
since it satisfies the hypotheses of Proposition \ref{pro02}:
it is a levelwise equivalence relation, so both hypotheses a) and b) hold.
Further, by definition of $H$, \w[,]{\p{n-1}\elll{n-1}X=X\in\Gpdwg{n-1}}
so that hypothesis c) also holds. Also \w[,]{\elll{n-1}X\in\Gpdn{n}}
hence by the Proposition \ref{pro02} \w[.]{\elll{n-1}X\in\Gpdwg{n}}
\end{proof}

By repeated application of Lemma \ref{lem1-comores} we obtain adjunctions
\begin{myeq}[\label{eq3-comores}]
U\sb{n}=U\sb{[n-1]}\cdots U\sb{[1]} U\sb{[0]}:\xymatrix{\Trk{n}\ar@<-1ex>[r] &
    \Cato \ar@<-1ex>[l]}:L\sb{n}=L\sb{[0]}L\sb{[1]}\cdots L\sb{[n-1]}\;.
\end{myeq}
On the other hand, if \w{\Gpho} is the category of graphs with object set $\clO$ and
maps which are identity on $\clO$, there is a free-forgetful functor adjunction
\begin{myeq}[\label{eq4-comores}]
V:\Cato \leftrightarrows \Gpho : F ~.
\end{myeq}
Composing the functors \wref{eq3-comores} and \wref[,]{eq4-comores} we obtain
the adjunction
\begin{myeq}[\label{eq5-comores}]
  L\sb{n} F:\Trk{n} \leftrightarrows \Gpho : V U\sb{n}
\end{myeq}
We therefore have a comonad \w{(\ck,\zve,\zd)} given by
\begin{myeq}[\label{eq6-comores}]
  \ck=L\sb{n} FVU\sb{n}:\Trk{n}\to\Trk{n}~,
\end{myeq}
with $\zve$ is the counit of the adjunction \wref[,]{eq5-comores}
\w[,]{\zd=L\sb{n}F(\eta)VU\sb{n}} and $\eta$ the unit of the adjunction
\wref[.]{eq5-comores}

\begin{mysubsection}{Properties of the comonad resolution}
\label{procomres}
Using the comonad $\ck$ as in \eqref{eq6-comores}, for each \w{X\in\Trk{n}}
we obtain a simplicial object \w{\ckb X\in[\Dop,\Trk{n}]} with
\w[,]{\ck\sb{n} X=\ck\sp{n+1}X} and face and degeneracy maps given by
\w{\pt\sb{i}=\ck^i\zve \ck\sp{n-i}:\ck\sp{n+1}X\rw \ck\sp{n} X} and
\w[.]{\zs^i=\ck^i\zd \ck\sp{n-i}:\ck\sp{n+1}X\rw \ck\sp{n+2}X}
The simplicial object $\ckb X$ is augmented over $X$ via
\w[,]{\zve:\ckb X\rw X} and \w{\ckb X} is a simplicial resolution of $X$
(see \cite[\S 8.6]{WeibH}).

The $n$-fold nerve functor \w{\Gpdwg{n}\to\funcat{n}\Set} induces a functor
\w[,]{\Ne{n}:\Trk{n}\to\funcat{n}{\Cato}} and therefore a functor
$$
\oN{n}:\funcat{}{\Trk{n}}\to\funcat{n+1}{\Cato}
$$
\noindent obtained by applying\w{\Ne{n}} in each simplicial dimension (see notational convention \ref{snac}).

The nerve functor \w{N:\Cato\to\funcat{}{\Set}} also induces a functor
\w[.]{\oN{}:\funcat{n+1}{\Cato}\to\funcat{n+2}{\Set}}
By composition, we therefore obtain the functor
$$
\oN{}\,\oN{n}:\funcat{}{\Trk{n}}\to\funcat{n+2}{\Set}\;.
$$

Given \w[,]{X\in\Trk{n}}
\w{\oN{}\,\oN{n}\ckb X\in\funcat{n+2}{\Set}} has a
simplicial ``resolution'' direction given by the comonad resolution ,
a simplicial ``category'' direction given by the nerve of \w[,]{\Cato}
and $n$ simplicial ``groupoid'' directions given by the nerves of the groupoids in
\w[.]{\Gpdwg{n}}
By a slight abuse, we sometimes identify
\w{\oN{}\,\oN{n}\ckb X\in\funcat{n+2}{\Set}} with
\w[.]{\oN{n}\ckb X\in\funcat{n+1}{\Cato}}

Composing \w{\Ne{n}} with the diagonal functor we obtain the functor
\begin{equation*}
\dI=\diag\, \Ne{n}:\Trk{n}\to\funcat{}{\Cato}=\SOC\;.
\end{equation*}
This functor collapses all groupoid directions in an $n$-track category
into a single simplicial direction. Applying $\dI$ levelwise we therefore obtain
\begin{equation*}
  \odI:\funcat{}{\Trk{n}}\rw[\Dop,\funcat{}{\Cato}]\cong \funcat{2}{\Cato}\;.
\end{equation*}
Composing $\odI$ with the diagonal functor
\begin{equation*}
  \diag:\funcat{2}{\Cato}\to\funcat{}{\Cato}=\SOC\;,
\end{equation*}
yields the functor
\begin{myeq}\label{eq1-procomres}
  \Ovdia=\diag\circ\odI:\funcat{}{\Trk{n}}\rw \SOC\;.
\end{myeq}
Given \w[,]{X\in\Trk{n}} let
\begin{equation*}
W=\oN{}\,\odI\ck\sb{\sbl}X\in\funcat{3}{\Set}\;.
\end{equation*}
Below is a picture of the corner of $W$; in this picture the horizontal simplicial
direction is given by the comonad resolution, the vertical simplicial direction
is given by the diagonal of the multinerve of the weakly globular $n$-fold groupoid
in each track category and the diagonal simplicial direction is given by the nerve
of the category in each track category.
\begin{myeq}\label{eq2-procomres}
\begin{gathered}
  \includegraphics[width=10cm]{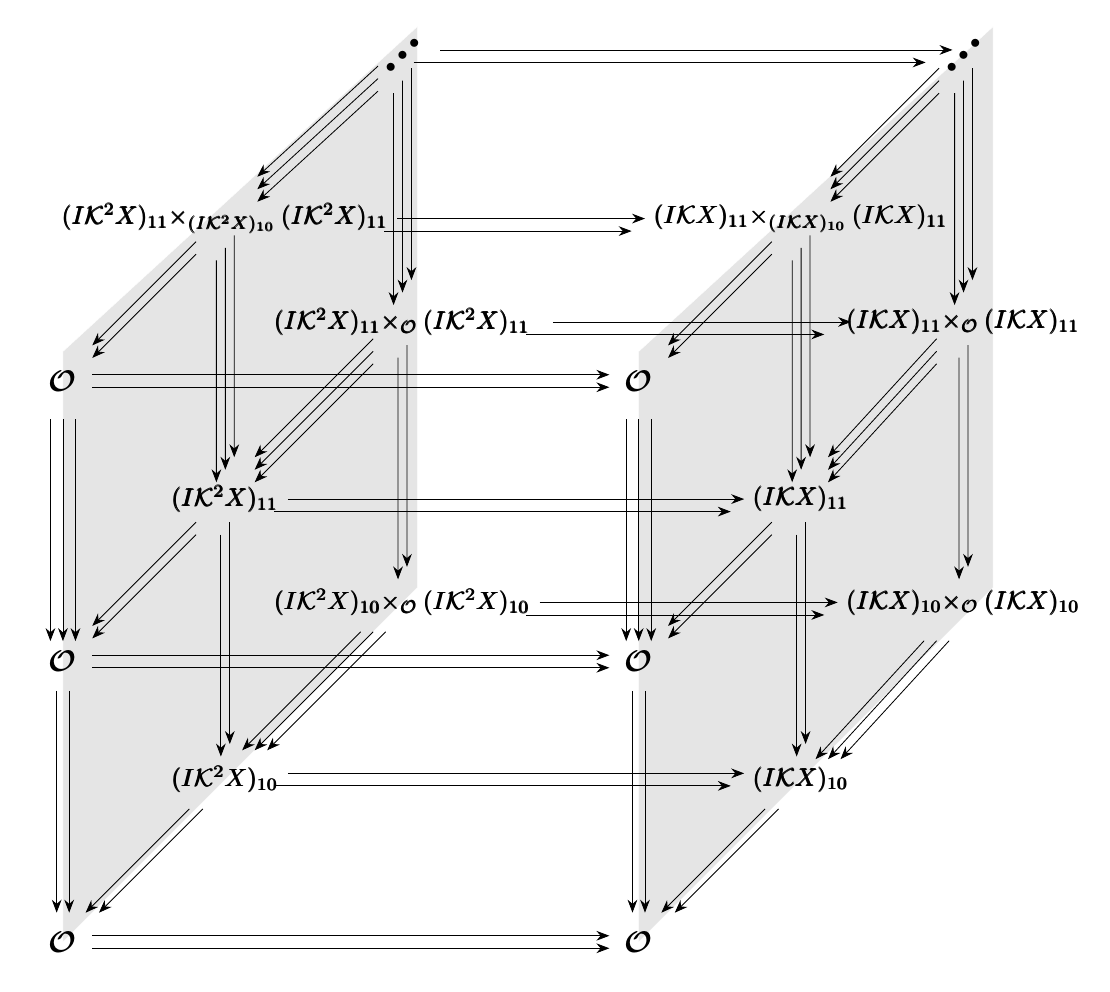}
\end{gathered}
\end{myeq}
The augmentation map \w{\zve:\ckb X\rw X} induces a map
\w{W\rw \oN{}\,\odI c X} in \w[.]{\funcat{3}{\Set}}
Let \w{Z=W\sp{(2)}} be $W$ thought of a simplicial object in \w{\funcat{2}{\Set}}
along the direction appearing diagonal in the picture, that is
\begin{myeq}\label{eq3-procomres}
  Z\in [\Dop,\funcat{2}{\Set}]
\end{myeq}
with \w{Z\sb{0}} the constant bisimplicial set on $\clO$, \w{Z\sb{1}} is given by
\myodiag[\label{eq4-procomres}]{
\vdots & \vdots & \vdots\\
(\dI\ck^3 X)\sb{12} \ar@<1ex>[r] \ar@<0.0ex>[r]  \ar@<-1ex>[r]
\ar@<1ex>[dd] \ar@<0.0ex>[dd]  \ar@<-1ex>[dd]
& (\dI\ck\sp{2} X)\sb{12} \ar@<1ex>[dd] \ar@<0.0ex>[dd]
\ar@<-1ex>[dd] \ar@<0.5ex>[r] \ar@<-0.5ex>[r]
& (\dI\ck X)\sb{12} \ar@<1ex>[dd] \ar@<0.0ex>[dd]  \ar@<-1ex>[dd]\\ \\
(\dI\ck^3 X)\sb{11} \ar@<1ex>[r] \ar@<0.0ex>[r]  \ar@<-1ex>[r] \ar@<0.5ex>[dd]
\ar@<-0.5ex>[dd]
&  (\dI\ck\sp{2} X)\sb{11} \ar@<0.5ex>[r] \ar@<-0.5ex>[r] \ar@<0.5ex>[dd] \ar@<-0.5ex>[dd]
& \ar@<0.5ex>[dd] \ar@<-0.5ex>[dd] (\dI\ck X)\sb{11}\\ \\
 (\dI\ck^3 X)\sb{10} \ar@<1ex>[r] \ar@<0.0ex>[r]  \ar@<-1ex>[r]
&  (\dI\ck\sp{2} X)\sb{10} \ar@<0.5ex>[r] \ar@<-0.5ex>[r]
& (\dI\ck X)\sb{10}
}
and \w{Z\sb{k}\cong \pro{Z\sb{1}}{Z\sb{0}}{k}} for each \w[.]{k\geq 2}
By applying the diagonal functor \w{\diag:\funcat{2}{\Set}\rw \funcat{}{\Set}}
dimensionwise to $Z$ (viewed as in \eqref{eq3-procomres}) we obtain
\begin{equation*}
\ovdia Z=N\Ovdia\ckb  X\in[\Dop,\funcat{}{\Set}]
\end{equation*}
\noindent for \w{\Ovdia} as in \wref[.]{eq1-procomres}
Note that \w{\ovdia Z} is an \wwb{\SO}category: in fact, \w{(\ovdia Z)\sb{0}} is the
constant simplicial set at $\clO$. Since \w{\diag} preserves limits, by
\wref{eq4-procomres} for each \w{k\geq 2} we obtain
\begin{equation*}
\begin{split}
  (\ovdia Z)\sb{k} & \cong\diag\, Z\sb{k}\cong
  \pro{\diag\,Z\sb{1}}{\diag\,Z\sb{0}}{k}= \\
    & =\pro{(\ovdia Z)\sb{1}}{(\ovdia Z)\sb{0}}{k}\;.
\end{split}
\end{equation*}
Therefore, \w{\ovdia Z} is an \wwb{\clS,\clO}category. By a slight abuse of notation,
we identify \w{\ovdia Z} with \w[.]{\Ovdia \ckb X}

The augmentation map \w{\zve:\ckb X\rw X} induces a map \w{\za:\Ovdia\ckb X \rw \dI X}
of \wwb{\clS,\clO}categories.
\end{mysubsection}

\begin{lemma}\label{lem1-procomres}
For  \w[,]{X\in\Trk{n}} the map  \w{\za:\Ovdia\ckb X \rw \dI X}
is a Dwyer-Kan equivalence in \w[.]{\SOC}
\end{lemma}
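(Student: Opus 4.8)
The plan is to prove that $\za\colon\Ovdia\ckb X\to\dI X$ is a Dwyer--Kan equivalence by checking the two defining conditions: that it induces an equivalence on the categories of path components (the ``$\pi_0$'' condition), and that it induces a weak equivalence on each mapping simplicial set (the ``$\pi_*$'' condition). Since both $\Ovdia\ckb X$ and $\dI X$ have the same object set $\clO$ and $\za$ is the identity on objects, the $\pi_0$ condition reduces to showing that $\Ho(\Ovdia\ckb X)\to\Ho(\dI X)$ is full and essentially surjective; essential surjectivity is automatic, so the content is on mapping spaces plus detecting which homotopy classes of maps become equal or invertible.

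First I would reduce the statement to a levelwise claim about the comonad resolution. The key input is the general fact (recorded in \S\ref{procomres}, citing \cite[\S 8.6]{WeibH}) that for any comonad $\ck$ arising from an adjunction, the augmented simplicial object $\ckb X\to X$ is a simplicial resolution: applying the underlying forgetful functor $VU\sb{n}$ yields a simplicially contractible augmented simplicial object, i.e.\ one admitting extra degeneracies. The functor $\Ovdia=\diag\circ\odI$ is built from nerves and diagonals, all of which preserve products over $\clO$ and send the relevant limits to the correct limits; in particular $\Ovdia\ckb X$ is an $(\clS,\clO)$-category with $(\Ovdia\ckb X)\sb{k}\cong\pro{(\Ovdia\ckb X)\sb{1}}{(\Ovdia\ckb X)\sb{0}}{k}$, as already observed just before the statement. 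So the mapping space $\map\sb{\Ovdia\ckb X}(a,b)$ is obtained by applying $\diag$ (in the comonad direction and then across the remaining bisimplicial directions) to the tri-simplicial set whose $q$-th comonad level is $\diag\Ne{n}\,\ck\sp{q+1}X(a,b)$, the diagonal of the multinerve of the weakly globular $n$-fold groupoid of morphisms $a\to b$ in $\ck\sp{q+1}X$.

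Next I would exploit that $\ck=L\sb{n}FVU\sb{n}$ factors through the free graph functor $FV$: the mapping weakly globular $n$-fold groupoid of $\ck X$ between two objects is built from free data on the underlying graph of $U\sb{n}X$, and the $n$-fold groupoids $\elll{n-1}Y=H(Y\sb{s}\cop Y\sb{t}\rightleftarrows Y)$ produced by the left adjoints of Lemma~\ref{lem1-comores} are homotopically discrete equivalence relations (as used in the proof of that lemma), hence by Proposition~\ref{pro01} their discretization maps are $n$-equivalences, so $B\elll{n-1}Y\simeq Y^d$. Classifying-space/nerve realization converts the simplicial contractibility of $U\sb{n}\ckb X$ (from the extra degeneracies) into a weak equivalence $\diag\,\Ne{n}\ckb X(a,b)\xrightarrow{\ \simeq\ }\Ne{n}X(a,b)$ on diagonals; here the standard fact that $\diag$ of a simplicial object of bisimplicial sets which is dimensionwise a weak equivalence to a constant one is again such an equivalence (the Bousfield--Friedlander / realization lemma) does the bookkeeping. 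Feeding this into the description of mapping spaces above gives that $\za$ induces a weak equivalence $\map\sb{\Ovdia\ckb X}(a,b)\to\map\sb{\dI X}(a,b)$ for all $a,b\in\clO$, and the same contractibility controls $\Ho(\za)$, giving the $\pi_0$ condition.

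I expect the main obstacle to be the compatibility of the several diagonals and nerves with the contractibility coming from the comonad: one must be careful that collapsing all $n$ groupoid directions together with the category direction via $\Ovdia$ does not destroy the extra-degeneracy structure, and that the weak equivalences $B\elll{n-1}Y\simeq Y^d$ assemble coherently across the simplicial resolution direction. Concretely, the technical heart is a realization-lemma argument showing that $\diag$ applied to the tri-simplicial set $W=\oN{}\,\odI\ckb X$ along the comonad direction produces a weak equivalence onto $\oN{}\,\odI cX$; once that is in hand, restricting to the fiber over a pair $(a,b)\in\clO\times\clO$ and invoking the explicit product-over-$\clO$ decomposition of $Z\sb{k}$ from \eqref{eq4-procomres} finishes both the mapping-space and the homotopy-category parts, so $\za$ is a Dwyer--Kan equivalence in $\SOC$.
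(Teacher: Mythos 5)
Your opening reduction (identity on objects, so it suffices to check mapping spaces) and your identification of the key input (the extra degeneracies on \w{VU\sb{n}\ckb X} supplied by the adjunction) coincide with the paper's first steps, but there is a genuine gap at the point where you hand the rest of the argument to ``the realization lemma.'' The contractibility coming from the comonad lives only in \w[:]{\Gpho} it gives asphericity of the augmented simplicial set \w[,]{q\mapsto (VU\sb{n}\ck\sp{q+1}X)\sb{1}(a,b)} that is, of the map \w{(\ckb X)\sb{1J}(a,b)\to(cX)\sb{1J}(a,b)} for the \emph{single} multi-index \w[.]{J=(1,\dotsc,1)} To deduce that the diagonal \w{\za(a,b)} is a weak equivalence one needs this map to be a weak equivalence of simplicial sets (in the comonad direction) for \emph{every} \w[,]{J\in\Dnop} and the extra degeneracies are not morphisms of $n$-track categories, so they do not propagate to other $J$ for free. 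The paper's proof is devoted almost entirely to this propagation: a retract argument via degeneracies to pass from entries equal to $1$ to entries equal to $0$; an induction on the number of entries equal to $2$, in which the Segal condition for nerves of internal groupoids exhibits \w{(\ckb X)\sb{1J}(a,b)} as a pullback and one must check that the structure maps are Kan fibrations (they are pullbacks of the fold map $\nab$ onto $\pi\sb{0}$, which is a fibration by the explicit equivalence-relation form of $\ck$) so that these pullbacks are homotopy pullbacks; and finally an induction on entries greater than $2$. None of this appears in your sketch, and the ``dimensionwise weak equivalence'' hypothesis of the realization lemma is exactly what these cases establish.

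The alternative ingredient you invoke --- that each \w{\ck\sp{q+1}X} has homotopically discrete hom-objects, so each level of the resolution collapses up to equivalence to a discrete set --- is true but does not close the gap. The levelwise maps \w{\diag\Ne{n}\ck\sp{q+1}X(a,b)\to\diag\Ne{n}X(a,b)} are \emph{not} weak equivalences (the source is weakly discrete, the target generally is not), so the realization lemma cannot be applied in the comonad direction; and knowing the homotopy type of each level does not determine the diagonal, since the homotopy type of \w{(\Ovdia\ckb X)(a,b)} is created precisely by the gluing across the resolution direction (compare the nerve of a group, which is levelwise discrete). The realization lemma must therefore be applied with the multi-simplicial degrees $J$ as the levels, which forces the case analysis above.
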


\begin{proof}
Since \w[,]{\za\sb{0}=\Id} we need to show that for each \w{a,b\in\clO} the map
\begin{myeq}\label{eq5-procomres}
  \za(a,b):(\Ovdia\ckb X)(a,b)\rw(\dI X)(a,b)
\end{myeq}
is a weak homotopy equivalence. This map is the diagonal of the following map
$$
  \zb(a,b):(\ckb X)\sb{1}(a,b)\rw (cX)\sb{1}(a,b)
$$
\noindent in \w[.]{[\Dop,\funcat{n}{\Set}]}
To show that \w{\diag\,\zb(a,b)=\za(a,b)} is a weak equivalence, it is enough
to show that \w{\zb(a,b)} is a levelwise weak equivalence of simplicial sets
in each multi-simplicial dimension: that is, for each
\w{J=(j\sb{1},\ldots,j\sb{n})} in \w[,]{\Dnop} the map
\begin{myeq}\label{eq6-procomres}
  (\ckb X)\sb{1J}(a,b)\rw(cX)\sb{1J}(a,b)
\end{myeq}
is a weak equivalence of simplicial sets.
We distinguish various cases:
\begin{itemize}
\item [a)] If \w[,]{J=(1,\overset{n}{\ldots},1)} then \w{UX=X\sb{J}}
  by definition of $U$. Therefore, \w[,]{X\sb{1J}(a,b)=(VUX)\sb{1}(a,b)}
and thus \w[.]{(\ckb X)\sb{1J}(a,b)=(VU\ckb X)\sb{1}(a,b)}

The augmented simplicial object \w{VU(\ckb X)\xrw{VU\zve}VUX} is aspherical
(see \cite[Prop. 8.6.10]{BP2019}), so the same is true for
\w[.]{(VU\ckb X)\sb{1}(a,b)\rw (VUX)\sb{1}(a,b)}

Thus by the above \w{(\ckb X)\sb{1J}(a,b)\rw (cX)\sb{1J}(a,b)}
is a weak equivalence.
\item [b)] If \w{J=(j\sb{1},\ldots,j\sb{n})\in\Dnop}
with \w{j\sb{i}\in\{0,1\}} for \w[,]{i=1,\ldots,n}
we proceed by descending induction on the number $k$ of digits $1$ in $J$.

When \w[,]{k=0} we are in case a), so we may assume
$$
J\sb{1}\up{i}~=~(j\sb{1},\ldots,j\sb{i-1},1,j\sb{i+1},\ldots,j\sb{n})
$$
\noindent has \w{k\geq 1} digits $1$. Suppose by induction that
\w{(\ckb X)\sb{1J\sb{1}\up{i}}(a,b)\rw (cX)\sb{1J\sb{1}\up{i}}(a,b)}
is a weak equivalence.
Let \w{J\sb{0}\up{i}=(j\sb{1},\ldots,j\sb{i-1},0,j\sb{i+1},\ldots,j\sb{n})}
The number of digits $1$ in \w{J\sb{0}\up{i}} is \w[.]{k-1}
From the commuting diagram (given by the face and degeneracy maps)
\begin{myeq}\label{eq7-procomres}
\begin{gathered}
\xymatrix@C=50pt{
  (\ck\sp{\sbl}X)\sb{1J\sb{1}\up{i}}(a,b)\ar[r] \ar@<1ex>[d] &
  (cX)\sb{1J\sb{1}\up{i}}(a,b) \ar@<1ex>[d]\\
  (\ck\sp{\sbl}X)\sb{1J\sb{0}\up{i}}(a,b)\ar[r] \ar@<1ex>[u] &
  (cX)\sb{1J\sb{0}\up{i}}(a,b) \ar@<1ex>[u]
}
\end{gathered}
\end{myeq}
we see that the bottom map in \wref{eq7-procomres} is a retract of the top map.
Since the latter is a weak equivalence by induction hypothesis we deduce that
\w{(\ckb X)\sb{1J\sb{0}\up{i}}(a,b)\rw (cX)\sb{1J\sb{0}\up{i}}(a,b)}
is a weak equivalence. This proves the inductive step, so we conclude that
\w{(\ckb X)\sb{1J}(a,b)\rw (cX)\sb{1J}(a,b)}
is a weak equivalence for any \w{J=(j\sb{1},\ldots,j\sb{n})\in\Dnop} with each
\w{j\sb{i}} in \w[.]{\{0,1\}}
\item [c)] Let \w{J=(j\sb{1},\ldots,j\sb{n})} with each \w{j\sb{i}} in \w[.]{\{1,2\}}
We proceed by increasing induction on the number $k$ of digits $2$ in $J$.
When \w[,]{k=0} we are in case a).

Now assume that \w{J\sb{2}\up{i}=(j\sb{1},\ldots,j\sb{i-1},2,j\sb{i+1},\ldots,j\sb{n})}
has \w{k>0} digits $2$. Then \w{J\sb{1}\up{i}} and \w{J\sb{0}\up{i}} have
\w{k-1} digits $2$, and by inductive hypothesis \wref{eq6-procomres}
is a weak equivalence when \w{J=J\sb{1}\up{i}} and \w[.]{J=J\sb{0}\up{i}}
Since each simplicial direction in $J$ is the nerve of a groupoid, the map
\wref{eq6-procomres} for \w{J=J\sb{2}\up{i}} is the induced map
of pullbacks
\myqdiag[\label{eq8-procomres}]{
  (\ckb X)\sb{1J\sb{1}\up{i}}(a,b)\ar[r]\sp{\pt\sb{0}\sp{\sbl}} \ar[d] &
  (\ckb X)\sb{1J\sb{0}\up{i}}(a,b)\ar[d] &
  (\ckb X)\sb{1J\sb{1}\up{i}}(a,b) \ar[l]\sb{\pt\sb{1}\sp{\sbl}} \ar[d]\\
  (c X)\sb{1J\sb{1}\up{i}}(a,b)\ar[r]\sp{\pt\sb{0}}  &
  (c X)\sb{1J\sb{0}\up{i}}(a,b) & (c X)\sb{1J\sb{1}\up{i}}(a,b) \ar[l]\sb{\pt\sb{1}}
}
We claim that \w{\pt\sb{1}\sp{\sbl}}  and \w{\pt\sb{0}\sp{\sbl}} are fibrations.
In fact, by definition of $\ck$ there is a pullback
$$
\xymatrix@C=40pt{
  (\ckb X)\sb{1J\sb{1}\up{i}}(a,b)\ar[r]\sp{\pt\sb{1}\sp{\sbl}} \ar[d] &
  (\ckb X)\sb{1J\sb{0}\up{i}} (a,b)\ar[d]\sb{\nab}\\
  (\ckb X)\sb{1J\sb{0}\up{i}}(a,b)\ar[r]\sb{\nab}  &
  \pi\sb{0}(\ckb X)\sb{1J\sb{0}\up{i}} (a,b)
}
$$
where $\nab$ is the fold map. This map is a fibration of simplicial sets
(see \cite[p. 903]{BP2019}). Thus \w[,]{\pt\sp{\sbl}\sb{1}} being the pullback
of a fibration, is itself a fibration. Since, by inductive hypothesis,
the vertical maps in \wref{eq8-procomres} are weak equivalences and
the horizontal maps are fibrations, the induced map of pullbacks is a
fibration. Thus \wref{eq6-procomres} holds for \w[,]{J\sb{2}\up{i}}
proving the inductive step.
\item [d)] In that case \w[,]{J=(j\sb{1},\ldots,j\sb{n})} we work by induction on
  the number $k$ of entries \w{j\sb{i}} with \w[.]{j\sb{i}>2} When \w[,]{k=0}
  we are in case c). Let $J$ be such that \w[.]{j\sb{i}>2} Then
$$
X\sb{1J}=\pro{X\sb{1J\sb{1}\up{i}}}{X\sb{1J\sb{0}\up{i}}}{j\sb{i}}
$$
and similarly for \w[.]{(\ckb X)\sb{1J}} By the  induction hypothesis,
\wref{eq6-procomres} is a weak equivalence for \w{J\sb{1}\up{i}} and \w[.]{J\sb{0}\up{i}}
Reasoning as in c), the maps \w{\sb{1J\sb{1}\up{i}}(a,b)\rw X\sb{1J\sb{0}\up{i}}(a,b)}
are fibrations, and similarly for the maps
\w[.]{(\ckb X)\sb{1J\sb{1}\up{i}}(a,b)\rw (\ckb X)\sb{1J\sb{0}\up{i}}(a,b)}
Therefore, the induced map of iterated pullbacks is a weak equivalence.
That is, \wref{eq6-procomres} is a weak equivalence.
\end{itemize}
\end{proof}

\begin{corollary}\label{ccofrepl}
For any \w[,]{X\in\Trk{n}} \w{\Ovdia\ckb X} is a cofibrant replacement for \w{\dI X}
in the Dwyer-Kan model category \w{\SOC} (see \cite[\S 7.1]{DKanS}).
\end{corollary}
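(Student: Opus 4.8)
The plan is to combine Lemma \ref{lem1-procomres} with a cofibrancy statement for $\Ovdia\ckb X$. By Lemma \ref{lem1-procomres} the augmentation $\za\colon\Ovdia\ckb X\to\dI X$ is a Dwyer--Kan equivalence in $\SOC$, so it suffices to show that $\Ovdia\ckb X$ is cofibrant in the Dwyer--Kan model category $\SOC$. For this I would use the description of cofibrant objects from \cite{DKanS}: an object of $\SOC$ is cofibrant as soon as it is \emph{free} (or a retract of a free object), where a simplicial category $\clY\in\SOC$ is free if there are subgraphs $\widehat{\clY}_q\subseteq\clY_q$ with $\clY_q$ the free category on $\widehat{\clY}_q$ and every degeneracy operator carrying $\widehat{\clY}_q$ into $\widehat{\clY}_{q+1}$; we will in fact obtain freeness on the nose.

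Since $\ck=L_nFVU_n$, we have $\ck_mX=\ck^{m+1}X=L_nF(\zG_m)$ with $\zG_m:=VU_n\ck^mX\in\Gpho$, so the category of $m$-simplices of $\Ovdia\ckb X$ is the category of $m$-simplices of the simplicial category $\dI L_nF(\zG_m)=\diag\Ne{n}L_nF(\zG_m)$. The key step is therefore to analyse $\dI L_nF(\zG)$ for an arbitrary graph $\zG\in\Gpho$, and I claim that it is a free simplicial category, with a generating subgraph in each degree that depends functorially on $\zG$. To prove this I would unwind the $n$-fold nerve of the ``homotopically discrete fattening'' $L_n=L_{[0]}\cdots L_{[n-1]}$: each constituent functor is built from $\elll{k}$ as in the proof of Lemma \ref{lem1-comores}, where $\elll{k}X=H(X_s\cop X_t\rightleftarrows X)$ is a levelwise equivalence relation with $\p{k}\elll{k}X=X$; hence in each multidegree the multinerve $\Ne{n}L_nF(\zG)$ is built from copies of the free category $F(\zG)$ by iterated pullbacks and coproducts (the same ones that enter the proof of Lemma \ref{lem1-procomres}), and passing to the diagonal produces, in each simplicial degree, a free category whose generators -- obtained from $\zG$ by the $\elll{k}$-direction operations -- are stable under the degeneracy operators.

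Granting this, the argument concludes as follows. The functorial generating subgraphs of the $\dI L_nF(\zG_m)$ exhibit each $(\Ovdia\ckb X)_m=(\dI L_nF(\zG_m))_m$ as a free category. A degeneracy operator of $\Ovdia\ckb X$ is the diagonal of a degeneracy in the $\dI$-direction and one in the resolution direction: the former preserves the chosen generators by the previous paragraph; the latter, $\dI\zs^i=\dI(\ck^i\zd\,\ck^{m-i})\colon\dI\ck_mX\to\dI\ck_{m+1}X$, is obtained by applying functors built out of $L_nF$ to a map of graphs (the unit $\eta$ entering $\zd=L_nF(\eta)VU_n$), hence by functoriality is a morphism of free simplicial categories carrying generators to generators. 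Thus every degeneracy of $\Ovdia\ckb X$ carries generators to generators, so $\Ovdia\ckb X$ is free, hence cofibrant in $\SOC$; together with Lemma \ref{lem1-procomres}, this shows that $\Ovdia\ckb X$ is a cofibrant replacement of $\dI X$. (Alternatively, the same input shows that $\odI\ckb X$ is a Reedy-cofibrant simplicial object of $\SOC$ with cofibrant terms, whose latching maps are inclusions of generating subgraphs, and one concludes that its diagonal is cofibrant.)

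The main obstacle is the combinatorial claim of the second paragraph: understanding the $n$-fold nerve of the iterated fattening $L_n$ -- in particular the multinerve of the equivalence relation $\elll{k}X$, which is ``codiscrete'' in one of its directions -- precisely enough to read off, after taking the full diagonal, a functorial system of free generators in each simplicial degree that is stable under the degeneracies. Once this is done, the generator bookkeeping for $\zd$ and the appeal to the cofibrancy criterion of \cite{DKanS} are routine.
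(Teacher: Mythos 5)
Your approach matches the paper's: the published proof is a one-liner, deducing cofibrancy from the observation that $\Ovdia\ckb X$ is a free $\SO$-category (because $\ck$ is built from the free-category-on-a-graph adjunction of \S\ref{scntc}) and referring to the analogous argument in \cite[Lemma 4.6]{BP2019}, with the weak equivalence supplied by Lemma \ref{lem1-procomres}. The combinatorial freeness bookkeeping you flag as the main obstacle is precisely what the paper delegates to that cited reference.
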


\begin{proof}
This follows as in \cite[Lemma 4.6]{BP2019} from the fact that \w{\Ovdia\ckb X} is
a free \ww{\SO}-category (see \S \ref{scntc} above).
\end{proof}

%
%c3    Modules
%
\sect{Modules}
\label{cmodule}

The coefficients of a cohomology theory must be a suitable type of module,
in the sense due to Beck in his thesis (see \cite{BecT}):

\begin{mysubsection}{Abelian group objects}\label{genset}
An \emph{abelian group object} in a category $\clD$  with finite products is an
object $G$ equipped with
\begin{itemize}
\item [(i)] a \emph{unit} map \w[,]{\zs:\ast\rw G} where $\ast$ is the terminal
object in \w[,]{\clD} with map \w[;]{\rho:G\to\ast}
\item [(ii)] an associative, commutative, and unital
\emph{multiplication map} \w[,]{\mu:G\times G\rw G} and
\item [(iii)] an \emph{inverse map} \w[.]{i:G\rw G}
\end{itemize}

These are required to satisfy the usual axioms:
\begin{itemize}
\item [a)] Associativity:
\w[.]{\mu\circ(\Id\times\mu)=\mu\circ(\mu\times\Id)\colon
M\oset{\zr}{\times}M\oset{\zr}{\times} M\to M}
\item [b)] Commutativity: \w[,]{\mu\circ\tau=\mu:M\oset{\zr}{\times}M\to M} where
\w{\tau=M\times M\to M\times M} is the switch map.
\item [c)] Inverse: \w[,]{\mu\circ(\Id\times i)\circ\zD=\sigma\circ\rho:M\to M}
where \w{\zD:G\rw G\times G} is the diagonal.
\item [d)] Zero element: \w[.]{\mu\circ(\Id,\zs\zr)=\sigma:M\to M}
\end{itemize}

From these axioms we see that
\begin{myeq}\label{eq2-genset}
\begin{gathered}
\xymatrix@C=40pt{
M\ar[r]^(0.4){\zD} & M\oset{\zr}{\times}M \ar[r]^(0.55){\mu} & M\\
X\ar[u]\sp{\zs} \ar[rru]\sb{\zs}
}
\end{gathered}
\end{myeq}
commutes.
\end{mysubsection}

\begin{mysubsection}{Eilenberg-Mac~Lane objects}\label{eilmacob}
A module $M$ over an object \w{Q\in\GC} is an abelian group object in
\w[.]{(\GC,Q\sb{0})/Q} Thus it is given by a map
\w{\zr:M\rw Q} in \w{\GC} equipped with a section $\zf$ and a multiplication
map \w{\mu: M\oset{\zr}{\times}M \rw M} satisfying the axioms as in \S \ref{genset}.

We now build a groupoid object \w{\EQM{2}\in\GnC{2}} in \w[:]{\GC}
$$
\xymatrix@L=2pt{
  M\oset{\zr}{\times}M \ar[r]\sp{\mu} & M \ar@<2ex>[r]\sp{\zr}\ar[r]\sb{\zr} &
  Q \ar@<2.2ex>[l]\sp{\zf}
}
$$
The fact that this is a groupoid object follows from $M$ being an abelian group object
in \w[.]{(\GC, Q\sb{0})/Q} More explicitly, \w{\EQM{2}} has the form
\myrdiag[\label{eq1-eilmacob}]{
  M\sb{1}\oset{\zr\sb{1}}{\times}M\sb{1} \ar[r]\sp{\mu\sb{1}} \ar@<-1ex>[d]\ar[d] &
  M\sb{1} \ar@<2ex>[r]\sp{\zr\sb{1}}\ar[r]\sb{\zr\sb{1}}\ar@<-1ex>[d]\ar[d]  &
  Q\sb{1}\ar@<-1ex>[d]\ar[d] \ar@<2.2ex>[l]\sp{\zf\sb{1}}\\
Q\sb{0} \ar@<-1ex>[u] \ar@{=}[r] & Q\sb{0} \ar@<-1ex>[u] \ar@{=}[r] &
Q\sb{0} \ar@<-1ex>[u]
}
\end{mysubsection}

\begin{lemma}\label{lem1-eilmacob}
Let \w{\zr:A\rw B} and \w{\zf:B\rw A} be maps in \w{\GC} with
\w{\zr\zf=\Id} and \w[.]{\zr\sb{0}=\Id} Then \w[,]{pB\cong pA} where
\w{p:\GC\rw \clC} is the isomorphism classes of object functor.
\end{lemma}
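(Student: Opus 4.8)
The plan is to prove the slightly sharper statement that $p\zr\colon pA\rw pB$ is an isomorphism with inverse $p\zf$, which immediately yields $pB\cong pA$. Recall that for an internal groupoid $G\in\GC$ the object $pG\in\clC$ is the coequalizer of the source and target maps $d\sb{0},d\sb{1}\colon G\sb{1}\rightrightarrows G\sb{0}$; write $q\sb{G}\colon G\sb{0}\twoheadrightarrow pG$ for the coequalizing arrow, which is an epimorphism, and recall that for an internal functor $f\colon G\rw G'$ the induced map $pf$ is uniquely determined by $pf\circ q\sb{G}=q\sb{G'}\circ f\sb{0}$.

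First I would read off what the two hypotheses say on objects. The equality $\zr\sb{0}=\Id$ only makes sense once $A\sb{0}$ and $B\sb{0}$ are identified, which we henceforth do. Then $\zr\zf=\Id\sb{B}$ gives $\zr\sb{0}\circ\zf\sb{0}=\Id\sb{B\sb{0}}$, and since $\zr\sb{0}=\Id\sb{A\sb{0}}$ this forces $\zf\sb{0}=\Id\sb{A\sb{0}}$ as well. In particular the endofunctor $\zf\zr\colon A\rw A$ is the identity on objects: $(\zf\zr)\sb{0}=\zf\sb{0}\circ\zr\sb{0}=\Id\sb{A\sb{0}}$.

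Next I would use the elementary observation that $p$ sends every identity-on-objects internal endofunctor $g\colon G\rw G$ to the identity of $pG$: indeed $pg\circ q\sb{G}=q\sb{G}\circ g\sb{0}=q\sb{G}=\Id\sb{pG}\circ q\sb{G}$, whence $pg=\Id\sb{pG}$ because $q\sb{G}$ is epic. Applying this with $g=\zf\zr$ gives $p\zf\circ p\zr=p(\zf\zr)=\Id\sb{pA}$, while functoriality of $p$ together with $\zr\zf=\Id\sb{B}$ gives $p\zr\circ p\zf=p(\zr\zf)=\Id\sb{pB}$. Hence $p\zr$ is invertible, with inverse $p\zf$, and therefore $pB\cong pA$.

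I do not anticipate any serious obstacle; the only real content of the lemma is to pinpoint where the hypothesis $\zr\sb{0}=\Id$ enters. By pure functoriality one gets only that $p\zr$ is a split epimorphism (split by $p\zf$); it is precisely the identity-on-objects condition --- via the remark that $p$ annihilates identity-on-objects functors --- that upgrades this to an isomorphism. The one point calling for a little care is the bookkeeping identification $A\sb{0}=B\sb{0}$ which makes both $\zr\sb{0}=\Id$ and the derived equality $\zf\sb{0}=\Id$ meaningful in the first place.
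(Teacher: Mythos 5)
Your argument is correct. It is, however, a genuinely different packaging of the proof than the one in the paper. The paper works directly with the coequalizer presentation of $p$: it takes an arbitrary $f\colon A_0\to C$ with $fd_0=fd_1$, precomposes with $\phi_1$ to get $fd'_0=fd'_1$ (using $\phi_0=\Id$ implicitly), and then invokes the universal property of $pB$ to factor $f$ through $pB$, concluding that $pB$ also has the universal property of $pA$. You instead promote the proof to a statement about morphisms: you observe that $p$ is a functor, that $p\rho\circ p\phi=\Id_{pB}$ by pure functoriality from $\rho\phi=\Id_B$, and that the other composite $\phi\rho$ is an identity-on-objects endofunctor of $A$, hence hit by $p$ to $\Id_{pA}$ because $q_A$ is epic. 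This makes the role of the hypothesis $\rho_0=\Id$ much more visible than in the paper's write-up, and it produces a canonical isomorphism $p\rho\colon pA\cong pB$ rather than merely an abstract one. Your "$p$ annihilates identity-on-objects endofunctors" observation is exactly the invisible step in the paper's computation (it is what guarantees that the cocone $q'\colon A_0=B_0\to pB$ actually coequalizes $d_0,d_1$, a point the paper passes over). Both proofs are sound; yours is slightly more conceptual and arguably tidier.
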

\begin{proof}
Recall that
\w{pA=\Coeq(\xymatrix{A\sb{1} \ar@<0.5ex>[r]\sp{d\sb{0}}
\ar@<-0.5ex>[r]\sb{d\sb{1}}&A\sb{0}})}
and
\w[.]{pB=\Coeq(\xymatrix{B\sb{1} \ar@<0.5ex>[r]\sp{d'\sb{0}}
    \ar@<-0.5ex>[r]\sb{d'\sb{1}}&B\sb{0}})}
To show that the two coequalizers are isomorphic, consider the diagram
$$
\xymatrix{
  A\sb{1} \ar@<0.5ex>[r]\sp{d\sb{0}}\ar@<-0.5ex>[r]\sb{d\sb{1}} &
A\sb{0} \ar[r]\sp{q} \ar[d]\sb{f} & pB \\
 & C &
}
$$
\noindent in $\clC$, with \w[.]{fd\sb{0}=fd\sb{1}} We have
\w[,]{fd\sb{0}\zf\sb{1}=fd\sb{1}\zf\sb{1}} that is, \w{fd'\sb{0}=fd'\sb{1}}
(since \w{d\sb{i}\zf\sb{1}=d'\sb{1}} for \w[).]{i=0,1}
This implies that there is a map \w{h:pB\rw C} with \w[.]{hq=f} Therefore,
$$
\Coeq(\xymatrix{A\sb{1} \ar@<0.5ex>[r]\sp{d\sb{0}}\ar@<-0.5ex>[r]\sb{d\sb{1}}&
  A\sb{0}})\cong pB\;,
$$
\noindent so  \w[.]{pA\cong pB}
\end{proof}

\begin{corollary}\label{cor1-eilmacob}
For \w{\EQM{2}} as above, \w[,]{\p{1}\EQM{2}=\dd{pQ}}
where \w{\dd{pQ}} is the discrete internal groupoid on \w[.]{pQ}
\end{corollary}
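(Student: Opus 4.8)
The plan is to reduce the statement to a degreewise application of Lemma \ref{lem1-eilmacob}, after unwinding the internal-groupoid structure of $\EQM{2}$ in $\GC$.

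First I would record the nerve of $\EQM{2}$ as an internal groupoid in $\GC$: one has $(\EQM{2})\sb{0}=Q$, $(\EQM{2})\sb{1}=M$, and $(\EQM{2})\sb{k}\cong\pro{M}{Q}{k}$ for $k\geq 2$, with both face maps $(\EQM{2})\sb{1}\rightrightarrows(\EQM{2})\sb{0}$ equal to $\zr$ and with degeneracy $\zf\colon Q\to M$. Since the two face maps agree, the ``projection to $Q$'' assembles into a morphism of internal groupoids $\zr\sb{\bullet}\colon\EQM{2}\to\dd{Q}$ (where $\dd{Q}$ is the discrete internal groupoid on $Q$), whose component in degree $k$ is the map $\bar\zr\sb{k}\colon\pro{M}{Q}{k}\to M\xrw{\zr}Q$ (independent of which projection one uses). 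Because $M$ is an abelian group object in $(\GC,Q\sb{0})/Q$, its object-of-objects is $Q\sb{0}$ and $\zr$ is the identity on objects; hence each $(\EQM{2})\sb{k}$ has object-of-objects $Q\sb{0}$ and $(\bar\zr\sb{k})\sb{0}=\Id\sb{Q\sb{0}}$. Moreover the ``zero'' $k$-simplex $\zf\sb{k}\colon Q\to(\EQM{2})\sb{k}$ (the appropriate iterated degeneracy of $\zf$) is a section of $\bar\zr\sb{k}$, so $\bar\zr\sb{k}\,\zf\sb{k}=\Id\sb{Q}$.

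Thus for every $k$ the pair $(\bar\zr\sb{k},\zf\sb{k})$ satisfies the hypotheses of Lemma \ref{lem1-eilmacob} with $A=(\EQM{2})\sb{k}$ and $B=Q$, so $p\bar\zr\sb{k}\colon p((\EQM{2})\sb{k})\to pQ$ is an isomorphism in $\clC$. Since $\p{1}$ is $p$ applied degreewise, and since $\dd{Q}$ is the constant simplicial object on $Q$ (so that $\p{1}\dd{Q}=\dd{pQ}$), this says exactly that $\p{1}\zr\sb{\bullet}\colon\p{1}\EQM{2}\to\dd{pQ}$ is a degreewise isomorphism, hence an isomorphism, of internal groupoids in $\clC$; and it is the identity on objects in degree $0$. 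Therefore $\p{1}\EQM{2}=\dd{pQ}$.

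I do not anticipate a real obstacle: the result is essentially Lemma \ref{lem1-eilmacob} applied in each simplicial degree. The only point requiring attention is that the degreewise isomorphisms $p\bar\zr\sb{k}$ be compatible with the simplicial operators, so that they assemble into an isomorphism of internal groupoids and not merely a family of isomorphisms; this is ensured by exhibiting them as the components of $\p{1}$ applied to the genuine morphism of internal groupoids $\zr\sb{\bullet}\colon\EQM{2}\to\dd{Q}$.
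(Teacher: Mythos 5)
Your proposal is correct and follows essentially the same route as the paper: both apply Lemma \ref{lem1-eilmacob} in each simplicial degree $k\geq 1$ to the split map $(\EQM{2})\sb{k}\to Q$, which is the identity on objects, to conclude $p(\EQM{2})\sb{k}\cong pQ$. Your extra remark that the degreewise identifications are the components of $\p{1}$ applied to the morphism $\EQM{2}\to\dd{Q}$ (so they are compatible with the simplicial operators) is a detail the paper leaves implicit, but it is not a different argument.
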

\begin{proof}
For each \w{k\geq 1} there is a split map \w{(\EQM\up{2})\sb{k}\rw Q}
in \w{\GC} which is the identity on objects. Thus by Lemma \ref{lem1-eilmacob},
\w[.]{p(\EQM{2})\sb{k}=pQ=pQ} This shows that \w[.]{\p{1}\EQM{2}=\dd(pQ)}
\end{proof}

\begin{lemma}\label{lem2-eilmacob}
There are maps
$$
\xymatrix{
\EQM{2} \ar@<0.5ex>[r]\sp{\zr\up{2}} & \dpp{1}Q \ar@<0.5ex>[l]\sp{\zf\up{2}}
}
$$
in \w{\Gpd(\GC)} given by
\myrdiag[\label{eq2-eilmacob}]{
  M\oset{\zr}{\times}M \ar[r]\sp{\mu} \ar@<0.5ex>[d]\sp{\zr\mu} &
  M \ar@<2ex>[r]\sp{\zr}\ar[r]\sp{\zr}\ar@<-0.5ex>[d]\sb{\zr}  & Q\ar@<2ex>[l]\sp{\zf}\\
  Q \ar@<0.5ex>[u]\sp{\zD\zf} \ar@{=}[r] & Q \ar@<-0.5ex>[u]\sb{\zf} \ar@{=}[r] &
  Q \ar@{=}[u]
}
satisfying \w[.]{\zr\up{2}\zf\up{2}=\Id}
\end{lemma}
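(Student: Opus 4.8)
The plan is to regard \w{\EQM{2}} and \w{\dpp{1}Q} as internal groupoid objects in the ambient category \w[,]{\GC} so that a morphism between them in \w{\Gpd(\GC)} is simply an internal functor in \w[:]{\GC} a map on ``objects'' (the degree-$0$ part) and a map on ``morphisms'' (the degree-$1$ part) commuting with source, target, identity and composition, the degree-$2$ component on composable pairs being then forced. For \w{\EQM{2}} the objects are $Q$, the morphisms are $M$, both source and target are \w[,]{\zr} the identity is \w[,]{\zf} and composition is \w[;]{\mu} for \w{\dpp{1}Q} all three levels are $Q$ with all structure maps the identity. I would therefore \emph{define} \w{\zr\up{2}} to be \w{\Id\sb{Q}} in degree $0$ and \w{\zr\colon M\rw Q} in degree $1$, and \w{\zf\up{2}} to be \w{\Id\sb{Q}} in degree $0$ and \w{\zf\colon Q\rw M} in degree $1$; the maps induced on composable pairs are then \w{\zr\circ\mu} and \w[,]{\zD\circ\zf} exactly as displayed in \wref[.]{eq2-eilmacob} Each of these components is a map in \w{\GC} because \w[,]{\zr} \w{\zf} and \w{\mu} are.

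Next I would verify that these data really are morphisms of groupoid objects, i.e.\ that the relevant squares commute. Compatibility with source and target is automatic, since by \wref{eq1-eilmacob} both face maps \w{M\rw Q} of \w{\EQM{2}} equal \w[,]{\zr} while those of \w{\dpp{1}Q} are identities. Compatibility with the identity-degeneracy reduces at once to the hypothesis \w[.]{\zr\zf=\Id} The only place the module structure is used is compatibility with composition: for \w{\zr\up{2}} one needs \w[,]{\zr\circ\mu=\zr\circ\pr\sb{1}} which holds because \w{\mu\colon M\oset{\zr}{\times}M\rw M} lives over $Q$ along \w[;]{\zr} for \w{\zf\up{2}} one needs \w[,]{\mu\circ\zD\circ\zf=\zf} i.e.\ that the zero section added to itself is the zero section, which is precisely the commutativity of \wref{eq2-genset} applied to the abelian group object \w{M} of \w[.]{(\GC,Q\sb{0})/Q} Finally \w{\zr\up{2}\zf\up{2}=\Id} is read off in degrees $0$ and $1$, where it is \w{\Id\circ\Id} and \w[,]{\zr\zf=\Id} respectively, and a morphism in \w{\Gpd(\GC)} is determined by those two levels.

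I do not expect a genuine obstacle here: the statement is essentially a bookkeeping verification, and the only non-formal ingredients are \w[,]{\zr\zf=\Id} the fact that \w{\mu} lives over $Q$ along \w[,]{\zr} and the unit identity \w{\mu\circ\zD\circ\zf=\zf} of \wref[.]{eq2-genset} The one point that needs a little care is keeping track of which groupoid direction is being manipulated: it is the outer direction of \w[,]{\GnC{2}} so all of the checks above take place one level down, inside \w[,]{\GC} which is why \w{\zr} and \w{\zf} may be used directly as the degree-$1$ components without further adjustment.
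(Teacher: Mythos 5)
Your argument is correct and follows the same route as the paper's own proof: in both cases the only non-formal commutativity check is $\mu\circ\Delta\circ\phi=\phi$, which comes from the unit axiom \eqref{eq2-genset} for the abelian group object $M$, while the remaining squares are either the section identity $\rho\phi=\Id$ or trivial, and $\rho^{(2)}\phi^{(2)}=\Id$ is read off componentwise. Your write-up simply spells out what the paper dismisses as ``all other cases are trivial,'' but no new ideas or techniques are introduced.
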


\begin{proof}
We need to check the commutativity of \wref[.]{eq2-eilmacob} By \wref[,]{eq2-genset}
\w[.]{\mu\zD\zf=\zf} All other cases are trivial. Hence \w{\zr\up{2}}
and \w{\zf\up{2}} are maps in \w[.]{\Gpd(\GC)} We also have
\w[.]{\zr\mu\zD\zf=\zr\zf=\Id} Therefore, \w[.]{\zr\up{2}\zf\up{2}=\Id}
\end{proof}

\begin{mysubsection}{Abelian group objects}\label{sago}
From the construction of \w{\EQM{2}} and the axioms of \S \ref{genset},
we see that \w{\EQM{2}} is an abelian group object in \w[.]{(\GnC{2},Q)/\dpp{1}Q}

We next define by induction an abelian group object \w{\EQM{n}} in
$$
(\GnC{n},\dpp{n-1}Q)/\dpp{n}Q\;.
$$
For \w[,]{n=2} \w{\EQM{2}} is as above. Now assume by induction that we have defined
\w{\EQM{k}} for \w[,]{k\leq n-1} together with maps
\begin{equation*}
\xymatrix{
\EQM{k} \ar@<0.5ex>[r]\sp{\zr\up{k}} & \dpp{n}Q \ar@<0.5ex>[l]\sp{\zf\up{k}}
}
\end{equation*}
satisfying \w[.]{\zr\up{k}\zf\up{k}=\Id} Let \w{\EQM{n}} be the object
\myrdiag[\label{eq3-eilmacob}]{
  \xrw{\mu\sb{1}\up{n-1}}\oset{\DV}{(E\sb{\clC}\up{n-1}(Q,M))\sb{1}}
  \ar@<2ex>[r]^(0.65){\zr\sb{1}\up{n-1}}\ar[r]\sb(0.65){\zr\sb{1}\up{n-1}} &
  \dpp{n-1}\oset{\DV}{Q\sb{1}} \ar@<3ex>[l]^(0.35){\zf\sb{1}\up{n-1}}
  \ar@<-1ex>[d]\sb{pr\sb{1}}
  \ar@<0ex>[d]\sp{pr\sb{2}} \\
\cdots \dpp{n-1}Q\sb{0} \ar@{=}[r] & \dpp{n-1} Q\sb{0} \ar@<-3ex>[u]\sb{\zD}
}
\noindent of \w[.]{\Gpd\sp{2}(\Gpd\sp{n-2}\clC)=\GnC{n}}
There are maps
$$
\xymatrix{
  \EQM{n} \ar@<0.5ex>[r]\sp{\zr\up{n}} & \dpp{n}Q \ar@<0.5ex>[l]\sp{\zf\up{n}}
}
$$
with \w{\zr\up{n}\zf\up{n}=\Id} given by
\w[,]{\zr\sb{*0}\up{n}=\zf\sb{*0}\up{n}=\Id}
while \w{\zr\sb{*1}\up{n}} and \w{\zf\sb{*1}\up{n}} are given by
$$
\resizebox{\linewidth}{!}{
\xymatrix@C=40pt@R=40pt@L=2pt{
  (\EQM{n-1})\sb{1}
  \tiund{\dpp{n-1}Q\sb{1}}(\EQM{n-1})\sb{1}
  \ar[r]^(0.7){\mu\sb{1}\up{n-1}} \ar@<0.5ex>[d]\sp{\zD\zf\sb{1}\up{n-1}} &
  (\EQM{n-1})\sb{1} \ar@<3ex>[r]\sp{\zr\sb{1}\up{n-1}}\ar[r]\sp{\zr\sb{1}\up{n-1}}
  \ar@<0.5ex>[d]\sp{\zr\sb{1}\up{n-1}}  &
  \dpp{n-1}Q\sb{1}\ar@<3ex>[l]\sb{\zf\sb{1}\up{n-1}}\\
  \dpp{n-1}Q\sb{1} \ar@<0.5ex>[u]\sp{\zr\sb{1}\up{n-1}\mu\sb{1}\up{n-1}}
  \ar@{=}[r] & \dpp{n-1}Q\sb{1} \ar@<0.5ex>[u]\sp{\zf\sb{1}\up{n-1}} \ar@{=}[r] &
  \dpp{n-1}Q\sb{1} \ar@{=}[u]
}}
$$
The commutativity of this diagram is proved as in Lemma \ref{lem2-eilmacob}.
By the induction hypothesis \w{\EQM{n-1}} is an
abelian group object in \w[;]{(\Gpd\up{n-1}\clC,\dpp{n-1}Q)/\dpp{n}Q}
so by \wref[,]{eq2-genset} \w[,]{\mu\up{n-1}\zD\zf\up{n-1}=\zf\up{n-1}}
which implies that \w[.]{\mu\sb{1}\up{n-1}\zD\zf\sb{1}\up{n-1}=\zf\sb{1}\up{n-1}}
The rest of the proof is as in Lemma \ref{lem2-eilmacob}.

To complete the inductive construction of \w[,]{\EQM{n}} we need to show that it
is an abelian group object in \w[,]{(\Gpd\up{n}\clC,\dpp{n-1}Q)/\dpp{n}Q}
with structure maps given by \w[,]{\zf\up{n}:\dpp{n}Q\rw \EQM{n}}
$$
\xymatrix{
\EQM{n}\tiund{c(Q)}\EQM{n} \ar[rr]\sp{\mu\up{n}}\ar[rd] &&
\EQM{n} \ar[ld]\sp{\zr\up{n}}\\
& \dpp{n}Q~, &
}
$$
\noindent and
$$
\xymatrix{
\EQM{n} \ar[rr]\sp{i\up{n}}\ar[rd]\sb{\zr\up{n}} && \EQM{n} \ar[ld]\sp{\zr\up{n}}\\
& \dpp{n}Q &
}
$$
with \w[,]{(i\up{n})\sb{11}=i\sb{1}\up{n-1}} \w[,]{(i\up{n})\sb{\ast0}=\Id}
and \w[.]{(i\up{n})\sb{01}=\Id} The axioms of abelian group object hold since
they hold for \w{\EQM{n-1}} by the induction hypothesis.
\end{mysubsection}

In the following Lemma we give an explicit description of
\w[,]{\clN\sb{n}\EQM{n}} where \w{\clN\sb{n}:\GnC{n}\to\funcat{n}{\clC}}
is the multinerve.

\begin{lemma}\label{lem3-eilmacob}
For \w{\EQM{n}} as above, let \w[.]{E:=\clN\sb{n}\EQM{n}}
Then \w[,]{E\sb{\oset{n}{1\ldots 1}}=M\sb{1}} \w[,]{E\sb{\sbl 0}=dQ\sb{0}}
and \w{E\sb{\uk}=Q\sb{1}} for \w{\uk\neq(\oset{n}{1\ldots 1})} and
\w[.]{\uk\neq (\sbl,0)}
\end{lemma}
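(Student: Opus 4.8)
The plan is to induct on $n \geq 2$, using the inductive construction of $\EQM{n}$ in \S\ref{sago} together with the fact that the multinerve $\clN\sb{n}$ is obtained by iterating the internal nerve one direction at a time along the decomposition $\GnC{n} = \Gpd\sp{2}(\Gpd\sp{n-2}\clC)$.

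For the base case $n=2$, the bisimplicial object $E = \clN\sb{2}\EQM{2}$ is read off directly from the explicit description \eqref{eq1-eilmacob}: taking the internal nerve in each of its two groupoid directions, the constant bottom row $Q\sb{0} = Q\sb{0} = Q\sb{0}$ shows that $E$ takes the value $Q\sb{0}$ whenever the inner coordinate vanishes, that is $E\sb{\bl 0} = dQ\sb{0}$; in the remaining bidegrees one is reading off the nerve of the reflexive graph $M\sb{1} \rightrightarrows Q\sb{1}$ (which sits over $Q\sb{0}$ in the second groupoid direction), whose only cell lying over $Q\sb{1}$ and not a degeneracy of it is $M\sb{1}$, occurring in bidegree $(1,1)$. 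Hence $E\sb{11} = M\sb{1}$, while $E\sb{\uk} = Q\sb{1}$ for every other $\uk$ with non-zero inner coordinate. That the higher simplices behave as claimed uses that $M$ is a module over $Q$, so that the relevant structure maps are split (Lemma \ref{lem1-eilmacob}) and $\mu \zD \zf = \zf$ by \eqref{eq2-genset}, exactly as in the proof of Corollary \ref{cor1-eilmacob}.

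For the inductive step, assume the stated description for $\EQM{n-1}$. By \eqref{eq3-eilmacob}, $\EQM{n} \in \Gpd\sp{2}(\Gpd\sp{n-2}\clC)$ is, in its two outermost directions, the double nerve of the internal groupoid with object-of-objects the discrete object $\dpp{n-1}Q\sb{0}$, object-of-arrows $(\EQM{n-1})\sb{1}$, and object-of-composable-pairs $(\EQM{n-1})\sb{1}\tiund{\dpp{n-1}Q\sb{1}}(\EQM{n-1})\sb{1}$, with structure maps $\zr\sb{1}\up{n-1}$, $\zf\sb{1}\up{n-1}$ and $\mu\sb{1}\up{n-1}$. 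Applying $\clN\sb{n}$ --- first the two outer nerves coming from \eqref{eq3-eilmacob}, then the $n-2$ inner nerves of $\Gpd\sp{n-2}\clC$ --- one finds three contributions: discreteness of $\dpp{n-1}Q\sb{0}$ forces the value $Q\sb{0}$ whenever the innermost coordinate vanishes; the slot $(\EQM{n-1})\sb{1}$, via the inductive description of $\clN\sb{n-1}\EQM{n-1}$, contributes a single occurrence of $M\sb{1}$ at $(1,\ldots,1)$ and $Q\sb{1}$ elsewhere; and the object-of-objects slot $\dpp{n-1}Q\sb{1}$ contributes $Q\sb{1}$. Collating these gives $E\sb{1\cdots1} = M\sb{1}$, $E\sb{\bl 0} = dQ\sb{0}$, and $E\sb{\uk} = Q\sb{1}$ otherwise, completing the induction.

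The main obstacle is the multi-index bookkeeping: one must line up the two ``new'' groupoid directions introduced in \eqref{eq3-eilmacob} with the $n-1$ directions inherited from $\EQM{n-1}$, so that the inductive hypothesis (a statement about $n-1$ indices) is applied to the correct block of coordinates, and one must verify that the degeneracy-and-retraction arguments of the base case propagate --- that the higher simplices in the directions controlled by $\dpp{n-1}Q\sb{0}$ and $\dpp{n-1}Q\sb{1}$ collapse onto $Q\sb{0}$ respectively $Q\sb{1}$. This again rests on $\EQM{n-1}$ being a module (so that $\zr\up{n-1}$ is split) and on $\dpp{n-1}Q\sb{0}$ being discrete; the rest is a routine unwinding of \eqref{eq3-eilmacob}.
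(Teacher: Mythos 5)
Your proof is correct and follows essentially the same route as the paper: induction on $n$, with the base case read off from the explicit form of $\EQM{2}$ in \eqref{eq1-eilmacob}, and the inductive step resting on the identification $E\sb{\oset{n}{1\ldots 1}}=(\clN\sb{n-2}(\EQM{n-1})\sb{1})\sb{\oset{n-2}{1\ldots 1}}=(\clN\sb{n-1}\EQM{n-1})\sb{\oset{n-1}{1\ldots 1}}=M\sb{1}$ coming from \eqref{eq3-eilmacob}. The paper dismisses the remaining entries as ``straightforward by the definition of $\EQM{n}$''; you spell out that bookkeeping (discreteness of $\dpp{n-1}Q\sb{0}$ forcing the value $Q\sb{0}$ when the inner index vanishes, and the $\dpp{n-1}Q\sb{1}$ slot contributing $Q\sb{1}$), which is a harmless and accurate elaboration of the same argument.
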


\begin{proof}
  By induction on $n$. When \w[,]{n=2} this holds by definition of \w[.]{\EQM{2}}
Suppose by induction that this holds for \w[.]{k\leq n-1} By definition of
\w{\EQM{n}} we have
$$
  E\sb{\oset{n}{1\ldots 1}}=
  (\clN\sb{n-2}(\EQM{n-1})\sb{1})\sb{\oset{n-2}{1\ldots 1}}=
  \clN\sb{n-1}(\EQM{n-1})\sb{\oset{n-1}{1\ldots 1}}\;.
$$
Since by induction hypothesis
\w[,]{(\clN\sb{n-1}\EQM{n-1})\sb{\oset{n-1}{1\ldots 1}}=M\sb{1}}
it follows that \w[.]{E\sb{\oset{n}{1\ldots 1}}=M\sb{1}}
The rest is straightforward by the definition of \w[.]{\EQM{n}}
\end{proof}

Below is a picture of the corner of \w[.]{\clN\sb{3}\EQM{3}}
\begin{equation*}
\resizebox{5cm}{!}{
\begin{tikzcd}[ampersand replacement=\&]
	\&\& {M\sb{1}} \&\&\& {Q\sb{1}} \\
	{Q\sb{0}} \&\&\& {Q\sb{0}} \\
	\\
	\&\& {Q\sb{1}} \&\&\& {Q\sb{1}} \\
	{Q\sb{0}} \&\&\& {Q\sb{0}}
	\arrow[shift left=1, from=1-3, to=1-6]
	\arrow[from=1-3, to=1-6]
	\arrow[shift left=1, from=1-6, to=1-3]
	\arrow[shift right=1, from=1-3, to=4-3]
	\arrow[from=1-3, to=4-3]
	\arrow[shift right=1, from=4-3, to=1-3]
	\arrow[shift right=1, from=1-6, to=4-6]
	\arrow[from=1-6, to=4-6]
	\arrow[shift right=1, from=4-6, to=1-6]
	\arrow[shift left=1, no head, from=4-3, to=4-6]
	\arrow[shift right=1, no head, from=4-3, to=4-6]
	\arrow[shift right=1, from=1-3, to=2-1]
	\arrow[from=1-3, to=2-1]
	\arrow[shift right=1, from=2-1, to=1-3]
	\arrow[shift right=1, from=4-3, to=5-1]
	\arrow[from=4-3, to=5-1]
	\arrow[shift right=1, from=5-1, to=4-3]
	\arrow[shift right=1, no head, from=2-1, to=5-1]
	\arrow[shift left=1, no head, from=2-1, to=5-1]
	\arrow[shift right=1, from=1-6, to=2-4]
	\arrow[from=1-6, to=2-4]
	\arrow[shift right=1, from=2-4, to=1-6]
	\arrow[shift left=1, no head, from=2-4, to=5-4]
	\arrow[shift right=1, no head, from=2-4, to=5-4]
	\arrow[shift right=1, no head, from=2-1, to=2-4]
	\arrow[shift left=1, no head, from=2-1, to=2-4]
	\arrow[shift right=1, no head, from=5-1, to=5-4]
	\arrow[shift left=1, no head, from=5-1, to=5-4]
	\arrow[shift right=1, from=4-6, to=5-4]
	\arrow[from=4-6, to=5-4]
	\arrow[shift right=1, from=5-4, to=4-6]
\end{tikzcd}}
\end{equation*}
\begin{corollary}\label{cor2-eilmacob}
For \w{n\geq 3} and \w{E\sb{\clC}\up{n}(Q,M)} as above,
\w[.]{\p{n-1}E\sb{\clC}\up{n}(Q,M)=\dpp{n-1}Q}
\end{corollary}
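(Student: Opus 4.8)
The plan is to run the same argument as in the proof of Corollary \ref{cor1-eilmacob}, using the retraction of \w{\EQM{n}} onto a discretised copy of $Q$ supplied by \S \ref{sago}. Recall from \S \ref{sago} that \w{\EQM{n}} carries maps \w{\zr\up{n}\colon\EQM{n}\to\dpp{n}Q} and \w[,]{\zf\up{n}\colon\dpp{n}Q\to\EQM{n}} with \w[,]{\zr\up{n}\zf\up{n}=\Id} and that — by the way these are constructed — they restrict to the identity on the object-of-objects in the single groupoid direction that \w{\p{n-1}} contracts; this is exactly what the equalities \w{\zr\sb{\ast0}\up{n}=\zf\sb{\ast0}\up{n}=\Id} encode. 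Fixing a multidegree \w[,]{(k\sb{1},\ldots,k\sb{n-1})\in\Dmenop} one therefore obtains a retraction of internal groupoids in \w{\clC} which is the identity on objects, so Lemma \ref{lem1-eilmacob} applies and gives \w[.]{p\bigl((\EQM{n})\sb{k\sb{1}\cdots k\sb{n-1}}\bigr)\cong p\bigl((\dpp{n}Q)\sb{k\sb{1}\cdots k\sb{n-1}}\bigr)} Assembling these isomorphisms over all multidegrees yields \w[.]{\p{n-1}\EQM{n}\cong\p{n-1}\dpp{n}Q}

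The remaining step is to evaluate \w[.]{\p{n-1}\dpp{n}Q} This is where the hypothesis \w{n\geq 3} is used: the direction contracted by \w{\p{n-1}} is a \emph{discrete} direction of \w[,]{\dpp{n}Q} so taking isomorphism classes along it merely deletes that direction and leaves \w[,]{\dpp{n-1}Q} giving \w[.]{\p{n-1}\EQM{n}=\dpp{n-1}Q} (For \w{n=2} this last step fails: there \w{\p{1}} contracts the native groupoid direction of $Q$ itself and so produces \w{pQ} rather than $Q$, which is exactly why Corollary \ref{cor1-eilmacob} reads \w{\dd(pQ)} and why the present statement is restricted to \w[).]{n\geq 3}) As a sanity check, the same answer can also be read directly off the explicit form of \w{\clN\sb{n}\EQM{n}} in Lemma \ref{lem3-eilmacob}: forming the coequalizer of the two contracted face maps sends the one distinguished entry $M\sb{1}$ to $Q\sb{1}$ — the module structure supplies the section needed for Lemma \ref{lem1-eilmacob} — while every $Q\sb{1}$-entry and $Q\sb{0}$-entry is unchanged, and the resulting multinerve is precisely that of \w[.]{\dpp{n-1}Q}

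I expect the only real work to be bookkeeping: pinning down which groupoid direction \w{\p{n-1}} contracts inside the iterated construction \wref{eq3-eilmacob} of \w[,]{\EQM{n}} checking that the pair \w{(\zr\up{n},\zf\up{n})} of \S \ref{sago} genuinely restricts to an identity-on-objects retraction in every multidegree, and confirming that this direction sits among the discrete directions of \w[.]{\dpp{n}Q} With those points settled, the argument is formal, Lemma \ref{lem1-eilmacob} carrying out the essential step just as it does in Corollary \ref{cor1-eilmacob}.
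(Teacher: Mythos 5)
The paper's proof does \emph{not} go through the global retraction $\zr\up{n},\zf\up{n}\colon\EQM{n}\rightleftarrows\dpp{n}Q$ at all. It works entirely from the local description of the multinerve in Lemma~\ref{lem3-eilmacob}: from that lemma one sees directly that applying $\p{n-1}$ to $\EQM{n}$ comes down to forming the coequalizer of $M\sb{1}\rightrightarrows Q\sb{1}$ (and of $\pro{M\sb{1}}{Q\sb{0}}{k}\rightrightarrows\pro{Q\sb{1}}{Q\sb{0}}{k}$ for $k\geq 2$); these are computed to be $Q\sb{1}$ (resp.\ $\pro{Q\sb{1}}{Q\sb{0}}{k}$) by producing explicit identity-on-objects split maps $M\sb{1}\rightleftarrows Q\sb{1}$ in $\GC$ and invoking Lemma~\ref{lem1-eilmacob}. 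In other words, the ``sanity check'' paragraph at the end of your proposal \emph{is} the paper's proof, and your lead argument is a genuinely different route.

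That said, your main route is more fragile than you suggest, and the gap you flag as ``bookkeeping'' is actually the crux. You want to factor the argument as (i) $\p{n-1}\EQM{n}\cong\p{n-1}\dpp{n}Q$ by applying Lemma~\ref{lem1-eilmacob} in the direction contracted by $\p{n-1}$, then (ii) $\p{n-1}\dpp{n}Q=\dpp{n-1}Q$. Both steps hinge on the same unverified claim: that the direction collapsed by $\p{n-1}$ is one along which $\zr\up{n}$ is identity-on-objects \emph{and} which is discrete in $\dpp{n}Q$. But the equalities $\zr\sb{*0}\up{n}=\zf\sb{*0}\up{n}=\Id$ supplied in \S\ref{sago} assert the identity on a \emph{specific} two-index slice of the inductive construction \eqref{eq3-eilmacob}, and it is not a priori that slice which $\p{n-1}$ contracts. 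The contrast with the $n=2$ case that you note is exactly symptomatic of this: there $\p{1}$ does land on the native groupoid direction of $Q$ and produces $\dd(pQ)$, so one must say precisely why the contracted direction ``moves off'' that direction once $n\geq 3$ before step (ii) can be asserted. Until that index-tracing is carried out, your lead argument is incomplete, whereas the paper's local computation via Lemma~\ref{lem3-eilmacob} sidesteps the question entirely because it never appeals to $\zr\up{n}$ globally: it only uses the split maps $M\sb{1}\rightleftarrows Q\sb{1}$ where Lemma~\ref{lem3-eilmacob} already tells you the groupoid being collapsed has objects $Q\sb{1}$. I would therefore promote your ``sanity check'' paragraph to be the proof, or else fill in the direction-tracking carefully; as written the main argument rests on an assertion the cited formulas do not yet establish.
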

\begin{proof}
From the description of the multinerve of \w{\EQM{n}} given in
Lemma \ref{lem3-eilmacob} it is enough to show that the coequalizer of
$$
\xymatrix{
M\sb{1} \ar@<0.5ex>[r] \ar@<-0.5ex>[r] & Q\sb{1}
}
$$
is given by \w[,]{Q\sb{1}} and for each \w[,]{k\geq 2} the coequalizer of
\begin{equation*}
\xymatrix{
\pro{M\sb{1}}{Q\sb{0}}{k}\ar@<0.5ex>[r] \ar@<-0.5ex>[r] &
\pro{Q\sb{1}}{Q\sb{0}}{k}
}
\end{equation*}
is given by \w[.]{\pro{Q\sb{1}}{Q\sb{0}}{k}} Since there are split maps
\begin{equation*}
\xymatrix{
M\sb{1} \ar@<0.5ex>[r]\ar@<-0.8ex>[d]\ar@<0ex>[d] & Q\sb{1} \ar@<0.5ex>[l]\ar@{=}[d]\\
Q\sb{1} \ar@<-0.8ex>[u] \ar@{=}[r] & Q\sb{1}
}
\end{equation*}
in \w[,]{\GC}, we have split maps
\begin{equation*}
\xymatrix@C=40pt@R=40pt{
  \pro{M\sb{1}}{Q\sb{0}}{k} \ar@<0.5ex>[r]\ar@<-0.8ex>[d]\ar@<0ex>[d] &
  \pro{Q\sb{1}}{Q\sb{0}}{k} \ar@<0.5ex>[l]\ar@{=}[d]\\
\pro{Q\sb{1}}{Q\sb{0}}{k} \ar@<-0.8ex>[u] \ar@{=}[r] & \pro{Q\sb{1}}{Q\sb{0}}{k}
}
\end{equation*}
by Lemma \ref{lem1-eilmacob}.
\end{proof}

\begin{lemma}\label{lem4-eilmacob}
Let \w{f:W\rw Q} be a map in \w{\GC} and let $M$ be a $Q$-module.
Let \w{f\sp{\ast}M} be the $W$-module given by the pullback
$$
\xymatrix{
f\sp{\ast} M \ar[r]\ar[d] & M \ar[d]\\
W\ar[r]\sb{f} & Q
}
$$
in \w[.]{\funcat{}{\clC}} Then there is a pullback
\begin{myeq}\label{eq4-eilmacob}
\xymatrix{
E\up{n}\sb{\clC}(W,f\sp{\ast}M) \ar[r]\ar[d] & E\up{n}\sb{\clC}(Q,M) \ar[d]\\
\dpp{n}W \ar[r]\sb{\dpp{n}f} & \dpp{n}Q
}
\end{myeq}
in \w[.]{\funcat{n}{\clC}}
\end{lemma}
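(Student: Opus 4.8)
The plan is to proceed by induction on $n$, following the inductive construction of $\EQM{n}$ given in \S\ref{sago}, and to verify that the pullback square \wref{eq4-eilmacob} can be assembled level by level from the analogous square one dimension down. First I would treat the base case $n=2$ directly: here $\EQM{2}$ has the explicit form \wref{eq1-eilmacob}, with $M\sb{1}\oset{\zr\sb{1}}{\times}M\sb{1}$, $M\sb{1}$, and $Q\sb{1}$ in the simplicial degrees $\geq 1$ and $Q\sb{0}$ in degree $0$. Applying the pullback-defining property of $f\sp{\ast}M$ in $\funcat{}{\clC}$ componentwise, together with the fact that pullbacks in $\funcat{n}{\clC}$ are computed objectwise, one checks that each entry of $\clN\sb{2}\EM{2}{W}{f\sp{\ast}M}$ is the pullback of the corresponding entry of $\clN\sb{2}\EQM{2}$ over $\dpp{2}W\to\dpp{2}Q$; the compatibility of the face and degeneracy maps is immediate since they are induced from those of $W$, $Q$, and $M$, all of which are compatible with $f$ and with the chosen sections.

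For the inductive step, assume \wref{eq4-eilmacob} holds with $n$ replaced by $n-1$, for every object and module, and in particular for the $Q\sb{1}$-module $M\sb{1}$ and the map $f\sb{1}:W\sb{1}\to Q\sb{1}$. Recall from \S\ref{sago} that $\EQM{n}$ is built as a $\Gpd\sp{2}$-object whose degree-$1$ (in the first new groupoid direction) part is $(\EQM{n-1})\sb{1}\tiund{\dpp{n-1}Q\sb{1}}(\EQM{n-1})\sb{1}$ and whose degree-$0$ part is $\dpp{n-1}Q\sb{0}$, with the structure maps $\zr\sb{1}\up{n-1}$, $\zf\sb{1}\up{n-1}$, $\mu\sb{1}\up{n-1}$. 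Under the multinerve $\clN\sb{n}$ this unwinds (by Lemma \ref{lem3-eilmacob} and Remark \ref{rem01}) into a diagram each of whose entries is either an entry of $\clN\sb{n-1}(\EQM{n-1})$ or an iterated fibre product of such entries over entries of $\dpp{n-1}Q\sb{1}$ or $\dpp{n-1}Q\sb{0}$. I would then invoke the induction hypothesis entrywise — using that taking pullbacks commutes with the fibre products appearing in the Segal-type decomposition, since pullbacks commute with pullbacks — to conclude that $\clN\sb{n}\EM{n}{W}{f\sp{\ast}M}$ is, entrywise and compatibly with all structure maps, the pullback of $\clN\sb{n}\EQM{n}$ along $\dpp{n}W\to\dpp{n}Q$. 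Since the multinerve is fully faithful, this gives the pullback square \wref{eq4-eilmacob} in $\funcat{n}{\clC}$.

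The routine but slightly delicate point is bookkeeping: one must check that under the identifications of Remark \ref{rem01} the ``$(\,\cdot\,)\sb{1}$'' used in the construction of $\EQM{n}$ matches the ``$(\,\cdot\,)\sb{1}$'' for which the induction hypothesis is being applied (i.e.\ that $(E\sb{\clC}\up{n-1}(Q,M))\sb{1}=E\sb{\clC}\up{n-1}(Q\sb{1},M\sb{1})$ as $\Gpd\sp{n-1}\clC$-objects, and similarly for $W$), and that the map $f\sb{1}:W\sb{1}\to Q\sb{1}$ is the one relating the two modules. This is the main obstacle in the sense that it is the only place real care is needed; everything else is the formal observation that a limit of pullback squares is again a pullback square. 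Once this matching is set up, the verification that the face/degeneracy maps and the abelian-group structure maps $\mu\up{n}$, $i\up{n}$, $\zf\up{n}$ on the two sides are compatible with the projection to the base is automatic, since all of them are defined in \S\ref{sago} by the same formulas applied to $W$ or $Q$, and $f$ intertwines those formulas by hypothesis.
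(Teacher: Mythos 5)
The paper's proof is not an induction: it directly observes that every entry of $\clN\sb{n}E\up{n}\sb{\clC}(Q,M)$ is $Q\sb{0}$, $Q\sb{1}$, $M\sb{1}$, or a fibre product of these (Lemma \ref{lem3-eilmacob}), so that pulling back objectwise along $\dpp{n}f$ replaces $Q\sb{i}$ by $W\sb{i}$ and $M\sb{1}$ by $(f\sp{\ast}M)\sb{1}$ in every entry, which is exactly the multinerve of $E\up{n}\sb{\clC}(W,f\sp{\ast}M)$. Your inductive reorganization of this is in principle fine, but as written your inductive step contains a genuine error in how the hypothesis is invoked. You propose to apply the $(n-1)$-case to ``the $Q\sb{1}$-module $M\sb{1}$ and the map $f\sb{1}\colon W\sb{1}\to Q\sb{1}$'', and you lean on the identity $(E\sb{\clC}\up{n-1}(Q,M))\sb{1}=E\sb{\clC}\up{n-1}(Q\sb{1},M\sb{1})$. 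But $Q\sb{1}$ is an object of $\clC$, not of $\GC$, and $M\sb{1}$ is not an abelian group object in $(\GC,Q\sb{0})/Q\sb{1}$; the construction $E\up{n-1}\sb{\clC}(-,-)$ only accepts a groupoid object of $\GC$ together with a module over it as input, so the right-hand side of your identity is undefined and there is no $(n-1)$-instance of the Lemma for the triple $(Q\sb{1},M\sb{1},f\sb{1})$ to invoke.

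The repair is to apply the inductive hypothesis to the \emph{same} $f\colon W\to Q$ and the \emph{same} $M$, which gives the pullback square relating $E\up{n-1}\sb{\clC}(W,f\sp{\ast}M)$ and $E\up{n-1}\sb{\clC}(Q,M)$ over $\dpp{n-1}W\to\dpp{n-1}Q$. Then apply the evaluation-at-degree-one functor $(\,\cdot\,)\sb{1}$, which preserves limits and hence the pullback, to all four corners, and feed the result into the defining diagram \wref{eq3-eilmacob}, using (as you correctly say) that the fibre products appearing there also commute with pulling back over $\dpp{n-1}f$. This closes the induction and recovers the paper's conclusion, at the cost of being considerably longer than the paper's direct substitution argument.
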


\begin{proof}
The pullback \wref{eq4-eilmacob} is obtained by replacing \w{Q\sb{i}} by
\w{W\sb{i}} \wb[,]{i=0,1} and \w{M\rw Q} with \w{f\sp{\ast}M\rw W} in the
definition of \w[.]{E\up{n}\sb{\clC}(Q,M)} By construction, this yields
\w[.]{E\up{n}\sb{\clC}(W,f\sp{\ast}M)}
\end{proof}

We finally specialize our constructions to the case \w[.]{\clC=\Cato}
In what follows we abbreviate \w{E\up{n}\sb{\Sz{\Cato}}(Q,M)} by \w[.]{\EQM{n}}

\begin{proposition}\label{pro1-eilmacob}
For each \w{n\geq 2} and \w{E\up{n}(Q,M)} be as above:
\begin{itemize}
\item [a)] \w{E\up{n}(Q,M)} is in \w[.]{\hy{\Gpd\sp{n}}{\Cato}}
\item [b)] \w{E\up{n}(Q,M)} is an abelian group object in
\w[.]{(\hy{\Gpd\sp{n}}{\Cato,\dpp{n-1}Q})/\dpp{n}Q}
\item [c)] \w{E\up{n}(Q,M)} is an Eilenberg-Mac~Lane object in
\w[.]{\hy{\Gpd\sp{n}}{\Cato}}
\end{itemize}
\end{proposition}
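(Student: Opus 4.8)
The plan is to read parts (a) and (b) off the inductive construction of \w{\EQM{n}} in \S\ref{sago}, and to deduce (c) from the truncation computations of Corollaries \ref{cor1-eilmacob} and \ref{cor2-eilmacob} together with the multinerve description of Lemma \ref{lem3-eilmacob}. For (a) and (b): the construction in \S\ref{sago} uses only that \w{\clC} have finite products and pullbacks, and in carrying it out we have already verified that \w{\EQM{n}} lies in \w{\GnC{n}} and is an abelian group object in \w[.]{(\Gpd\sp{n}\clC,\dpp{n-1}Q)/\dpp{n}Q} The category \w{\Cato} has finite products and pullbacks (computed on morphisms, with the object set held fixed at \w[),]{\clO} so specializing \w{\clC=\Cato} yields \w[,]{E\up{n}(Q,M)\in\Gpdn{n}(\Cato)} and the isomorphism \wref{eq1-comores} identifies \w{\Gpdn{n}(\Cato)} with \w[,]{\hCo{\Gpdn{n}}=\hy{\Gpd\sp{n}}{\Cato}} which proves (a). Since the structure maps \w[,]{\mu\up{n}} \w[,]{i\up{n}} \w[,]{\zr\up{n}} \w{\zf\up{n}} are assembled from \w{M\rw Q} and \w[,]{Q\sb{0}\rw Q} which are morphisms of \w[,]{\Cato} they are the identity on \w[;]{\clO} this gives (b).

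For (c), recall that an Eilenberg-Mac~Lane object of degree $n$ for the $Q$-module $M$ is an abelian group object $E$ over $Q$ in \w{\hy{\Gpd\sp{n}}{\Cato}} (in the sense of (b)) which models the $n$-fold delooping of $M$ relative to $Q$; concretely, this reduces to requiring that the \wwb{n-1}truncation \w{\p{n-1}E} be the constant object \w[,]{\dpp{n-1}Q} and that the residual top-dimensional homotopy datum of $E$, regarded as a module over \w[,]{\p{n-1}E} be \w[.]{M} Parts (a) and (b) supply the abelian group object structure over $Q$. The truncation condition is Corollary \ref{cor1-eilmacob} when \w{n=2} \w[,]{(\p{1}E\up{2}(Q,M)=\dd(pQ))} and Corollary \ref{cor2-eilmacob} when \w{n\geq 3} \w[.]{(\p{n-1}E\up{n}(Q,M)=\dpp{n-1}Q)} Finally the top-dimensional datum is \w{M} by construction, and Lemma \ref{lem3-eilmacob} confirms that the multinerve of \w{E\up{n}(Q,M)} agrees with that of \w{\dpp{n}Q} away from multidegree \w[,]{(\oset{n}{1\ldots1})} where it equals \w[;]{M\sb{1}} tracing the face maps out of that multidegree recovers the map \w{M\rw Q} and the multiplication \w[,]{\mu:M\oset{\zr}{\times}M\rw M} i.e.\ the $Q$-module structure on $M$. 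Assembling these, \w{E\up{n}(Q,M)} is an Eilenberg-Mac~Lane object; equivalently, diagram \wref{eq3-eilmacob} presents it as an $n$-fold delooping of $M$ built up one groupoid direction at a time from \w[,]{E\up{2}(Q,M)} and the same characterization applies.

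I expect the main difficulty to be pinning down the precise meaning of ``Eilenberg-Mac~Lane object'' in \w{\hy{\Gpd\sp{n}}{\Cato}} and matching it to the algebraically defined \w[:]{E\up{n}(Q,M)} one has to relate the algebraic truncation functor \w{\p{n-1}} and the notion of ``top homotopy module'' to the genuine Postnikov data of the simplicial category obtained by realizing \w[,]{E\up{n}(Q,M)} which is exactly where the comparison results of \cite{PaolS}, \cite{BP} and (for the base case) \cite{BP2019} are needed. The remaining work is bookkeeping over the inductive construction of \S\ref{sago}.
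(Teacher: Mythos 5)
Your proposal is correct and follows essentially the same route as the paper: parts (a) and (b) are read off the inductive construction of \S\ref{sago}, and part (c) is deduced from the multinerve description of Lemma~\ref{lem3-eilmacob}, with the meaning of ``Eilenberg-Mac~Lane object'' pinned down by the reference to \cite[1.3(iv)]{DKSmitO} --- exactly the point you correctly flag as the one requiring care. The additional detail you supply via Corollaries~\ref{cor1-eilmacob} and \ref{cor2-eilmacob} is consistent with, and implicit in, the paper's two-line argument.
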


\begin{proof}
(a) and (b) follow from the construction in \S \ref{sago}, while (c) follows from
Lemma \ref{lem3-eilmacob} (see \cite[1.3(iv)]{DKSmitO}).
\end{proof}

%
%c4  Cohomology of $n$-track categories and comonad cohomology
%
\sect{Andr\'{e}-Quillen cohomology of $n$-track categories}
\label{ccohtrk}

We are finally in a position to state and prove our main result, which identifies
the Andr\'{e}-Quillen cohomology groups of an $n$-track category \wh and thus those
of an \wwb{\infty,1}category in dimensions $\leq n$ \wh with a certain \w{algebraic}
cohomology defined in terms of an explicit cosimplicial abelian group
(or cochain complex) constructed through our comonad resolution.

This is deduced from a certain long exact sequence, extending that obtained in
\cite[\S 5]{BP2019} (see \wref{eqlongseq} below).

\begin{mysubsection}{Andr\'{e}-Quillen cohomology}
\label{sdkcohntc}
For \w{X\in\Trk{n}} and $M$ a module over \w[,]{\p{1}X\in\Trt}
let \w{\EX{M}{n}} be the corresponding twisted Eilenberg-Mac~Lane
$\SO$-category (see \cite[\S 1.3(iv)]{DKSmitO}). The \emph{Andr\'{e}-Quillen cohomology}
of $X$ with coefficients in $M$ is given by
$$
\HAQ{n-i}(\dI X,M)~:=~
\pi\sb{i}\,\map\sb{\SOC/\dI X}(\Diag\ovl{\clF}\sb{\sbl}\dI X,\,\EX{M}{n})~,
$$
where \w{\Diag\ovl{\clF}\sb{\sbl}\dI X\to\dI X} is the Dwyer-Kan standard free
resolution.  As noted in the introduction, this was originally introduced in
\cite{DKanO,DKSmitO}, where it was termed $\SO$-\emph{cohomology}, but in light of
\cite[Proposition 2.3]{HNPrasA}, we shall stick with the more common used name of
\emph{Andr\'{e}-Quillen cohomology}.

Since by Corollary \ref{ccofrepl}, \w{\ovl{\Diag}\ckb X\rw IX} is
a cofibrant replacement, as in the proof of \cite[Proposition 4.8]{BP2019}
we deduce that
\begin{myeq}\label{eq1-longseq}
\HAQ{n-i}(\dI X,M)~=~\pi\sb{i}\,\map\sb{\SOC/\dI X}(\Diag\ckb X,\EX(M,n))~.
\end{myeq}
\end{mysubsection}

\begin{lemma}\label{rem1-longseq}
For any \w[,]{X\in\Cato} \w{\clL\sb{n} X} is a homotopically discrete track
category with \w[.]{\p{0}\clL\sb{n} X=X} Moreover, for each \w{Y\in\Trk{n}} and
\w[,]{\ur\in\Dnop} \w{(\ck Y)\sb{\ur}} is a free category.
\end{lemma}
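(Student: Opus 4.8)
The plan is to prove the two assertions by separate inductions on $n$.

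For the first assertion I would induct on $n$, the case $n=0$ being trivial. Recall that $\clL\sb{n}=L\sb{[n-1]}\circ\cdots\circ L\sb{[0]}$ is the composite of the left adjoints of Lemma \ref{lem1-comores}, so that $\clL\sb{n}=L\sb{[n-1]}\circ\clL\sb{n-1}$ with $L\sb{[n-1]}$ acting on hom-objects through the functor $\elll{n-1}$. By the proof of Lemma \ref{lem1-comores}, for $Z\in\Gpdwg{n-1}$ the object $\elll{n-1}Z=H(Z\sb{s}\cop Z\sb{t}\rightleftarrows Z)$ is a levelwise equivalence relation with $\p{n-1}\elll{n-1}Z=Z$. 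Hence each hom-object of $\clL\sb{n}X$ is a levelwise equivalence relation whose $\p{n-1}$-truncation is the corresponding hom-object of $\clL\sb{n-1}X$, which by the inductive hypothesis lies in $\cathd{n-1}$; by Definition \ref{def01} each hom-object of $\clL\sb{n}X$ then lies in $\cathd{n}$, so $\clL\sb{n}X$ is a homotopically discrete $n$-track category. Since $\p{n-1}$ is computed levelwise and $\p{n-1}\elll{n-1}=\Id$, we get $\p{n-1}\clL\sb{n}X=\clL\sb{n-1}X$; iterating this identity gives $\p{0}\clL\sb{n}X=\p{0}\cdots\p{n-1}\clL\sb{n}X=\p{0}\cdots\p{n-2}\clL\sb{n-1}X=X$ by the inductive hypothesis.

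For the second assertion, recall from \eqref{eq6-comores} that $\ck Y=\clL\sb{n}FVU\sb{n}Y$, and note that $W:=FVU\sb{n}Y$ is the free category on the graph $\zG:=VU\sb{n}Y$. It therefore suffices to prove that $(\clL\sb{n}W)\sb{\ur}$ is a free category for every free $W\in\Cato$ and every $\ur\in\Dnop$, and I would again argue by induction on $n$, the case $n=0$ being immediate. The case $n=1$ is \cite{BP2019}: using the explicit form of $\elll{0}$ one identifies $(\clL\sb{1}W)\sb{r}$ with the free category on the graph whose edges are the pairs consisting of an edge of $\zG$ and a sequence of length $r+1$ in $\{s,t\}$, and in particular it is free. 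For the inductive step one applies $L\sb{[n-1]}$ to $\clL\sb{n-1}W$: the description of $\elll{n-1}$ shows that every multinerve degree of $\elll{n-1}Z$ is a finite iterated fibre product of copies of $Z$ along fold maps, hence, by distributivity, a finite coproduct of copies of $Z$. Combined with the inductive hypothesis for the component categories of $\clL\sb{n-1}W$, and with the fact that free categories in $\Cato$ are closed under the (co)products involved, the unique-factorisation argument of the case $n=1$ carries over one enrichment level up, and one concludes that $(\clL\sb{n}W)\sb{\ur}$ is again free -- concretely, the free category on $\zG$ with each edge replicated a suitable finite number of times.

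The step I expect to be the main obstacle is the inductive step just described: one must pin down precisely what the category $(\clL\sb{n}W)\sb{\ur}$ is -- in particular, how the enriched composition of $\clL\sb{n}W$ interacts with the coproduct decomposition of its hom-objects supplied by $\elll{n-1}$ -- and then verify that its morphisms factor \emph{uniquely} into generators, which is exactly what freeness demands. At enrichment level one this is already the content of the corresponding lemma in \cite{BP2019}; passing to the $n$-fold situation adds only the bookkeeping of one more simplicial direction at each stage, no new relations being introduced.
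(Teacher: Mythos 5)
Your proof is correct and follows essentially the same route as the paper: induction on $n$ with the base case imported from \cite{BP2019}, and the inductive step resting on the explicit form $\elll{n-1}Z=H(Z\sb{s}\cop Z\sb{t}\rightleftarrows Z)$, whose levels are coproducts of copies of $Z$ (hence free categories, and a levelwise equivalence relation with $\p{n-1}$-quotient $Z$). The only difference is cosmetic \wh you repackage the second claim as a statement about $\clL\sb{n}W$ for free $W$, and you worry about re-running a unique-factorisation argument, which is unnecessary once the coproduct decomposition of the levels of $H$ (the fact the paper cites from \cite[\S 2.2]{BP2019}) is in hand.
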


\begin{proof}
By induction on $n$: for the case \w[,]{n=1} see \cite[Remark 4.4]{BP2019}.
Suppose the claim holds for \w[.]{n-1} By definition of $\ck$,
\w[.]{\ck Y=L\sb{[n-1]}Z=H(Z\cop Z\rightleftarrows Z)}
Thus \w[.]{(\ck Y)\sb{\ur}=[H(Z\cop Z\rightleftarrows Z)]\sb{\ur}}
By the induction hypothesis, \w{Z\sb{\ur}} is a free category and by the form of
$H$ (see \cite[\S 2.2]{BP2019}), \w{(\ck Y)\sb{\ur}} is given by coproducts of
copies of \w[,]{Z\sb{\ur}} and is therefore a free category.
\end{proof}

\begin{proposition}\label{pro1-longseq}
Given \w{X\in\Trk{n}} a module $M$ over \w[,]{\p{1}X\in\Trt} and \w[,]{s\geq 0}
The $s$-th cohomology group \w{\HAQ{s}(\dI X;M)} is isomorphic to
\w[,]{\pi\sp{s} C\sp{\sbl}} where \w{C\sp{\sbl}} is the cosimplicial abelian group
\w[.]{\pi\sb{1}\map\sb{\SOC/X}(\odI\ckb X,M)}
\end{proposition}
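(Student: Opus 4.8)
The plan is to identify the function complex $\map_{\SOC/\dI X}(\Diag\ckb X,\EX{M}{n})$, which by \eqref{eq1-longseq} computes the Andr\'{e}-Quillen cohomology, with $\map_{\SOC/X}(\odI\ckb X, M)$ up to a shift, and then to read off homotopy groups degreewise. The key point is that the mapping space into a (twisted) Eilenberg--Mac~Lane object can be computed in each simplicial (resolution) degree, and in degree $m$ it reduces to a mapping space into $\EQM{n}$ in the category $\hy{\Gpd\sp{n}}{\Cato}$, which by Proposition \ref{pro1-eilmacob}(c) is an Eilenberg--Mac~Lane object. First I would use that $\Diag$ is the diagonal of the multinerve, so that a map $\Diag\ck_m X\to\EX{M}{n}$ of $\SO$-categories, lying over $\dI X$, is the same datum as a map of $n$-fold simplicial (actually $\Gpd\sp n$-enriched) objects $\ck_m X\to E\sp{(n)}(Q,M)$ over $\dpp{n}Q$ — this is essentially an adjunction between $\Diag$ and the appropriate ``fattening'' functor, combined with Lemma \ref{lem4-eilmacob}, which says the twisted Eilenberg--Mac~Lane object $\EX{M}{n}$ pulls back from $\EQM{n}$ along the structure map to $\dpp n\p 1 X$.

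Next I would invoke Lemma \ref{rem1-longseq}: since each $(\ck_m X)_{\ur}$ is a free category and $\ck_m X$ is built from free/homotopically discrete pieces, the object $\Diag\ckb X$ is a cofibrant (free) $\SO$-category, so $\map_{\SOC/\dI X}(\Diag\ckb X,\EX{M}{n})$ is a homotopically meaningful mapping space, and it is the diagonal (or totalization) of the cosimplicial space $[m]\mapsto \map(\Diag\ck_m X,\EX{M}{n})$. In each cosimplicial degree $m$, mapping a free object into an Eilenberg--Mac~Lane object of ``type $n$'' produces a space whose only nonvanishing homotopy group is $\pi_n$; more precisely, because $\EQM{n}$ has the single nontrivial entry $M_1$ in multidegree $(1,\dots,1)$ (Lemma \ref{lem3-eilmacob}) sitting over copies of $Q_1$, the mapping space $\map(\Diag\ck_m X,\EX{M}{n})$ over $\dI X$ is homotopy equivalent to an Eilenberg--Mac~Lane space $K(A_m, n)$ with $A_m=\pi_1\,\map_{\SOC/X}\!\big((\odI\ck_m X),M\big)$ — the $\pi_1$ here being the abelian group structure coming from $M$ being an abelian group object, and the shift by $n$ coming from the $n$ collapsed groupoid directions in $\dI$ versus the single ``category'' direction retained in $\odI$.

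Finally I would assemble these degreewise identifications across the cosimplicial direction. Running the $\pi_i$ of the totalization against the Bousfield--Kan spectral sequence (or simply the fact that $\map(\Diag\ckb X, K(A_\bullet,n))$ has homotopy $\pi_{i}= H^{n-i}$ of the cochain complex associated to the cosimplicial abelian group $A\sp\bullet$), and matching $A\sp\bullet$ with $C\sp{\sbl}:=\pi_1\map_{\SOC/X}(\odI\ckb X,M)$, yields
\[
\HAQ{n-i}(\dI X;M)\;=\;\pi_i\,\map_{\SOC/\dI X}(\Diag\ckb X,\EX{M}{n})\;\cong\;\pi\sp{n-i}C\sp{\sbl},
\]
which after reindexing $s=n-i$ is exactly the claim $\HAQ{s}(\dI X;M)\cong\pi\sp s C\sp{\sbl}$. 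The main obstacle I anticipate is the degreewise identification $\map(\Diag\ck_m X,\EX{M}{n})\simeq K(A_m,n)$: one must carefully keep track of how the diagonal functor $\Diag$ interacts with the $n$ groupoid directions of $\EQM{n}$ and verify that collapsing them via $\Diag$ does indeed contribute exactly an $n$-fold loop/delooping shift — this is where Lemma \ref{lem3-eilmacob}, Corollary \ref{cor2-eilmacob}, and the precise form of the Eilenberg--Mac~Lane object as an iterated groupoid must be used, presumably by an induction on $n$ parallel to the one used to build $\EQM n$ in \S\ref{sago}, with the base case $n=1$ handled in \cite{BP2019}.
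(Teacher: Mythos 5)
Your overall architecture is right: use cofibrancy of \w{\Diag\ckb X} from Lemma \ref{rem1-longseq}, view \w{\map\sb{\SOC/\dI X}(\Diag\ckb X,\EX{M}{n})} as the totalization of a cosimplicial space, identify each cosimplicial level as an Eilenberg--Mac~Lane space of the right type, and then read off \w{\HAQ{s}} as \w{\pi\sp{s}C\sp{\sbl}} via the (degenerate) Bousfield--Kan spectral sequence. That skeleton agrees with what the paper does, since the paper's proof is exactly the one from \cite[Theorem~4.9]{BP2019} transported to the $n$-track setting.

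The gap you flag yourself is, however, the whole content of the proposition, and your proposed way of closing it is not the paper's. You want to establish the degreewise identification \w{\map(\dI\ck\sp{s}X,\EX{M}{n})\simeq K(C\sp{s},n)} by an induction on the $n$ groupoid directions of \w{\EQM{n}}, using Lemma~\ref{lem3-eilmacob} and Corollary~\ref{cor2-eilmacob} to track how \w{\Diag} collapses them. The paper instead exploits the \emph{other} half of Lemma~\ref{rem1-longseq}: each \w{\dI\ck\sp{s}X} is not merely free but \emph{homotopically discrete}, so the natural map \w{\dI\ck\sp{s}X\to\Id\,\p{0}\ck\sp{s}X} is a weak equivalence in \w[.]{\SOC} This lets one replace the source by a constant simplicial category on a free category and compute the mapping space into the twisted Eilenberg--Mac~Lane object elementarily (the remaining computation is then literally that of \cite[Theorem~4.9]{BP2019}). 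You cite the homotopical discreteness only in passing, and only as evidence for cofibrancy; you never actually invoke the weak equivalence \w{\dI\ck\sp{s}X\to\Id\,\p{0}\ck\sp{s}X} that does the work. Without that reduction, the ``induction on $n$'' you gesture at would have to control the homotopy type of a mapping space out of a non-discrete, $n$-fold simplicial source, which is precisely the difficulty the paper's use of homotopical discreteness is designed to sidestep. So the proposal is structurally sound and the spectral-sequence bookkeeping is correct, but the crucial degreewise identification remains open in your account, and the mechanism for closing it that the paper actually uses (discretize the source via homotopical discreteness, then apply \cite[Theorem~4.9]{BP2019}) differs from the direct induction you envisage.
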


\begin{proof}
The proof is formally analogous to that of \cite[Theorem 4.9]{BP2019},
using the fact that, by Remark \ref{rem1-longseq}, \w{\dI\ck\sp{s} X} is a cofibrant
\wwb{\SO}category, since it is free in each dimension and the degeneracy
maps take generators to generators. Also by Lemma \ref{rem1-longseq},
\w{\dI\ck\sp{s} X} is homotopically discrete, so
\w{\dI\ck\sp{s} X\to\Id\p{0}\ck\sp{s} X} is a weak equivalence.
The rest of the proof is identical with that of \cite[Theorem 4.9]{BP2019}.
\end{proof}

\begin{lemma}\label{lem1-longseq}
For any map \w{f:A\rw B} in \w{\Cato} with $A$ free and $M$ a $B$-module,
  \w[,]{Z=\clL\sb{n} A} there is an isomorphism
$$
  \pi\sb{n}\map\sb{\SOC/B} (\dI Z,\EB{M}{n})
  =\HAQB{0}(A;M)\cong\Hom\sb{\Sz{\Cato}/B}(A,M\sb{1})\;.
$$
\end{lemma}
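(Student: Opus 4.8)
The plan is to establish the two isomorphisms in turn. The first is obtained by recognizing $\dI Z$ as a cofibrant replacement of the discrete $\SO$-category $A$ over $B$, so that $\map\sb{\SOC/B}(\dI Z,\EB{M}{n})$ computes a derived mapping space and, with the indexing of \S\ref{sdkcohntc} (taking $i=n$), $\pi\sb{n}$ of it is $\HAQB{0}(A;M)$ essentially by definition. The second is a direct computation exploiting that $A$ is free. The whole argument runs parallel to the case $n=1$ of \cite{BP2019}, the genuinely $n$-dimensional inputs being Lemma \ref{rem1-longseq} (which supplies the cofibrant resolution) together with the explicit description of $\EB{M}{n}$ coming from Lemma \ref{lem3-eilmacob} and Proposition \ref{pro1-eilmacob}.

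For the first isomorphism: by Lemma \ref{rem1-longseq}, $Z=\clL\sb{n}A$ is a homotopically discrete track category with $\p{0}Z=A$ and with $Z\sb{\ur}$ a free category for every $\ur\in\Dnop$. As in Corollary \ref{ccofrepl} and the proof of Proposition \ref{pro1-longseq}, it follows that $\dI Z$ is free in each simplicial dimension, with degeneracies carrying generators to generators, hence cofibrant in $\SOC/B$; while the homotopical discreteness makes the augmentation $\dI Z\to\Id\,\p{0}Z=\Id\,A$ a Dwyer--Kan equivalence. Thus $\dI Z$ is a cofibrant replacement of $A$ over $B$, and since $\EB{M}{n}$ is fibrant over $B$, the simplicial set $\map\sb{\SOC/B}(\dI Z,\EB{M}{n})$ computes the derived mapping space out of $A$; by definition of Andr\'{e}--Quillen cohomology (\S\ref{sdkcohntc}), $\pi\sb{n}$ of it is $\HAQB{0}(A;M)$.

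For the second isomorphism I exploit the freeness of $A$: present $A$ as the free $\clO$-category on a graph $\Gamma\in\Gpho$, with structure map $f\colon A\to B$ (see \eqref{eq4-comores}). Since the free--forgetful adjunction between $\Cato$ and $\Gpho$ is compatible with the Dwyer--Kan model structure on $\SOC$, so that it may be applied to the cofibrant object $A$ to compute derived mapping spaces, the space of lifts of $f$ along $\EB{M}{n}\to B$ decomposes as a product over the generating edges of $\Gamma$:
$$
\map\sb{\SOC/B}(A,\EB{M}{n})~\simeq~\prod\sb{e\colon a\to b}\operatorname{hofib}\sb{f(e)}\bigl(\EB{M}{n}(a,b)\to B(a,b)\bigr)\;.
$$
By the Eilenberg--Mac~Lane property of $\EB{M}{n}$ (Proposition \ref{pro1-eilmacob}(c), via the multinerve description of Lemma \ref{lem3-eilmacob} and \cite[1.3(iv)]{DKSmitO}), each homotopy fibre is a $K(M(a,b),n)$; hence the right-hand side is again an Eilenberg--Mac~Lane space, and
$$
\pi\sb{n}\map\sb{\SOC/B}(A,\EB{M}{n})~\cong~\bigoplus\sb{e\colon a\to b}M(a,b)~\cong~\Hom\sb{\Cato/B}(A,M\sb{1})\;,
$$
the last isomorphism being the same adjunction, now on the nose: a morphism $A\to M\sb{1}$ over $B$ is exactly a choice, for each generating edge $e\colon a\to b$ of $\Gamma$, of an element of the fibre $M(a,b)$ of $M\sb{1}(a,b)\to B(a,b)$. (Alternatively, one may first base-change along $f$ by Lemma \ref{lem4-eilmacob}, replacing $B$ by $A$ and $M$ by $f\sp{\ast}M$, and argue over $A$.)

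The main obstacle is the middle step of the second isomorphism: justifying that the derived mapping space out of the free object $A$ splits as the indicated product of homotopy fibres, and identifying those fibres with ordinary Eilenberg--Mac~Lane spaces. The first point requires that the free--forgetful adjunction $\Cato\leftrightarrows\Gpho$ interacts correctly with the Dwyer--Kan model structure, so that applying it to the cofibrant object $A$ genuinely computes derived maps; the second requires the explicit shape of $\EB{M}{n}$ established in \S\ref{cmodule}. Once these inputs are granted, extracting $\pi\sb{n}$ and comparing with $\Hom\sb{\Cato/B}(A,M\sb{1})$ are formal, and the cofibrancy bookkeeping of the first isomorphism follows exactly the pattern of Corollary \ref{ccofrepl} and Proposition \ref{pro1-longseq}.
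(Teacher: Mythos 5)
Your treatment of the first isomorphism coincides with the paper's: both rest on Lemma \ref{rem1-longseq} to see that \w{\dI Z\to A} is a cofibrant replacement (free in each dimension with degeneracies preserving generators, augmentation a weak equivalence by homotopical discreteness), after which the equality with \w{\HAQB{0}(A;M)} is definitional. For the second isomorphism the paper simply cites Lemma 5.1 of the earlier Blanc--Paoli paper, whereas you supply a direct proof; your argument is the standard one underlying that lemma and is essentially sound. Writing $A$ as the free $\clO$-category on a graph $\Gamma$, a map out of $A$ over $B$ (in each simplicial level) is determined by independent choices on the generating edges, so the mapping space splits strictly as a product of fibres of \w{\EB{M}{n}(a,b)\to B(a,b)} \wwh no homotopical compatibility of the free--forgetful adjunction \wref{eq4-comores} with the Dwyer--Kan structure needs to be invoked, since the splitting holds on the nose and the fibres are already homotopy fibres because these maps are fibrations. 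Two small corrections: $\pi\sb{n}$ of a product of Eilenberg--Mac~Lane spaces is the \emph{product} $\prod\sb{e}M(a,b)$, not the direct sum (these differ for a graph with infinitely many edges, and it is the product that matches \w[);]{\Hom\sb{\Sz{\Cato}/B}(A,M\sb{1})} and you should note that you are computing \w{\map\sb{\SOC/B}(A,\EB{M}{n})} rather than \w[,]{\map\sb{\SOC/B}(\dI Z,\EB{M}{n})} which is legitimate because $A$ itself, viewed as a discrete simplicial category, is already cofibrant and \w{\dI Z\to A} is a weak equivalence between cofibrant objects mapping into a fibrant target. The trade-off is clear: the paper's proof is shorter but opaque without the reference, while yours makes the mechanism (freeness forcing the decomposition over generators, plus the Eilenberg--Mac~Lane shape of \w{\EB{M}{n}} from Lemma \ref{lem3-eilmacob}) visible.
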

\begin{proof}
By Lemma \ref{rem1-longseq}, \w{\dI Z\rw A} is a cofibrant replacement, hence
$$
\pi\sb{n}\map\sb{\SOC/B} (\dI Z,\EB{M}{n})~=~\HAQB{0}(A,M)\;.
$$
The second isomorphism is \cite[Lemma 5.1]{BP}.
\end{proof}

\begin{lemma}\label{lem2-longseq}
Given \w{X\in\Trk{n}} for \w[,]{n\geq 2} there exists \w{S(X)\in\Trk{n}} with
\w{\p{0}(S(X))\sb{0}\in\Cato} a free category, and an $n$-equivalence
\w[.]{v\sb{X}:S(X)\rw X}
\end{lemma}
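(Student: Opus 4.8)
The plan is to induct on $n \geq 1$. For $n = 1$ this is the statement for track categories proved in \cite{BP2019}: every track category receives a $1$-equivalence from one whose category of $0$-cells is free. So assume $n \geq 2$ and that the lemma holds in dimension $n-1$. The idea for the inductive step is to leave the top groupoid direction of $X$ untouched, apply the inductive hypothesis to its $(n-1)$-truncation, and glue the results back together using the pullback of Proposition \ref{pro03}.

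In detail, first I would put $W := \p{n-1}X \in \Trk{n-1}$ and invoke the inductive hypothesis to obtain $S(W) \in \Trk{n-1}$, whose (fully discretized) object-of-objects $\p{0}(S(W))_0$ is a free category, together with an $(n-1)$-equivalence $v_W \colon S(W) \to W = \p{n-1}X$. Since $X \in \Gpdwg{n}$ and $S(W) \in \Gpdwg{n-1}$, I can apply Proposition \ref{pro03} --- read internally to $\Cato$, i.e. with $\Gpdwg{n-1}(\Cato) = \Trk{n-1}$ and $\Gpdwg{n}(\Cato) = \Trk{n}$ in place of $\Gpdwg{n-1}$ and $\Gpdwg{n}$ --- with $Z := S(W)$ and $r := v_W$. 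Setting
$$
S(X) ~:=~ X \times_{\dpp{n-1}\p{n-1}X} \dpp{n-1}S(W),
$$
the pullback formed along the canonical quotient map $X \to \dpp{n-1}\p{n-1}X$ and along $\dpp{n-1}v_W$, Proposition \ref{pro03} gives that $S(X) \in \Gpdwg{n}$, hence $S(X) \in \Trk{n}$, and that the projection $v_X \colon S(X) \to X$ is an $n$-equivalence. (The hypotheses $X \in \Gpdwg{n}$ and $S(W) \in \Gpdwg{n-1}$ are exactly what the last sentence of Proposition \ref{pro03} requires in order to keep us inside $\Gpdwg{n}$ rather than merely $\Gcatwg{n}$.) This settles the $n$-equivalence part of the statement.

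It then remains to recognise $\p{0}(S(X))_0$ as a free category. Evaluation at level $0$ in the first groupoid direction preserves pullbacks, so
$$
(S(X))_0 ~\cong~ X_0 \times_{(\p{n-1}X)_0} (S(W))_0 ~\cong~ X_0 \times_{\p{n-2}X_0} (S(W))_0,
$$
with left-hand leg the canonical truncation map $X_0 \to \p{n-2}X_0$. Since $X \in \Gpdwg{n}$, its object-of-objects $X_0$ lies in $\cathd{n-1}$; hence by Proposition \ref{pro01} this truncation map is an $(n-1)$-equivalence, and --- crucially --- it becomes an isomorphism after the full discretization functor $\p{0}$, because $\p{0}\p{n-2} = \p{0}$. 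Applying $\p{0}$ to the pullback square then collapses the left leg to an isomorphism, so $\p{0}(S(X))_0 \cong \p{0}(S(W))_0$, which is free by the inductive hypothesis.

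The hard part is this last paragraph: one must check carefully that the full discretization $\p{0}$ interacts with the pullback defining $S(X)$ so that $\p{0}(S(X))_0$ is indeed computed as the degenerate pullback $\p{0}X_0 \times_{\p{0}X_0} \p{0}(S(W))_0 \cong \p{0}(S(W))_0$. This relies on the homotopical discreteness of $X_0$ (so that all four objects of the relevant square are homotopically discrete and $\p{0}$ preserves pullbacks along truncation maps of such objects, a direct verification in the spirit of \cite{PaolS}), together with the fact that the base-change map is a truncation map; transporting Proposition \ref{pro03} into the $\Cato$-internal setting is routine but should be spelled out.
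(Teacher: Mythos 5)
Your proposal follows essentially the same route as the paper's own proof: induct on $n$, appeal at the base to the track-category statement, and in the inductive step pull back $X$ along its $(n-1)$-truncation against the cofibrant-on-objects replacement $S(\p{n-1}X)$. The paper takes $n=2$ as its base case (citing \cite{BP} there for the free-on-objects replacement of the track category $\p{1}X$, rather than \cite{BP2019}), and it verifies $S(X)\in\Trk{n}$ via Proposition \ref{pro02} while using Proposition \ref{pro03} only at the level of hom-objects; you consolidate both conclusions into a single internal-to-$\Cato$ application of Proposition \ref{pro03}, which is a legitimate shortcut since that proposition already delivers both $P\in\Gpdwg{n}$ and the $n$-equivalence of $w$. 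One place where the paper is cleaner is the free-category claim: it records the identity $\p{n-1}S(X)=S(\p{n-1}X)$ (the analogue of \eqref{eq2-longseq}) en route, from which $\p{0}(S(X))\sb{0}=\p{0}(S(\p{n-1}X))\sb{0}$ follows immediately, whereas your level-$0$ pullback computation has some unresolved indexing (the identification $(\p{n-1}X)\sb{0}\cong\p{n-2}X\sb{0}$, and how $\p{0}$ preserves the pullback) that would need to be checked against the conventions; recording the truncation identity first, as the paper does, sidesteps this.
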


\begin{proof}
By induction on $n$:

For \w{X\in\Trk{2}} we have \w[,]{\p{1}X\in\Trk{}} so by \cite{BP} there exists a
$2$-equivalence \w{v:Z(X)\to\p{1}X} in \w[,]{\Trk{}} where \w{Z(X)\in\Trk{}} has
\w{(Z(X))\sb{0}} a free category.
Consider the following pullback in \w[:]{[\Dop,\Gpd(\Cato)]}
$$
\xymatrix{
S(X)\ar[r]\sp{v\sb{X}}\ar[d] & X \ar[d]\\
d\sp{(1)}Z(X)\ar[r]\sb{d\sp{(1)}v} & d\sp{(1)}\p{1}X~.
}
$$
We claim that \w{S(X)} is in\w[.]{\Trk{2}}
In fact, for each \w{k\geq 0} there is a pullback
$$
\xymatrix{
(S(X))\sb{k}\ar[r]\ar[d] & X\sb{k} \ar[d]\\
d (Z(X))\sb{k}\ar[r] & d p X\sb{k}
}
$$
\noindent in \w[.]{\Gpd(\Cato)}

Since $p$ commutes with pullbacks over discrete objects, we deduce that
\w[,]{p(S(X))\sb{k}=(Z(X))\sb{k}} and therefore
\begin{myeq}\label{eq2-longseq}
\p{1}S(X)=Z(X)\;.
\end{myeq}
Since \w{X\sb{0}} is homotopically discrete, while \w{d(Z(X))\sb{0}} and
\w{dpX\sb{0}} are discrete, it follows that \w{(S(X))\sb{0}} is
homotopically discrete. By \wref{eq1-longseq} \w{\p{1}S(X)} is in \w[,]{\Gpd(\Cato)}
so by Proposition \ref{pro02} \w[.]{S(X)\in\Trk{2}}

By construction, for each \w[,]{a,b\in (S(X))\sp{d}\sb{0}=(Z(X))\sb{0}}
there is a pullback
$$
\xymatrix{
(S(X))\sb{1}(a,b)\ar[r]\sp{v\sb{X}(a,b)} \ar[d] & X\sb{1}(va,vb)\ar[d]\\
\dd(Z(X))\sb{1}(a,b)\ar[r]\sb{dv(a,b)} & \dd(pX\sb{1})(va,vb)
}
$$
\noindent in \w[.]{\Gpd(\Cato)}

Since $v$ is an equivalence in \w[,]{\Gpd(\Cato)} \w{v(a,b)} is a bijection.
Therefore, \w{v\sb{X}(a,b)} is an isomorphism. Together with the fact that
\w{\p{1}v\sb{X}=v} is an equivalence in \w[,]{\Trk{}} this shows that
\w{v\sb{X}} is a $2$-equivalence in \w[.]{\Trk{2}}
By construction, \w{\p{0}(S(X))\sb{0}=(Z(X))\sb{0}} is a free category.

Suppose by induction that the Lemma holds for \w[,]{n-1} and let
\w[.]{X\in\Trk{n}} Then \w{\p{n-1}X} is in \w[,]{\Trk{n-1}} so by the
inductive hypothesis there are \w{S(\p{n-1}X)\in\Trk{n-1}} (such that
\w{\p{0}[S(\p{n-1}X]\sb{0}} is a free category) and an \wwb{n-1)}equivalence
$$
S(\p{n-1}X)\xrw{\nu\sb{\p{n-1}X}} \p{n-1}X\;.
$$
Consider the following pullback
\begin{equation*}
\xymatrix@C=60pt{
S(X)\ar[r]\sp{\nu\sb{X}}\ar[d] & X\ar[d]\\
\dpp{n}S(\p{n-1}X)\ar[r]\sb{\dpp{n}\nu\sb{\p{n-1}X}} & \dpp{n}\p{n-1}X
}
\end{equation*}
\noindent in \w[.]{\funcat{n-1}{\Gpd(\Cato)}}

We claim that \w{S(X)} is in \w[.]{\Trk{n}} In fact, for each
\w[,]{\uk\in\zD\sp{{n-1}\op}}there is a pullback
\begin{equation*}
\xymatrix@C=40pt{
[S(X)]\sb{\uk}\ar[r]\ar[d] & X\sb{\uk}\ar[d]\\
d S(\p{n-1}X)\sb{\uk}\ar[r] & d(\p{n-1}X)\sb{\uk}
}
\end{equation*}
\noindent in \w[.]{\Gpd(\Cato)}

Since \w[,]{X\in\Trk{n}} \w{X\sb{0}\sp{\{r\}}} is a levelwise equivalence relation;
as \w{\dd\p{1}X} and \w{dZ(X)} are discrete, it follows that \w{S(X)\sb{0}\sp{\{r\}}}
is a levelwise equivalence relation. Since, by \wref[,]{eq2-longseq}
\w{\p{n-1}S(X)} is in \w[,]{\Trk{n-1}} it follows by Proposition \ref{pro02} that
\w[.]{S(X)\in\Trk{n}} By construction, for each
\w{a,b\in S(X)\sb{0}\sp{d}=[S(\p{n-1}X]\sb{0}\sp{d}} there is a pullback
\begin{equation*}
\xymatrix@C=75pt{
(S(X))\sb{1}(a,b) \ar[r]\sp{v\sb{X}(a,b)}\ar[d]& X\sb{1}(va,vb)\ar[d]\\
  d\sp{(n-1)}S(\p{n-1})\sb{1}(a,b)  \ar[r]\sb{d\sp{(n-1)}v\sb{\p{n-1}X}(a,b)} &
  d\sp{(n-1)}\p{n-1}X\sb{1}(va,vb)
}
\end{equation*}
\noindent in \w[.]{\funcat{n-2}{\Cato}}

Since by inductive hypothesis \w{v\sb{\p{n-1}X}(a,b)} is an \wwb{n-1)}equivalence by Proposition
\ref{pro03}, \w{v\sb{X}(a,b)} is an $n$-equivalence. In conclusion, $v\sb{X}$
is an $n$-equivalence. By \wref{eq2-longseq}
\begin{equation*}
  \p{0}(S(X))\sb{0}=\p{n}\p{n-1}(S(X))\sb{0}=\p{0}(S(\p{n-1}X))\sb{0}
\end{equation*}
and by inductive hypothesis this is a free category. This completes the inductive step.
\end{proof}

\begin{proposition}\label{pro2-longseq}
Let $X\in\Trk{n}$ and $M$ be a module over $\p{1}X\in\Trk{}$. Let
\w{d\sb{I}:\clK\sb{s} X\rw X} be the iterated face map. Let
\begin{equation*}
  D\sp{s}=\Hom\sb{\Sz{\Trk{n}}/\clK\sb{s} X}(\clK\sb{s} X,E\up{n}(\p{1}\clK\sb{s}
  X,d\sb{I}\sp{\ast} M))\;.
\end{equation*}
Then $\{D\sp{s}\}\sb{s \geq 0}$ is a cosimplicial abelian group.
\end{proposition}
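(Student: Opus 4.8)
The plan is to realise $D\sp{\bullet}$ as a functor $\Delta\to\mathsf{Ab}$, the cosimplicial structure coming from the simplicial structure of the comonad resolution $\ckb X$ together with the contravariant functoriality of the Eilenberg--Mac~Lane construction; this is formally parallel to the track-category case ($n=1$) in \cite{BP2019}. First I would fix, for each $s\ge 0$, the canonical comparison map $\zt\sb{s}\colon\ck\sb{s}X\to\dpp{n}\p{1}\ck\sb{s}X$, so that $\p{1}d\sb{I}\colon\p{1}\ck\sb{s}X\to\p{1}X$ is defined and $d\sb{I}\sp{\ast}M:=(\p{1}d\sb{I})\sp{\ast}M$ is a $\p{1}\ck\sb{s}X$-module. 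By Proposition \ref{pro1-eilmacob}(b), $E\up{n}(\p{1}\ck\sb{s}X,d\sb{I}\sp{\ast}M)$ is an abelian group object in $(\hy{\Gpd\sp{n}}{\Cato},\,\dpp{n-1}\p{1}\ck\sb{s}X)/\dpp{n}\p{1}\ck\sb{s}X$; pulling it back along $\zt\sb{s}$ gives an abelian group object of the slice $\Trk{n}/\ck\sb{s}X$, since pullback functors preserve finite products. Hence
$$
D\sp{s}~=~\Hom\sb{\Sz{\Trk{n}}/\ck\sb{s}X}\bigl(\ck\sb{s}X,\ \zt\sb{s}\sp{\ast}E\up{n}(\p{1}\ck\sb{s}X,d\sb{I}\sp{\ast}M)\bigr)
$$
is the abelian group of sections of that abelian group object, so each $D\sp{s}$ is an abelian group.

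Next I would produce the structure maps. For a morphism $\zt\colon[a]\to[b]$ in $\Delta$, let $g\sb{\zt}\colon\ck\sb{b}X\to\ck\sb{a}X$ be the corresponding structure map of $\ckb X$ (a composite of faces $\pt\sb{i}$ and degeneracies $\zs\sp{j}$). Since $\ckb X$ is augmented over $X$, the iterated face maps $d\sb{I}\sp{(s)}\colon\ck\sb{s}X\to X$ are natural in the simplicial variable, i.e.\ $d\sb{I}\sp{(b)}=d\sb{I}\sp{(a)}\circ g\sb{\zt}$; so by functoriality of $\p{1}$ and of module pullback, $(d\sb{I}\sp{(b)})\sp{\ast}M=(\p{1}g\sb{\zt})\sp{\ast}(d\sb{I}\sp{(a)})\sp{\ast}M$. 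Lemma \ref{lem4-eilmacob} then identifies
$$
E\up{n}\bigl(\p{1}\ck\sb{b}X,(d\sb{I}\sp{(b)})\sp{\ast}M\bigr)~\cong~\dpp{n}\p{1}\ck\sb{b}X\times\sb{\dpp{n}\p{1}\ck\sb{a}X}E\up{n}\bigl(\p{1}\ck\sb{a}X,(d\sb{I}\sp{(a)})\sp{\ast}M\bigr),
$$
and combining this with the naturality square $\dpp{n}\p{1}g\sb{\zt}\circ\zt\sb{b}=\zt\sb{a}\circ g\sb{\zt}$ and composing pullbacks yields a natural isomorphism of abelian group objects over $\ck\sb{b}X$,
$$
\zt\sb{b}\sp{\ast}E\up{n}\bigl(\p{1}\ck\sb{b}X,(d\sb{I}\sp{(b)})\sp{\ast}M\bigr)~\cong~g\sb{\zt}\sp{\ast}\,\zt\sb{a}\sp{\ast}E\up{n}\bigl(\p{1}\ck\sb{a}X,(d\sb{I}\sp{(a)})\sp{\ast}M\bigr).
$$
Pulling a section back along $g\sb{\zt}$ and transporting along this identification defines $\zt\sb{\ast}\colon D\sp{a}\to D\sp{b}$, which is a homomorphism of abelian groups because $g\sb{\zt}\sp{\ast}$ is a morphism of abelian group objects (pullback preserves products).

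Finally I would verify that $\zt\mapsto\zt\sb{\ast}$ is functorial, i.e.\ that the cosimplicial identities hold: $(\Id)\sb{\ast}=\Id$ and $(\zt'\zt)\sb{\ast}=\zt'\sb{\ast}\zt\sb{\ast}$. These follow from $\ckb X$ being a genuine simplicial object — so the maps $g\sb{\zt}$ compose contravariantly and $g\sb{\Id}=\Id$ — together with the (pseudo)functoriality of iterated pullback, taking the identifications above as the coherent ones; equivalently, each of the finitely many cosimplicial relations reduces to the corresponding simplicial relation among the $\pt\sb{i}$ and $\zs\sp{j}$ of $\ckb X$ plus the naturality of $\p{1}$, of $d\sb{I}\sp{\ast}$, of $E\up{n}(-,-)$ and of the pullback square of Lemma \ref{lem4-eilmacob}. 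The main obstacle is precisely the bookkeeping in the previous paragraph — assembling Lemma \ref{lem4-eilmacob}, the augmentation compatibility $d\sb{I}\sp{(b)}=d\sb{I}\sp{(a)}\circ g\sb{\zt}$ and the naturality of $\zt\sb{s}$ into a single \emph{natural} identification $\zt\sb{b}\sp{\ast}E\up{n}(\dots)\cong g\sb{\zt}\sp{\ast}\zt\sb{a}\sp{\ast}E\up{n}(\dots)$; once this is in place the statement is formal. This mirrors the argument for $n=1$ in \cite{BP2019}.
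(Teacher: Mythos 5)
Your argument is essentially correct and rests on the same ingredient as the paper's — the pullback square of Lemma \ref{lem4-eilmacob} — but the route is different and somewhat more laborious. The paper applies Lemma \ref{lem4-eilmacob} once, with target $\dpp{n}\p{1}X$: by the universal property of the pullback, a map from $\clK\sb{s}X$ to $E\up{n}(\p{1}\clK\sb{s}X, d\sb{I}\sp{\ast}M)$ over $\dpp{n}\p{1}\clK\sb{s}X$ is the same datum as a map from $\clK\sb{s}X$ to $E\up{n}(\p{1}X,M)$ over $\dpp{n}\p{1}X$, so that
$$
D\sp{s}~\cong~\Hom\sb{\Gpd\sp{n}(\Cato)/\dpp{n}\p{1}X}\bigl(\clK\sb{s}X,\ E\up{n}(\p{1}X,M)\bigr).
$$
Since the target $E\up{n}(\p{1}X,M)$ does not depend on $s$, the cosimplicial structure is just precomposition with the simplicial structure maps of $\clK\sb{\bullet}X$ (viewed over $\dpp{n}\p{1}X$ via the augmentation); functoriality is strict and automatic, with nothing to verify. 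You instead keep $D\sp{s}$ as sections of an $s$-dependent abelian group object over $\clK\sb{s}X$, build each structure map $\zt\sb{\ast}$ by pulling back sections along $g\sb{\zt}$ and transporting along the isomorphism coming from Lemma \ref{lem4-eilmacob} together with the naturality of $\zt\sb{s}$, and then argue functoriality from the (pseudo)functoriality of iterated pullback. That is a genuine extra layer: you must check that the coherence isomorphisms compose correctly, i.e.\ that the two candidate identifications for $(\zt'\zt)\sb{\ast}$ agree, which you flag but do not carry out. The paper's single change-of-slice observation makes this coherence discussion unnecessary; it is worth knowing this reduction, both because it shortens the proof and because it is the same device used throughout this circle of arguments (e.g.\ when one wants to see at a glance that the construction is natural in $X$).
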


\begin{proof}
By Lemma \ref{lem4-eilmacob} there is a pullback in $\funcat{n}{\Cato}$
$$
\xymatrix@C=50pt{
  E\up{n}(\p{1}\clK\sb{s} X,d\sb{I}\sp{\ast} M) \ar[r] \ar[d] &
  E\up{n}(\p{1}X, M)\ar[d]\\
\dpp{n}\p{1}\clK\sb{s} X \ar[r]\sb{\dpp{n}\p{1}d\sb{I}} & \dpp{n}\p{1} X
}
$$
Therefore
\begin{equation*}
\begin{split}
  D\sp{s} & = \Hom\sb{\Sz{\Trk{1}}/\p{1}\clK\sb{s} X}
  (\clK\sb{s} X,E\up{n}(\p{1}\clK\sb{s} X,d\sb{I}\sp{\ast} M)) \cong\\
  & \cong \Hom\sb{\Sz{\Gpd\sp{n}(\Cato)}/
    \dpp{n}\p{1}X}(\clK\sb{s} X,E\up{n}(\p{1}X, M))\;.
\end{split}
\end{equation*}
Since \w{E\up{n}(\p{1}X, M)} does not depend on $s$, \w{\{D\sp{s}\}\sb{s \geq 0}}
is a cosimplicial abelian group.
\end{proof}

We now introduce our algebraic model for the cohomology of an
$n$-track category, and show how it fits into a certain long exact sequence relating
it to the Andr\'{e}-Quillen cohomology:

\begin{definition}\label{def1-longseq}
For \w{X\in\Trk{n}} and $M$ a \ww{\p{1}X}-module, let \w{D\sp{\sbl}} be the
cosimplicial abelian group of Proposition \ref{pro2-longseq}.
We define the $n$-th \emph{algebraic cohomology group} of $X$ with coefficients
in $M$ to be \w[.]{\HAlg{n}(X;M):=\pi\sp{s} D\sp{\sbl}}
\end{definition}

In order to construct the long exact sequence, we need to recall some constructions
and results from \cite{BP2019}, which were originally needed only for the case \w{n=2}
of the present paper, but were presented there in a generality sufficient for
our purposes:

\begin{definition}\label{dsplitt}
Let $\clC$ be a category with enough limits, and \w{Z=H(Y)} for
\w{Y=\xymatrix{(Z\sb{0} \ar@<0.5ex>^{q}[r] & \pi\sb{0} \ar@<0.5ex>^{t}[l])}}
in \w[,]{\Spl(\clC)} with \w{\dZ\sb{0}} the discrete internal groupoid on
\w[.]{Z\sb{0}} Let \w{\rho:M\to Z} be a module in
\w[,]{[\Gpd(\clC,Z\sb{0})/Z]\sb{\ab}}  and define \w{j\colon\dZ\sb{0} \rw Z}
to be the map
\mypdiag[\label{eqmapj}]{
Z\sb{0} \ar^(0.4){\zD\sb{Z\sb{0}}}[r] \ar@<-1.0ex>[d]\sb{\Id} \ar[d]\sp{\Id} &
Z\sb{0} \tiund{q}Z\sb{0} \ar@<-1ex>[d]\sb{\pr\sb{0}} \ar[d]\sp{\pr\sb{1}} \\
Z\sb{0} \ar@<-1ex>@/_0.8pc/_{\Id}[u] \ar[r]\sb{\Id\sb{Z\sb{0}}} &
Z\sb{0} \ar@<-1ex>@/_1.1pc/[u]\sb{\zD\sb{Z\sb{0}}}
}
\noindent in \w[,]{(\GC ,Z\sb{0})} and consider the pullback
\mypdiag[\label{sbs-map-zvt-eq1}]{
  j\sp{\ast} M\ar[r]\sp{k} \ar[d]\sb{\zl} & M\ar[d]\sp{\zr} \\
\dZ\sb{0} \ar[r]\sb{j} & Z
}
\noindent in \w[,]{\GC/Z} where
\w[,]{d\sb{0}=d\sb{1}=\zl\sb{1}:(j\sp{\ast} M)\sb{1} \rw Z\sb{0}} since
\w{\dZ\sb{0}} is discrete.
\end{definition}

\begin{proposition}\label{pro3-longseq}
For $\clC$, \w[,]{Y\in\Spl(\clC)} \w[,]{Z=H(Y)} and $M$ a module over $Z$
as in Definition \ref{dsplitt}, there is a short exact sequence of abelian groups
\begin{myeq}\label{eqshortes}
\begin{split}
0~\rw~\Hom\sb{\clC/\pi\sb{0}}(\pi\sb{0},t\sp{\ast}(j\sp{\ast} M)\sb{1})&
~\xrw{\xi'}~\Hom\sb{\clC/Z\sb{0}}(Z\sb{0},j\sp{\ast} M\sb{1})\\
&\xrw{\vartheta'}~\Hom\sb{\clC/Z}(Z,M)~\rw~0.
\end{split}
\end{myeq}
\end{proposition}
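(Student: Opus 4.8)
The plan is to identify all three groups in \wref{eqshortes} with natural subquotients of the single abelian group \w[,]{\Hom\sb{\clC/Z\sb{0}}(Z\sb{0},(j\sp{\ast} M)\sb{1})} and then recognize the sequence as the split short exact sequence attached to a retraction. Write \w[;]{N:=(j\sp{\ast} M)\sb{1}} by the pullback \wref[,]{sbs-map-zvt-eq1} this is the pullback of \w{\zr\sb{1}:M\sb{1}\rw Z\sb{1}} along \w[,]{\zD\sb{Z\sb{0}}:Z\sb{0}\rw Z\sb{1}} hence an abelian group object over \w{Z\sb{0}} via \w[,]{\zl\sb{1}:N\rw Z\sb{0}} which we think of as the additive part (coefficient module) of \w[.]{M} Recall that, since \w[,]{Z=H(Y)} we have \w{Z\sb{1}=Z\sb{0}\tiund{\pi\sb{0}}Z\sb{0}} with \w{(d\sb{0},d\sb{1})} the two projections, and \w{\zD\sb{Z\sb{0}}} is the identity-inclusion \wref[.]{eqmapj} Finally fix the zero section \w{\zf:Z\rw M} of \w[,]{\zr} and write \w{e\sp{M}:Z\sb{0}\rw M\sb{1}} for the identity-assigning map of \w{M} (recall \w[).]{M\sb{0}=Z\sb{0}}

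The first step is to describe \w{\Hom\sb{\clC/Z}(Z,M)} concretely. An element is an internal section \w{s:Z\rw M} of \w[;]{\zr} since \w[,]{\zr\sb{0}=\Id\sb{Z\sb{0}}} it is determined by a map \w{s\sb{1}:Z\sb{1}\rw M\sb{1}} over \w[,]{Z\sb{1}} and functoriality on identities forces \w[.]{s\sb{1}\zD\sb{Z\sb{0}}=e\sp{M}} Composing \w{s\sb{1}} with the map \w{(\Id\sb{Z\sb{0}},tq):Z\sb{0}\rw Z\sb{1}} supplied by the splitting \w{(q,t)} of \w{Y} (well defined as \w[)]{qtq=q} yields a ``spoke map'' \w[,]{w:=s\sb{1}\circ(\Id\sb{Z\sb{0}},tq):Z\sb{0}\rw M\sb{1}} lying over \w{(\Id\sb{Z\sb{0}},tq)} and restricting to identity morphisms on the representatives, i.e.\ \w[.]{w t=e\sp{M}t} Conversely, such a \w{w} determines \w{s\sb{1}} by the internal analogue of \w[,]{s\sb{1}(a,b)=w(b)\sp{-1}\circ w(a)} that is, by composing in \w{M} the map \w{w\circ d\sb{0}} with the internal inverse of \w[;]{w\circ d\sb{1}} using that \w{M} is a groupoid object and that \w{w} lies over the spokes, one checks internally that this is an internal functor, that \w[,]{s\sb{1}\zD\sb{Z\sb{0}}=e\sp{M}} and that the two assignments are mutually inverse. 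Hence \w{\Hom\sb{\clC/Z}(Z,M)} is in natural bijection with \w[.]{\{\,w:Z\sb{0}\rw M\sb{1}\ \mid\ \zr\sb{1}w=(\Id\sb{Z\sb{0}},tq),\ w t=e\sp{M}t\,\}}

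Next I would rigidify using the zero section. Taking fibrewise differences against the fixed map \w{w\sb{0}:=\zf\sb{1}\circ(\Id\sb{Z\sb{0}},tq)} (possible since \w{M} is an abelian group object over \w[)]{Z} identifies \w{\{\,w\mid\zr\sb{1}w=(\Id\sb{Z\sb{0}},tq)\,\}} with \w[,]{\Hom\sb{\clC/Z\sb{0}}(Z\sb{0},N)} and under this identification the condition \w{w t=e\sp{M}t} becomes \w[,]{\bar w\circ t=0} since \w[.]{w\sb{0}t=e\sp{M}t} Thus \w{\Hom\sb{\clC/Z}(Z,M)\cong\ker(t\sp{\ast})} with \w{t\sp{\ast}:\Hom\sb{\clC/Z\sb{0}}(Z\sb{0},N)\rw\Hom\sb{\clC/\pi\sb{0}}(\pi\sb{0},t\sp{\ast} N)} restriction along \w[,]{t} and this identification (followed by the reconstruction above) is precisely \w[.]{\zvt'} On the other side, \w{\xi'} is the transport map sending \w{v:\pi\sb{0}\rw t\sp{\ast} N} to the composite \w{Z\sb{0}\rw N} obtained from \w{v\circ q} by conjugating along the canonical spoke \w{(\Id\sb{Z\sb{0}},tq)} via the \w{Z}-action of \w{M} on its additive part \w{N} (the structure maps of the module \w[).]{M} A direct diagram computation gives \w[,]{t\sp{\ast}\circ\xi'=\Id} so \w{\xi'} is split monic and \w[.]{\Hom\sb{\clC/Z\sb{0}}(Z\sb{0},N)\cong\operatorname{im}(\xi')\oplus\ker(t\sp{\ast})}

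It then remains to assemble the pieces: injectivity of \w{\xi'} is immediate from \w[;]{t\sp{\ast}\xi'=\Id} exactness in the middle is the above direct-sum decomposition together with \w[;]{\ker\zvt'=\ker(t\sp{\ast})=\operatorname{im}(\xi')} and \w{\zvt'} is onto because every section \w{s} is recovered, via the reconstruction of the second paragraph, from the element \w[,]{s\sb{1}(\Id\sb{Z\sb{0}},tq)-w\sb{0}\in\ker(t\sp{\ast})\subseteq\Hom\sb{\clC/Z\sb{0}}(Z\sb{0},N)} on which \w{\zvt'} carries it back to \w[.]{s} The one genuinely non-formal point is the internal verification in the second paragraph --- that the internal version of \w{s\sb{1}(a,b)=w(b)\sp{-1}w(a)} defines an internal functor and sets up a bijection between internal sections and normalized spoke maps --- together with the internal well-definedness of \w[;]{\xi'} the rest is the bookkeeping of a split exact sequence. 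This argument parallels, and generalizes, the corresponding computation in \cite{BP2019}.
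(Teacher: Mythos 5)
The paper's own proof of Proposition~\ref{pro3-longseq} is a one-line citation: it assembles \cite[Proposition~3.5, Lemma~3.7, Definitions~3.4 and~3.6]{BP2019}, with $\xi'=[(tq)^{\ast}]^{-1}\circ\xi$. Your proof instead unpacks the whole argument from scratch: you identify internal sections of $\rho:M\to Z$ with ``spoke maps'' $w:Z_0\to M_1$ lying over $(\Id_{Z_0},tq)$ and normalized by $wt=e^{M}t$, then exhibit $t^{\ast}$ as a retraction of $\xi'$, so the sequence splits. That strategy is sound, and in fact it is stronger than what the proposition claims (you obtain a \emph{split} short exact sequence). The payoff of your route is self-containment and an explicit description of the maps; the payoff of the paper's route is brevity and compatibility with the formalism of \cite{BP2019} that is reused in later arguments.

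One step is imprecisely stated and would need repair in a full write-up. You claim that ``taking fibrewise differences against $w_0=\phi_1\circ(\Id_{Z_0},tq)$'' identifies $\{w\mid\rho_1w=(\Id_{Z_0},tq)\}$ with $\Hom_{\clC/Z_0}(Z_0,N)$, where $N=(j^{\ast}M)_1=\Delta_{Z_0}^{\ast}M_1$. But the abelian group structure on $M_1\to Z_1$ is fibrewise over $Z_1$, so $w-w_0$ still lies over the spoke $(\Id_{Z_0},tq)$, not over the diagonal $\Delta_{Z_0}$; the difference alone lands in $(\Id_{Z_0},tq)^{\ast}M_1$, not in $N$. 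To get into $N$ you must in addition transport along the spoke using the groupoid composition in $M$ and the zero section (i.e.\ the iso $A(\Id_a)\xrightarrow{\ \circ\,\phi_1(a,tqa)\ }A((a,tqa))$). You do invoke exactly this transport when defining $\xi'$ (``conjugating along the canonical spoke via the $Z$-action of $M$''), so the idea is there, but the identification in the third paragraph should use it explicitly rather than be attributed to the abelian-group structure alone. With that correction, the kernel computation $\ker(t^{\ast})\cong\Hom_{\clC/Z}(Z,M)$, the splitting $t^{\ast}\xi'=\Id$, and the exactness in the middle all go through as you describe.
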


\begin{proof}
This follows from \cite[Proposition 3.5]{BP2019} combined with
\cite[Lemma 3.7]{BP2019}, for \w[,]{\xi'=[(tq)\sp{\ast}]\sp{-1}\circ\xi}
$\xi$ given by \cite[Definition 3.4]{BP2019},
and \w{\vartheta'} given by \cite[Definition 3.6]{BP2019}.
\end{proof}

If we further assume that $\clC$ has a faithful ``diagonal nerve'' functor embedding
it in \w{\hy{\SO}{\Cat}} \wwh e.g., for  we obtain from
\cite[Lemma 3.7]{BP2019} and \cite[Lemma 5.2]{BP2019} the following variant of
Proposition \ref{pro3-longseq}:

\begin{proposition}\label{pses}
For $Z$ and $M$ as in Definition \ref{dsplitt},
there is a short exact sequence of abelian groups
\begin{myeq}\label{eqses}
\begin{split}
0~\rw~\pi\sb{1}\map\sb{\Gpd(\clC)/Z}(Z,M)&
~\xrw{\xi''}~\Hom\sb{\clC/Z\sb{0}}(Z\sb{0},j\sp{\ast} M\sb{1})\\
&~\xrw{\vartheta'}~\Hom\sb{\clC/Z}(Z,M)~\rw~0.
\end{split}
\end{myeq}
\end{proposition}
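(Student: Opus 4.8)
The plan is to deduce the sequence \eqref{eqses} from the short exact sequence \eqref{eqshortes} of Proposition \ref{pro3-longseq}. In \eqref{eqshortes} the middle term $\Hom\sb{\clC/Z\sb{0}}(Z\sb{0},j\sp{\ast}M\sb{1})$, the right-hand term $\Hom\sb{\clC/Z}(Z,M)$, and the epimorphism $\vartheta'$ between them are exactly the ones appearing in \eqref{eqses}; thus it suffices to produce a natural isomorphism
$$
\pi\sb{1}\map\sb{\Gpd(\clC)/Z}(Z,M)~\cong~\Hom\sb{\clC/\pi\sb{0}}(\pi\sb{0},\,t\sp{\ast}(j\sp{\ast}M)\sb{1})
$$
and then define $\xi''$ as the composite of this isomorphism with $\xi'$. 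Exactness of \eqref{eqses} at each of its three positions is then immediate from exactness of \eqref{eqshortes}.

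To build the isomorphism I would invoke the standing hypothesis that $\clC$ embeds faithfully into $\hy{\SO}{\Cat}$ via a diagonal nerve functor, so that $\map\sb{\Gpd(\clC)/Z}(Z,M)$ is an honest simplicial mapping space; since $M$ is an abelian group object in $[\Gpd(\clC,Z\sb{0})/Z]\sb{\ab}$, this mapping space is moreover a simplicial abelian group, and its homotopy groups are the homology of the associated normalized chain complex. The key point is that $Z=H(Y)$ is a homotopically discrete internal groupoid whose object of connected components is $\pi\sb{0}$, so its diagonal nerve is weakly equivalent to the discrete object on $\pi\sb{0}$; hence $\map\sb{\Gpd(\clC)/Z}(Z,M)$, and in particular its $\pi\sb{1}$, depends only on the restriction of $M$ along the structure map $\pi\sb{0}\rw Z$. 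Moreover $M$ being $1$-truncated as a groupoid forces $\pi\sb{i}$ of this mapping space to vanish for $i\geq 2$, so $\pi\sb{1}$ is the invariant of interest.

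Concretely, $\pi\sb{0}$ of the mapping space recovers $\Hom\sb{\clC/Z}(Z,M)$, the set of sections of $\rho$, while a based loop at the zero section is a self-homotopy of the zero functor over $Z$: it assigns to each object of $Z$ an automorphism in $M$ of the corresponding identity cell, naturally in the morphisms of $Z$. Since $Z$ has a (unique) morphism between two objects precisely when they agree in $\pi\sb{0}$, such a datum amounts to a section over $Z\sb{0}$ of $(j\sp{\ast}M)\sb{1}$ that is constant along each connected component of $Z$, and hence --- using the splitting $t\colon\pi\sb{0}\rw Z\sb{0}$ of $q$ --- to a section over $\pi\sb{0}$ of $t\sp{\ast}(j\sp{\ast}M)\sb{1}$. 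Making this description rigorous is precisely what \cite[Lemma 3.7]{BP2019} and \cite[Lemma 5.2]{BP2019} accomplish, now applied with the present $\clC$ in place of the ambient category there.

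The main obstacle --- and the reason \cite[Lemma 5.2]{BP2019} is needed rather than a bare computation --- is to check that this identification of $\pi\sb{1}\map\sb{\Gpd(\clC)/Z}(Z,M)$ with the $\Hom$-set is compatible with the map $\xi'$ of Proposition \ref{pro3-longseq}, so that \eqref{eqshortes} and \eqref{eqses} fit into a commuting ladder, and to verify that the simplicial mapping space is formed with a sufficiently cofibrant model of $Z$ for $\pi\sb{1}$ to be well defined. Both of these have been arranged in the requisite generality in \cite{BP2019}, and the proposition then follows formally.
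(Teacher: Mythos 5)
Your proposal is correct and matches the paper's approach exactly: the paper also obtains \eqref{eqses} from \eqref{eqshortes} by defining $\xi'':=\xi'\circ\psi$, where $\psi$ is precisely the isomorphism $\pi\sb{1}\map\sb{\Gpd(\clC)/Z}(Z,M)\xrw{\cong}\Hom\sb{\clC/\pi\sb{0}}(\pi\sb{0},t\sp{\ast}(j\sp{\ast}M)\sb{1})$ of \cite[Lemma 5.2]{BP2019}, with the remaining two terms and the epimorphism $\vartheta'$ carried over unchanged. The extra motivation you supply about $Z$ being homotopically discrete and $M$ being $1$-truncated is a reasonable gloss on why such an isomorphism exists, but the paper simply cites \cite[Lemma 3.7]{BP2019} and \cite[Lemma 5.2]{BP2019} for it, just as you ultimately do.
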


Here \w[,]{\xi'':=\xi'\circ\psi} where
\begin{myeq}\label{eqisom52}
\psi:\pi\sb{1}\map\sb{\Gpd(\clC)/X}(Z,M)\xrw{\cong}
  \Hom\sb{\clC/\pi\sb{0}}(\pi\sb{0},t\sp{\ast}(j\sp{\ast} M)\sb{1})
\end{myeq}
\noindent is the isomorphism of \cite[Lemma 5.2]{BP2019}.

Specializing to the case \w{\clC=\Trk{n-1}} we have our final version:

\begin{proposition}\label{pnewses}
For \w[,]{Y\in\Spl(\Trk{n-1})} \w[,]{Z=H(Y)} and
\w{\widetilde{M}\in[(\Trk{n},Z\sb{0})/Z]\sb{\ab}} a module over $Z$,
there is a short exact sequence of abelian groups
\begin{myeq}\label{eqnewses}
\begin{split}
0~\rw~\pi\sb{1}\map\sb{\Trk{n}/Z}(Z,\widetilde{M})~\xrw{j\sp{\ast}}&~
\pi\sb{1}\map\sb{\Trk{n}/\dZ\sb{0}}(\dZ\sb{0},j\sp{\ast}\widetilde{M})\\
&~\rw~\Hom\sb{\Trk{n}/Z}(Z,\widetilde{M})~\rw~0~.
\end{split}
\end{myeq}
\end{proposition}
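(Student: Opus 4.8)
The plan is to apply Proposition~\ref{pses} with $\clC=\Trk{n-1}$ and then rewrite each term of the resulting short exact sequence \eqref{eqses} in the language of $n$-track categories. First I would record that $\clC=\Trk{n-1}$ is admissible for Proposition~\ref{pses}: it has all small limits (in particular enough), and it admits a faithful diagonal nerve functor $\dI\colon\Trk{n-1}\to\SOC$ (cf.\ \S\ref{procomres}), which is exactly the extra hypothesis demanded there. Faithfulness holds because the structure maps of the internal groupoids involved are split epimorphisms, so a map of $n$-track categories is determined by what it does in simplicial degrees $0$ and $1$ of each groupoid direction, all of which are visible in the diagonal.

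Next I would check that, for $Y\in\Spl(\Trk{n-1})$, the object $Z=H(Y)$ --- which a priori is only an internal equivalence relation in $\Trk{n-1}$, i.e.\ an object of $\Gpd(\Trk{n-1})$ --- in fact lies in the full subcategory $\Trk{n}\cong\Gpdwg{n}(\Cato)$ of $\Gpd(\Trk{n-1})$ coming from the identification made after Definition~\ref{def1-comores}. This runs exactly as in the proof of Lemma~\ref{lem1-comores}: $H(Y)$ is a levelwise equivalence relation, so it satisfies hypotheses (a) and (b) of Proposition~\ref{pro02}, while $\p{n-1}H(Y)$ lies in $\Trk{n-1}$, giving (c); hence $H(Y)\in\Trk{n}$ (and Proposition~\ref{pro03} supplies the groupoidal refinement). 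The same reasoning, together with Lemma~\ref{lem4-eilmacob}, shows that the discrete internal groupoid $\dZ\sb{0}$, the map $j\colon\dZ\sb{0}\to Z$ of \eqref{eqmapj}, and the pulled-back module $j^{\ast}\widetilde{M}$ over $\dZ\sb{0}$ are likewise objects and morphisms of $\Trk{n}$; thus every ingredient of \eqref{eqses} for $\clC=\Trk{n-1}$ may be read inside $\Trk{n}$.

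Granting this, Proposition~\ref{pses} applied to $\clC=\Trk{n-1}$, $Z=H(Y)$ and $M=\widetilde{M}$ produces the exact sequence \eqref{eqses}, and it remains to identify its three terms. Since $\Trk{n}\hookrightarrow\Gpd(\Trk{n-1})$ is fully faithful and $\Trk{n}$ is closed under the finite limits used to form the function complexes of \cite{BP2019}, the outer terms become $\pi_{1}\map_{\Trk{n}/Z}(Z,\widetilde{M})$ and $\Hom_{\Trk{n}/Z}(Z,\widetilde{M})$, as in \eqref{eqnewses}. For the middle term I would invoke the isomorphism \eqref{eqisom52} of \cite[Lemma 5.2]{BP2019}, now applied to the \emph{discrete} groupoid $\dZ\sb{0}$: there the object of components is $Z_{0}$ and its canonical section is the identity, so \eqref{eqisom52} collapses to $\pi_{1}\map_{\Trk{n}/\dZ\sb{0}}(\dZ\sb{0},j^{\ast}\widetilde{M})\cong\Hom_{\Trk{n-1}/Z_{0}}(Z_{0},(j^{\ast}\widetilde{M})_{1})$, which is exactly the middle term of \eqref{eqses}. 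Transporting the two maps of \eqref{eqses} across these three isomorphisms then gives \eqref{eqnewses}.

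The one genuinely necessary verification is the bookkeeping of the second paragraph: one must make sure that $H(Y)$, $\dZ\sb{0}$, $j$, $j^{\ast}\widetilde{M}$ and the function complexes built from them are computed the same way in the ambient category $\Gpd(\Trk{n-1})$ as in the smaller model $\Trk{n}=\Gpdwg{n}(\Cato)$ --- that is, that $\Trk{n}$ is stable under the relevant finite limits inside $\Gpd(\Trk{n-1})$. This is precisely what Propositions~\ref{pro02} and~\ref{pro03} were stated to provide, so once they are applied the argument becomes a direct transcription of \eqref{eqses}, and no further difficulty is expected.
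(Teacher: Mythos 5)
Your proposal is correct and follows essentially the same route as the paper: both apply Proposition \ref{pses} with $\clC=\Trk{n-1}$ and then convert the middle term $\Hom\sb{\clC/Z\sb{0}}(Z\sb{0},j\sp{\ast}\widetilde{M}\sb{1})$ into $\pi\sb{1}\map\sb{\Trk{n}/\dZ\sb{0}}(\dZ\sb{0},j\sp{\ast}\widetilde{M})$ by applying the isomorphism \eqref{eqisom52} of \cite[Lemma 5.2]{BP2019} to the discrete groupoid on $Z\sb{0}$, where the structure maps $\hat{t}$ and $\hat{j}$ become identities. Your extra bookkeeping (admissibility of $\Trk{n-1}$, closure of $\Trk{n}$ under the relevant limits via Propositions \ref{pro02} and \ref{pro03}) is a harmless elaboration of points the paper leaves implicit.
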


\begin{proof}
In order to apply the isomorphism \wref{eqisom52} of \cite[Lemma 5.2]{BP2019}
to the middle term of \wref[,]{eqses} we must rewrite
\w{A:=\Hom\sb{\clC/Z\sb{0}}(Z\sb{0},j\sp{\ast}\widetilde{M}\sb{1})} in the form
of the first term of \wref{eqshortes} (which is the input needed for
\cite[Lemma 5.2]{BP2019}) \wh that is
we want to identify $A$ with an abelian group of the form
\w[,]{\Hom\sb{\clC/\widehat{\pi}\sb{0}}
  (\widehat{\pi}\sb{0},\hat{t}\sp{\ast}(\hat{j}\sp{\ast}\widehat{M})\sb{1})}
so necessarily \w[.]{\widehat{\pi}\sb{0}=Z\sb{0}}
Similarly, we must identify
\w[,]{B:=\pi\sb{1}\map\sb{\Gpd(\clC)/\dZ\sb{0}}(\dZ\sb{0},j\sp{\ast}M)}
the middle term of \wref[,]{eqnewses}
with \w[,]{\pi\sb{1}\map\sb{\Gpd(\clC)/\widehat{Z}}(\widehat{Z},\widehat{M})}
where \w[.]{\widehat{Z}=\dZ\sb{0}} We see that we must choose
\w{\widehat{M}=j\sp{\ast}\widetilde{M}} and
\w[.]{\widehat{Z}:=H(Z\sb{0}\leftrightarrows\widehat{\pi}\sb{0}):=Z\sb{0}}
Thus both \w{\hat{t}:\widehat{\pi}\sb{0}\to Z\sb{0}} and
\w{\hat{j}:\dd\widehat{Z}\sb{0}\to\widehat{Z}=\dZ\sb{0}} are identity maps, and
so \w[,]{\hat{t}\sp{\ast}(\hat{j}\sp{\ast}\widehat{M})\sb{1}=j\sp{\ast}\widetilde{M}}
as required to obtain the middle term of \wref{eqnewses} from \wref[.]{eqses}
\end{proof}

\begin{theorem}\label{the1-longseq}
For any \w{X\in\Trk{n}} and module $M$ over \w[,]{\p{1}X\in\Trk{}}
there is a long exact sequence of cohomology groups
\begin{myeq}\label{eqlongseq}
  \rw \HAQ{s}(\dI X;M)\rw \HAQ{s}(\dI X\sb{0};M)\rw
\HAlg{s}(X;M)\rw \HAQ{s-1}(\dI X;M)\cdots\;.
\end{myeq}
\end{theorem}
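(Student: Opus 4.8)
The plan is to obtain \eqref{eqlongseq} as the long exact cohomology sequence of a short exact sequence of cosimplicial abelian groups, produced by feeding the comonad resolution $\ckb X$ degreewise into Proposition \ref{pnewses}. First I would put each resolution term $\ck\sb{s} X = \ck\sp{s+1} X$ into the shape required by that proposition. Recall from \eqref{eq3-comores} and \eqref{eq6-comores} that $\ck = L\sb{n} FVU\sb{n}$ with $L\sb{n} = L\sb{[0]}\cdots L\sb{[n-1]}$; setting $Z\sp{(s)} := L\sb{[0]}\cdots L\sb{[n-2]}\,FVU\sb{n}(\ck\sp{s} X)\in\Trk{n-1}$, the computation in the proof of Lemma \ref{rem1-longseq} gives $\ck\sb{s} X = L\sb{[n-1]} Z\sp{(s)} = H(Z\sp{(s)}\cop Z\sp{(s)}\rightleftarrows Z\sp{(s)})$, i.e.\ $\ck\sb{s} X = H(Y\sb{s})$ with $Y\sb{s}\in\Spl(\Trk{n-1})$ and $(\ck\sb{s} X)\sb{0} = Z\sp{(s)}\cop Z\sp{(s)}$, whose associated category $\p{0}(\ck\sb{s} X)\sb{0}$ is free by Lemma \ref{rem1-longseq}. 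The $\p{1}X$-module $M$ pulls back along the iterated face map $d\sb{I}\colon\ck\sb{s} X\rw X$ to a module over $\p{1}\ck\sb{s} X$, and, via the identification $\Trk{n}\cong\Gpdwg{n}(\Cato)$ together with Lemma \ref{lem4-eilmacob} and Proposition \ref{pro1-eilmacob}, the Eilenberg--Mac~Lane object $\widetilde M\sb{s} := E\up{n}(\p{1}\ck\sb{s} X, d\sb{I}\sp{\ast} M)$ is a module over $\ck\sb{s} X$ obtained by pullback from $E\up{n}(\p{1}X, M)$.

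Next I would apply Proposition \ref{pnewses} with $\clC = \Trk{n-1}$, $Y = Y\sb{s}$, $Z = \ck\sb{s} X$ and $\widetilde M = \widetilde M\sb{s}$, producing for each $s\geq 0$ the short exact sequence \eqref{eqnewses}. Its three terms depend functorially on the pair $(\ck\sb{s} X, \widetilde M\sb{s})$, and the constructions of Propositions \ref{pses} and \ref{pnewses} (imported from \cite[\S 5]{BP2019}) are compatible with the faces and degeneracies of $\ckb X$; hence these sequences assemble into a short exact sequence $0\rw A\sp{\sbl}\rw B\sp{\sbl}\rw D\sp{\sbl}\rw 0$ of cosimplicial abelian groups, with $A\sp{s} = \pi\sb{1}\map\sb{\Trk{n}/\ck\sb{s} X}(\ck\sb{s} X, \widetilde M\sb{s})$, $B\sp{s} = \pi\sb{1}\map\sb{\Trk{n}/\dd(\ck\sb{s} X)\sb{0}}(\dd(\ck\sb{s} X)\sb{0}, j\sp{\ast}\widetilde M\sb{s})$, and $D\sp{s} = \Hom\sb{\Trk{n}/\ck\sb{s} X}(\ck\sb{s} X, \widetilde M\sb{s})$ exactly the cosimplicial abelian group of Proposition \ref{pro2-longseq}. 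Passing to cohomotopy $\pi\sp{\ast}$ then yields a long exact sequence relating $\pi\sp{s} A\sp{\sbl}$, $\pi\sp{s} B\sp{\sbl}$ and $\pi\sp{s} D\sp{\sbl}$.

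It remains to identify the three groups. By Definition \ref{def1-longseq}, $\pi\sp{s} D\sp{\sbl} = \HAlg{s}(X;M)$. Transporting $A\sp{\sbl}$ along the diagonal nerve $\dI\colon\Trk{n}\rw\SOC$, and using that $\widetilde M\sb{s}$ is the pullback of $E\up{n}(\p{1}X,M) = \EX{M}{n}$ along $d\sb{I}$, the slice adjunction identifies $A\sp{s}$ with $\pi\sb{1}\map\sb{\SOC/\dI X}(\dI\ck\sb{s} X, \EX{M}{n})$, which is the cosimplicial abelian group $C\sp{\sbl}$ of Proposition \ref{pro1-longseq} (whose proof rests on Corollary \ref{ccofrepl}), so $\pi\sp{s} A\sp{\sbl} = \HAQ{s}(\dI X;M)$. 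For the middle term one runs the same argument one dimension down: by the first paragraph and Lemma \ref{rem1-longseq} the object parts $(\ck\sb{\sbl} X)\sb{0}$ form a degreewise-free, homotopically discrete simplicial object, and the $(n-1)$-dimensional analogue of Proposition \ref{pro1-longseq}, applied to the $(n-1)$-track category $X\sb{0}$, identifies $\pi\sp{s} B\sp{\sbl}$ with $\HAQ{s}(\dI X\sb{0};M)$. Substituting these identifications into the long exact sequence of the previous paragraph gives \eqref{eqlongseq}.

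The step I expect to be the main obstacle is the identification $\pi\sp{s} B\sp{\sbl}\cong\HAQ{s}(\dI X\sb{0};M)$: one must verify both that $\dd(\ck\sb{\sbl} X)\sb{0}$ is genuinely a cofibrant resolution of $\dI X\sb{0}$ in the Dwyer--Kan model structure on $\SOC$, compatibly with the Eilenberg--Mac~Lane objects $j\sp{\ast}\widetilde M\sb{\sbl}$, and that one may replace the homotopically discrete $(\ck\sb{s} X)\sb{0}$ by the discrete category $\p{0}(\ck\sb{s} X)\sb{0}$ without altering the relevant $\pi\sb{1}$ of mapping spaces --- this is where Propositions \ref{pro01}, \ref{pro02} and \ref{pro03} and the weakly-globular bookkeeping of Section \ref{prelim} must be combined carefully. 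A subsidiary point requiring care is checking that the short exact sequences \eqref{eqnewses} are natural with respect to the full cosimplicial structure of $\ckb X$ (the degeneracies as well as the faces), so that the previous paragraph genuinely produces a short exact sequence of cosimplicial abelian groups rather than merely a degreewise one.
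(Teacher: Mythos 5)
Your proposal matches the paper's own proof essentially step for step: apply Proposition \ref{pnewses} to each $\clK\sb{s}X$ (cast via Lemma \ref{rem1-longseq} as $H(Y\sb{s})$ with a pulled-back module), assemble the resulting short exact sequences into a short exact sequence of cosimplicial abelian groups, take cohomotopy, and identify the three columns with $\HAQ{s}(\dI X;M)$, $\HAQ{s}(\dI X\sb{0};M)$ and $\HAlg{s}(X;M)$ via Proposition \ref{pro1-longseq}, its one-dimension-lower analogue, and Definition \ref{def1-longseq}. The paper is simply terser at exactly the two spots you flag as delicate (cosimplicial naturality of \eqref{eqnewses} and the middle-term identification), referring to \cite[Theorem 4.9 and Corollary 4.10]{BP2019} where you spell the argument out.
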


\begin{proof}
For each \w{s\geq 0}, apply Proposition \ref{pnewses} to \w{Z=\clK\sb{s} X} and
the module
$$
\widetilde{M}:=E\up{n}(\p{1}\clK\sb{s} X,\delta\sp{\ast}M)\in
[(\Trk{n},(\clK\sb{s} X)\sb{0})/\clK\sb{s} X]\sb{\ab}~,
$$
\noindent where \w{\delta:\clK\sb{s} X\to X} is the unique iterated face map
(followed by the augmentation.
We obtain a short exact sequence of abelian groups
\begin{equation*}
\begin{split}
  0 & \rw \pi\sb{1}\map\sb{\Sz{\Trk{n}}/\clK\sb{s} X}(\clK\sb{s}X,\
\EM{n}{\p{1}\clK\sb{s}X}{\delta\sp{\ast}M}) \rw \\
& \rw \pi\sb{1}\map((\clK\sb{s}X)\sb{0},\EM{n}{\p{1}\clK\sb{s}X}{\delta\sp{\ast}M})
\rw \\
& \to\Hom\sb{\Sz{\Trk{n}}/\clK\sb{s} X}
(\clK\sb{s} X,\EM{n}{\p{1}\clK\sb{s}X}{\delta\sp{\ast}M})\rw 0\;.
\end{split}
\end{equation*}

In turn, this induces a long exact cohomology sequence
\begin{equation*}
\begin{split}
  \cdots\rw & \pi\sp{s}
  (\pi\sb{1}\map\sb{\Sz{\Trk{n}}/\clK\sb{s} X}(\clK\sb{s} X,
  \EM{n}{\p{1}\clK\sb{s}X}{d\sp{\ast}\sb{i}M})\rw \\
\rw  & \pi\sp{s}(\pi\sb{1}\map((\clK\sb{s} X)\sb{0},
\EM{n}{\p{1}\clK\sb{s}X}{\delta\sp{\ast} M)})\rw \\
\rw  & \pi\sp{s}(\Hom\sb{\Sz{\Trk{n}}/\clK\sb{s} X}
(\clK\sb{s} X,\EM{n}{\p{1}\clK\sb{s}X}{\delta\sp{\ast}M})\to\cdots
\end{split}
\end{equation*}

Now by Definition \ref{def1-longseq}
$$
  \pi\sp{s}(\Hom\sb{\Sz{\Trk{n}}/\clK\sb{s} X}
  (\clK\sb{s}X,\EM{n}{\p{1}\clK\sb{s}X}{d\sp{\ast}\sb{i}M})=\HAlg{s}(X;M)~.
$$
Finally, as in \cite[Theorem 4.9 and Corollary 4.10]{BP2019}:
\begin{equation*}
\begin{split}
& \pi\sp{s} (\pi\sb{1}\map\sb{\Sz{\Trk{n}}/\clK\sb{s} X}
  (\clK\sb{s} X,\EM{n}{\p{1}\clK\sb{s}X}{\delta\sp{\ast}M})=
  \HAQ{s}(\dI X;M) \\
& \pi\sp{s}(\pi\sb{1}\map((\clK\sb{s} X)\sb{0},
\EM{n}{\p{1}\clK\sb{s}X}{\delta\sp{\ast}M}) = \HAQ{s}(\dI X\sb{0};M)~.
\end{split}
\end{equation*}
Hence the result follows.
\end{proof}

\begin{corollary}\label{cor1-longseq}
Given \w[,]{X\in\Trk{n}} a module $M$ over \w[,]{\p{1}X} and \w{S(X)} as in
  Lemma \ref{lem2-longseq}, for each \w{s>1} we have an isomorphism
\begin{myeq}\label{eqisocoh}
\HAQ{s+1}(\dI X;M)~\cong~\HAlg{s}(S(X);M)\;.
\end{myeq}
\end{corollary}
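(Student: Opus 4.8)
The plan is to apply the long exact sequence of Theorem~\ref{the1-longseq} to the $n$-track category $S(X)$ of Lemma~\ref{lem2-longseq} in place of $X$, the point being that $\p{0}(S(X))\sb{0}$ is a free category, which forces the ``$\HAQ{\ast}(\dI(-)\sb{0};M)$''-terms of that sequence to vanish in the relevant range. First I would record the preliminaries. Since $v\sb{X}\colon S(X)\rw X$ is an $n$-equivalence, $\p{1}v\sb{X}\colon\p{1}S(X)\rw\p{1}X$ is an equivalence of track categories, so the $\p{1}X$-module $M$ restricts along it to a $\p{1}S(X)$-module (this is the module implicit in $\HAlg{s}(S(X);M)$), and moreover $\dI v\sb{X}\colon\dI S(X)\rw\dI X$ is a Dwyer-Kan equivalence in $\SOC$, since $\dI=\diag\,\Ne{n}$ sends $n$-equivalences to weak equivalences of mapping spaces (cf.\ Theorem~\ref{paols}) and $v\sb{X}$ is the identity on objects. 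Because $\HAQ{s}(\dI X;M)$ is computed from a derived mapping space (see \S\ref{sdkcohntc}), it is a homotopy invariant of $\dI X$, so $\HAQ{t}(\dI S(X);M)\cong\HAQ{t}(\dI X;M)$ for every $t$.

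\textbf{The key vanishing.} The crux is the vanishing
\[
\HAQ{s}(\dI(S(X))\sb{0};M)=0\qquad\text{for all }s\geq 2.
\]
By the proof of Lemma~\ref{lem2-longseq}, $(S(X))\sb{0}$ is homotopically discrete and $\p{0}(S(X))\sb{0}\in\Cato$ is a free category. Hence, by Proposition~\ref{pro01}, the discretization map is an $(n-1)$-equivalence, so $\dI(S(X))\sb{0}$ is weakly equivalent in $\SOC$ to the constant $\SO$-category on the free category $\p{0}(S(X))\sb{0}$, whose classifying space is $1$-dimensional (a graph); the Dwyer-Kan $\SO$-cohomology of such an object reduces to the ordinary cohomology of the underlying category, which vanishes above degree~$1$.

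\textbf{Assembling the argument.} Finally I would feed these two facts into the long exact sequence of Theorem~\ref{the1-longseq} for $S(X)$,
\[
\cdots\rw\HAQ{s}(\dI(S(X))\sb{0};M)\rw\HAlg{s}(S(X);M)\xrw{\delta}\HAQ{s+1}(\dI S(X);M)\rw\HAQ{s+1}(\dI(S(X))\sb{0};M)\rw\cdots.
\]
For $s>1$ both flanking terms vanish (as $s\geq 2$ and $s+1\geq 2$), so the connecting homomorphism $\delta$ is an isomorphism $\HAlg{s}(S(X);M)\cong\HAQ{s+1}(\dI S(X);M)$; combined with $\HAQ{s+1}(\dI S(X);M)\cong\HAQ{s+1}(\dI X;M)$ from the first step, this yields the isomorphism \eqref{eqisocoh}.

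\textbf{Main obstacle.} I expect the real work to be the vanishing of the second step: one must pin down $\dI(S(X))\sb{0}$ precisely as an $\SO$-category, argue that homotopical discreteness together with freeness of $\p{0}(S(X))\sb{0}$ forces it to have the homotopy type of a $1$-dimensional complex, and then verify that the $\SO$-cohomology of \S\ref{sdkcohntc}, for such a target, agrees with ordinary category cohomology so that it indeed vanishes above degree~$1$. Everything else is weak-equivalence invariance of $\HAQ{\ast}$ and a diagram chase in the long exact sequence.
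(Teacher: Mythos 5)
Your proposal is correct and follows essentially the same route as the paper: apply the long exact sequence of Theorem \ref{the1-longseq} to \w[,]{S(X)} use homotopical discreteness of \w{(S(X))\sb{0}} to identify \w{\HAQ{s}(\dI (S(X))\sb{0};M)} with \w{\HAQ{s}(\p{0}(S(X))\sb{0};M)} and the freeness of \w{\p{0}(S(X))\sb{0}} to make it vanish, then read off the isomorphism (together with the invariance of \w{\HAQ{\ast}} under the $n$-equivalence \w[).]{v\sb{X}} The paper's proof is just a terser version of exactly this argument.
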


\begin{proof}
Since \w{S(X)\sb{0}} is homotopically discrete,
$$
\HAQ{s}(\dI S(X)\sb{0},M)~\cong~\HAQ{s}(\p{0}S(X)\sb{0},M)~.
$$
\noindent By construction of \w[,]{S(X)} \w{\p{0}S(X)\sb{0}} is a free category, so
\w[.]{\HAQ{s}(\p{0}(SX)\sb{0},M)=0}
Thus the long exact sequence of Theorem \ref{the1-longseq} applied to \w{S(X)}
yields \wref[,]{eqisocoh} as required.
\end{proof}

%
%        References
%

\end{document}